\title{A fractional Moser-Trudinger type inequality in one dimension and its critical points}
\author{Stefano Iula\thanks{The authors are supported by the Swiss National Science Foundation, project nr. PP00P2-144669.}\\ {\small Universit\"at Basel}\\ {\small \texttt{stefano.iula@unibas.ch} }\and Ali Maalaoui${}^*$\\ {\small A.U. Ras Al Khaimah}\\ {\small \texttt{ali.maalaoui@aurak.ae} }\and Luca Martinazzi${}^*$ \\ {\small Universit\"at Basel}\\ {\small \texttt{luca.martinazzi@unibas.ch} }}
\newtheorem{trm}{Theorem}[section]
\newtheorem{prop}[trm]{Proposition}
\newtheorem{lemma}[trm]{Lemma}
\newtheorem{rmk}{Remark}
\newtheorem{oq}{Open question}
\newcommand{\R}[1]{\mathbb{R}^{#1}}
\newcommand{\de}{\partial}
\newcommand{\ve}{\varepsilon}
\newcommand{\M}[1]{\mathcal{#1}}
\newenvironment{proof}{\noindent\emph{Proof.}}{\phantom{ } \hfill$\square$\medskip}
\DeclareMathOperator{\loc}{loc}
\begin{document}
\maketitle

\begin{abstract} We show a sharp fractional Moser-Trudinger type inequality in dimension $1$, i.e. for any interval $I\Subset\R{}$ and $p\in (1,\infty)$ there exists $\alpha_p>0$ such that
$$\sup_{u\in \tilde H^{\frac 1p,p}(I): \|(-\Delta)^\frac{1}{2p}u\|_{L^p(I)}\le 1} \int_I e^{\alpha_p |u|^\frac{p}{p-1}}dx = C_p|I|,$$
and $\alpha_p$ is optimal in the sense that
$$\sup_{u\in \tilde H^{\frac 1p,p}(I): \|(-\Delta)^\frac{1}{2p}u\|_{L^p(I)}\le 1} \int_I h(u) e^{\alpha_p |u|^\frac{p}{p-1}}dx = +\infty,$$
for any function $h:[0,\infty) \to [0,\infty)$ with $\lim_{t\to\infty}h(t)=\infty$.
Here $\tilde H^{\frac 1p,p}(I)=\{u\in L^p(\R{}): (-\Delta)^\frac{1}{2p}u\in L^p(\R{}), \mathrm{supp}(u)\subset \bar I\}$.

Restricting ourselves to the case $p=2$ we further consider for $\lambda>0$ the functional
$$J(u):=\frac{1}{2}\int_{\R{}}|(-\Delta)^\frac14 u|^2 dx- \lambda\int_I \left(e^{\frac12 u^2}-1\right)dx,\quad u\in \tilde H^{\frac{1}{2},2}(I),$$
and prove that it satisfies the Palais-Smale condition at any level $c\in (-\infty,\pi)$.
We use these results to show that the equation
$$(-\Delta)^\frac12 u =\lambda u e^{\frac{1}{2}u^2}\quad \text{in }I$$
has a positive solution in $\tilde H^{\frac12,2}(I)$ if and only if $\lambda\in (0,\lambda_1(I))$, where $\lambda_1(I)$ is the first eigenvalue of $(-\Delta)^\frac12$ on $I$. This extends to the fractional case some previous results proven by Adimurthi for the Laplacian and the $p$-Laplacian operators.

Finally with a technique by Ruf we show a fractional Moser-Trudinger inequality on $\R{}$.

\noindent{ \bf  MSC 2010.}  26A33, 35R11, 35B33.
\end{abstract}

\section{Introduction}
According to a celebrated result in analysis, if $\Omega\subset\R{n}$ is an open set with finite measure $|\Omega|$, $k$ is a positive integer smaller than $n$, and $p\in [1,\frac{n}{k})$, then the Sobolev space $W^{k,p}_0(\Omega)$ embeds continuously into $L^{\frac{np}{n-kp}}(\Omega)$. It is also known that in the borderline case $p=\frac{n}{k}$ one has $W^{k,\frac{n}{k}}_0(\Omega)\not\subset L^\infty(\Omega)$. On the other hand, as shown by Pohozaev \cite{poh}, Trudinger \cite{tru}, Yudovich \cite{yud} and others, for the case $k=1$ one has
$$W^{1,n}_0(\Omega)\subset \left\{u\in L^1(\Omega):E_\beta(u):=\int_\Omega e^{\beta |u|^{\frac{n}{n-1}}}dx<\infty  \right\},\quad \text{for any }\beta <\infty,$$
and the functional $E_\beta$ is continuous on $W^{1,n}_0(\Omega)$.
This embedding was complemented with a sharp inequality by Moser \cite{mos}, the so-called Moser-Trudinger inequality: 
\begin{equation}\label{stimaMT0}
\sup_{u\in W^{1,n}_0(\Omega),\;\|\nabla u\|_{L^n(\Omega)}\le 1} \int_\Omega e^{\alpha_n |u|^\frac{n}{n-1}}dx \le C|\Omega|,\quad \alpha_n:=n\omega_{n-1}^\frac{1}{n-1},
\end{equation}
where $\omega_{n-1}$ is the volume of the unit sphere in $\R{n}$, and the constant $\alpha_n$ is sharp.

Several generalizations and applications of the Moser-Trudinger inequality have appeared in the course of the last decades. In this work we investigate a fractional version of \eqref{stimaMT0}, and in particular its sharpness, restricting our attention to dimension $1$. Moreover we will consider a functional associated to it and discuss its critical points.

\medskip

In order to state the main results of the paper we first introduce some  relevant function spaces.
Let $s\in (0,1)$. We consider the space of functions $L_{s}(\mathbb{R})$ defined by
\begin{equation}
L_{s}(\mathbb{R})=\left\{ u\in L^{1}_{\loc}(\mathbb{R}): \int_{\mathbb{R}}\frac{|u(x)|}{1+|x|^{1+2s}}dx <\infty \right\}.
\end{equation}
For a function $u\in L_{s}(\mathbb{R})$  we define $(-\Delta)^{s}u$ as a tempered distribution as follows:
\begin{equation}\label{deffrlap}
\langle (-\Delta)^{s}u,\varphi \rangle := \int_{\R{}} u(-\Delta)^{s}\varphi dx, \quad \varphi\in \M{S},
\end{equation}
where $\M{S}$ denotes the Schwartz space of rapidly decreasing smooth functions and for $\varphi\in\M{S}$ we set
$$(-\Delta)^s\varphi:=\mathcal{F}^{-1}(|\cdot |^{2s}\hat{\varphi}).$$ Here the Fourier transform is defined by
$$ \hat \varphi(\xi)\equiv \mathcal{F} \varphi(\xi):=\frac{1}{\sqrt{2\pi}}\int_{\R{}}e^{-ix\xi}\varphi(x)\, dx.$$
Notice that the convergence of the integral in \eqref{deffrlap} follows from the fact that for $\varphi\in \M{S}$ one has
$$|(-\Delta)^s\varphi(x)| \le C(1+|x|^{1+2s})^{-1}.$$

\medskip

For $s\in (0,1)$ and $p\in [1,\infty]$ we define the Bessel-potential space
\begin{equation}\label{BPS}
H^{s,p}(\R{}):=\left\{u\in L^p(\R{}) \colon (-\Delta)^{\frac s2}u \in L^p(\R{})\right\},
\end{equation}
and its subspace 
\begin{equation}\label{BPSS}
\tilde H^{s,p}(I):=\{u\in L^p(\R{}): u\equiv 0\text{ in }\R{}\setminus I,\;(-\Delta)^{\frac s2} u\in L^p(\R{})\},
\end{equation}
where $I\Subset\R{}$ is a bounded interval. Both spaces are endowed with the norm

\begin{equation}\label{Hspnorm}
\|u \|^p_{H^{s,p}(\R{})}:=\|u\|^p_{L^p(\R{})}+ \|(-\Delta)^{\frac s2}u\|^p_{L^p(\R{})}.
\end{equation}

\subsection{A fractional Moser-Trudinger type inequality}

The first result that we shall prove is a fractional Moser-Trudinger type inequality:

\begin{trm}\label{MT2}
For any $p\in(1,+\infty)$ set $p'=\tfrac{p}{p-1}$ and
\begin{equation}\label{alphap}
\alpha_p:=\frac{1}{2}\left[2\cos\left(\frac{\pi}{2p}\right)\Gamma\left(\frac{1}{p}\right)\right]^{p'},\quad \Gamma(z):=\int_0^{+\infty} t^{z-1}e^{-t}\, dt.
\end{equation}
Then for any interval $I \Subset \R{}$ and $\alpha\leq \alpha_p$ we have

\begin{equation}\label{stimaMT}
\sup_{u\in \tilde H^{\frac1p,p}(I), \;\|(-\Delta)^{\frac{1}{2p}}u\|_{L^{p}(I)}\leq 1}\int_{I}\left(e^{\alpha |u|^{p'}}-1\right)dx = C_p|I|,
\end{equation}
and $\alpha=\alpha_p$ is the largest constant for which \eqref{stimaMT} holds. In fact for any function $h:[0,\infty) \to [0,\infty)$ with 
\begin{equation}\label{condh}
\lim_{t\to\infty} h(t)=\infty
\end{equation}
we have
\begin{equation}\label{stimaMT2s}
\sup_{u\in \tilde H^{\frac1p,p}(I),\; \|(-\Delta)^\frac{1}{2p} u\|_{L^p(I)}\leq 1} \int_{I}h(u) \left(e^{\alpha_p |u|^{p'}} -1\right) dx =\infty.
\end{equation}

\end{trm}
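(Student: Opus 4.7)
\medskip
\noindent\textbf{Proof plan.} The approach rests on the Riesz potential representation
$$u(x)=\frac{1}{\gamma(1/p)}\int_{\R{}}\frac{f(y)}{|x-y|^{1-1/p}}\,dy,\qquad f:=(-\Delta)^{1/(2p)}u,\qquad \gamma(1/p):=2\cos\!\bigl(\tfrac{\pi}{2p}\bigr)\Gamma\!\bigl(\tfrac{1}{p}\bigr),$$
which makes transparent the identity $\alpha_p=\tfrac12\gamma(1/p)^{p'}$. With this in hand, \eqref{stimaMT} reduces to a borderline Adams-type inequality for the one-dimensional Riesz potential $I_{1/p}$ acting on $f\in L^p(\R{})$ with $\mathrm{supp}(f)\subset\bar I$ and $\|f\|_p\le 1$, and sharpness will follow from a single concentrating Moser sequence.

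For the upper bound in \eqref{stimaMT} I would proceed in the standard manner: a symmetric decreasing rearrangement reduces to nonnegative, even, decreasing data on a symmetric interval, after which one splits the Riesz integral according to the size of $|x-y|$ (\`a la Adams and Garsia) and uses the borderline identity $\int_\varepsilon^{|I|}r^{-1}dr=\log(|I|/\varepsilon)$ together with an expansion of the exponential to bound $\int_I e^{\alpha_p|u|^{p'}}dx\leq C_p|I|$. The factor $\tfrac12$ in $\alpha_p$ is dictated by the fact that $x\in I$ sees the singular kernel $|x-y|^{-1/p'}$ on both sides of $x$.

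For the sharpness of $\alpha_p$ and the non-improvability \eqref{stimaMT2s}, I would construct a single concentrating Moser sequence. Fix $x_0\in\mathrm{int}(I)$ and $R>0$ with $[x_0-R,x_0+R]\subset I$, and set $\tilde u_k:=I_{1/p}\tilde f_k$ with
$$\tilde f_k(y):=(2\log k)^{-1/p}\,|y-x_0|^{-1/p}\,\chi_{\{R/k\le|y-x_0|\le R\}}(y),$$
so that $\|\tilde f_k\|_p=1$ and a direct calculation using the explicit kernel gives $\tilde u_k(x_0)=(\alpha_p^{-1}\log k)^{1/p'}$; moreover $\tilde u_k$ is essentially constant on $B_{R/k}(x_0)$. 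Letting $u_k:=\eta\tilde u_k/\|(-\Delta)^{1/(2p)}(\eta\tilde u_k)\|_p$ for a fixed cutoff $\eta\in C_c^\infty(I)$ equal to $1$ near $x_0$ places $u_k$ in $\tilde H^{1/p,p}(I)$ with unit seminorm. One then estimates
$$\int_I h(u_k)\bigl(e^{\alpha_p|u_k|^{p'}}-1\bigr)\,dx\ge c\,h\!\Bigl(\min_{B_{R/k}(x_0)}u_k\Bigr)\cdot\frac{1}{k}\cdot\bigl(e^{\log k-O(1)}-1\bigr)\ge c'\,h\!\left(\tfrac12(\alpha_p^{-1}\log k)^{1/p'}\right),$$
which diverges under \eqref{condh}; the analogous estimate with $h\equiv 1$ and $\alpha>\alpha_p$ in place of $\alpha_p$ blows up as $k^{(\alpha-\alpha_p)/\alpha_p}$, showing that $\alpha_p$ is the largest admissible constant.

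The main technical obstacle is justifying $\|(-\Delta)^{1/(2p)}(\eta\tilde u_k)\|_p=1+o(1)$: since $\tilde u_k$ is not supported in $\bar I$, one must control the commutator $[\eta,(-\Delta)^{1/(2p)}]\tilde u_k$ together with the non-local tail and show both are negligible compared with the $(\log k)^{1/p}$-sized bulk. This is a routine but delicate non-local estimate using the singular integral representation of $(-\Delta)^{1/(2p)}$ and the fact that $\tilde u_k$ is uniformly bounded away from $x_0$ with a slowly decaying tail; once it is carried out, everything else reduces to the explicit computations outlined above.
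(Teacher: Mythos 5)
Your proposal has two genuine gaps, and they are exactly the ones the paper flags as the reason its proof takes the route it does.

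For the upper bound in \eqref{stimaMT} you write $u=\gamma(1/p)^{-1}I_{1/p}*f$ with $f:=(-\Delta)^{1/(2p)}u$ and then say the problem ``reduces to a borderline Adams-type inequality \ldots acting on $f\in L^p(\R{})$ with $\mathrm{supp}(f)\subset\bar I$ and $\|f\|_p\le 1$.'' Neither hypothesis holds: when $u\in\tilde H^{1/p,p}(I)$ is supported in $\bar I$, the function $(-\Delta)^{1/(2p)}u$ is in general \emph{not} supported in $\bar I$ (the fractional Laplacian is non-local), and the constraint in the theorem is only $\|(-\Delta)^{1/(2p)}u\|_{L^p(I)}\le 1$, a bound on the restriction to $I$, not on $\|f\|_{L^p(\R{})}$. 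Adams's theorem (what you invoke) requires precisely the compact support of the datum. The paper circumvents this by using the Green function $G_s$ of $(-\Delta)^s$ on $I$ with exterior Dirichlet condition: it proves $u(x)=\int_I G_s(x,y)(-\Delta)^s u(y)\,dy$, which integrates only over $I$, together with the pointwise bound $0<G_s(x,y)\le F_s(x-y)=c_p^{-1}I_{1/p}(x-y)$ from the maximum principle. With that in hand Adams applies to $G_s*((-\Delta)^su|_I)$, and both of the above obstacles disappear.

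For sharpness your plan is the standard one: $\tilde u_k=I_{1/p}\tilde f_k$ for a concentrating $\tilde f_k$, then a fixed cutoff $\eta$, then normalize, then ``control the commutator'' — which you call routine. The paper's Remark \ref{rmk1} says explicitly that this cutoff route (as carried out in \cite{mar2}) is \emph{not} strong enough: the commutator/cut-off error in $\|(-\Delta)^{1/(2p)}(\eta\tilde u_k)\|_p$ cannot be made small enough in the relevant scale, and the resulting normalization loss only yields \eqref{stimaMT2s} under the stronger hypothesis $\lim_{t\to\infty}t^{-p'}h(t)=\infty$, not the mere $\lim_{t\to\infty}h(t)=\infty$ claimed in the theorem. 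The paper avoids this entirely: instead of truncating the Riesz potential, it \emph{solves} the Dirichlet problem $(-\Delta)^su=f_\tau$ in $I$, $u\equiv 0$ in $I^c$, so that no cutoff error is ever introduced, and the cost is shifted to showing that the resulting $u$ actually lies in $\tilde H^{1/p,p}(I)$, i.e.\ that $(-\Delta)^su\in L^p(\R{})$. This is non-trivial (since $(-\Delta)^su$ could a priori concentrate on $\partial I$) and is handled by a Schwartz-type argument ruling out Dirac masses and their derivatives at $\partial I$. So the obstacle you acknowledge as ``routine but delicate'' is, in this precise setting, the reason your construction loses the borderline sharpness; the step you would need is not a commutator estimate but a different construction of the extremal sequence.
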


\begin{rmk}\label{rm1}
Notice that in \eqref{stimaMT}, instead of the standard $H^{{\frac 1p},p}$-norm defined in \eqref{Hspnorm}, we are using the smaller  norm $\|u\|^{*}:= \|(-\Delta)^{\frac{1}{2p}}u\|_{L^p(I)}$. This turns out to be equivalent to the full norm $\|u\|_{H^{\frac 1p,p}(\R{})}$ on $\tilde H^{\frac 1p,p}(I)$. This fact does not appear to be obvious, but one can prove it as follows. By Theorem $7.1$ in \cite{Gr0} the operator $T:u\mapsto ((-\Delta)^\frac{1}{2p} u)|_I$ is Fredholm from $\tilde H^{\frac 1p,p}(I)$ ($=H^{\frac{1}{2p}(\frac 1p)}_p(\bar I)$ in the notation of \cite{Gr0}) into $L^p(I)$. Moreover $T$ is injective by Lemma \ref{lemmagreen3} below. This implies that
$$\|u\|_{H^{\frac 1p,p}(\R{})}\le C \|Tu \|_{L^p(I)}= C \|u\|^{*}, \quad \text{for every }u\in \tilde H^{\frac 1p,p}(I).$$
\end{rmk}

Recently A. Iannizzotto and M. Squassina \cite[Cor. 2.4]{IS} proved a subcritical version of \eqref{stimaMT} in Theorem \ref{MT2} in the case $p=2$, namely
$$\sup_{u\in \tilde H^{\frac12,2}(I)\,:\, \|(-\Delta)^\frac14 u\|_{L^2(\R{})}\le 1}\int_I e^{\alpha u^2}dx \le C_\alpha|I|,\quad \text{for }\alpha<\pi.$$

\subsection{Palais-Smale condition and critical points}

In the rest of this paper we will focus on the case $p=2$, and denote 
\begin{equation}\label{defH}
H:=\tilde{H}^{\frac{1}{2},2}(I),\quad \|u\|_{H}:= \|(-\Delta)^{\frac 14}u\|_{L^2(\R{})}. 
\end{equation}
By Remark \ref{rm1} also this norm is equivalent to the full $H^{\frac12,2}$-norm on $\tilde{H}^{\frac{1}{2},2}(I)$. This also follows from the following Poincar\'e-type inequality (see e.g. \cite[Lemma 6]{SV}):
\begin{equation}\label{poinc}
\|u\|_{L^2(I)}^2\le \frac{1}{\lambda_1(I)} \|(-\Delta)^\frac14 u\|_{L^2(\R{})}^2\quad \text{for }u\in   \tilde{H}^{\frac{1}{2},2}(I),
\end{equation}
where $\lambda_1>0$ is the first eigenvalue of $(-\Delta)^\frac12$ on $\tilde{H}^{\frac{1}{2},2}(I)$, see Lemma \ref{phi1} in the appendix.

We now investigate the existence of critical points of functionals associated to inequality \eqref{stimaMT} in the case $p=2$. Since we often integrate by parts and $(-\Delta)^s u$ is not in general supported in $I$ even if $u\in C^\infty_c(I)$, it is more natural to consider the slightly weaker inequality 
\begin{equation}\label{stimaMT2}
\sup_{u\in H, \;\|u\|_H^2 \leq 2\pi}\int_{I}\left(e^{\frac12 u^2}-1\right)dx = C|I|,
\end{equation}
where we use the slightly different norm given in \eqref{defH}. The reason for using the constant $\frac12$ instead of $\alpha_2=\pi$ in the exponential and having $\|u\|^2_H\le 2\pi$ instead of $\|u\|_H^2\le 1$ is mostly cosmetic, and becomes more apparent when studying the blow-up behaviour of critical points of functionals associated to \eqref{stimaMT2} (see \eqref{defJ} below,  and compare to \cite{MMS} and \cite{mar1}).

\medskip

We want to investigate the existence of solutions of the non-local equation
\begin{equation}\label{eq0}
(-\Delta)^\frac12 u=\lambda ue^{\frac{1}{2}u^2}\quad \text{in }I, \quad u\equiv 0 \text{ in }\R{}\setminus I,
\end{equation}
which is the equation satisfied by critical points of the functional $E:M_\Lambda\to\R{}$, where
$$E(u)=\int_{I}\left(e^{\frac12 u^2}-1\right)dx, \quad M_\Lambda:=\{u\in H:\|u\|_H^2=\Lambda\},$$
$\Lambda>0$ is given, $\lambda$ is a Lagrange multiplier and $E$ is well defined on $M_\Lambda$ thanks to Lemma \ref{Lp} below. Since with this variational interpretation of \eqref{eq0} it is not possible to prescribe $\lambda$, we will follow the approach of Adimurthi and see solutions of \eqref{eq0} as critical points of the functional
\begin{equation}\label{defJ}
J:H\to \R{},\quad J(u)=\frac{1}{2} \|u\|_{H}^2 -\lambda \int_{I}\left(e^{\frac{1}{2}u^{2}}-1\right) dx.
\end{equation}
Again $J$ is well-defined on $H$ by Lemma \ref{Lp}. Moreover it is differentiable by Lemma \ref{lemmaC1} below, and its derivative is given by
$$\langle J'(u),v\rangle:=\frac{d}{dt}J(u+tv)\bigg|_{t=0} = (u,v)_H -\lambda\int_I uv e^{\frac{1}{2}u^2} dx,  $$
for any $u,v\in H$, where
$$(u,v)_H:= \int_{\R{}} (-\Delta)^{\frac 14}u(-\Delta)^{\frac 14}v\, dx.$$
In particular we have that if $u\in H$ and $J'(u)=0$, then $u$ is a weak solution of Problem \eqref{eq0} in the sense that
\begin{equation}\label{eq1}
(u,v)_H=\lambda\int_I uv e^{\frac{1}{2}u^2}dx, \quad \text{for all }v\in H.
\end{equation}
That this Hilbert-space definition of \eqref{eq0} is equivalent to the definition in sense of tempered distributions given by \eqref{deffrlap} is discussed in the introduction of \cite{MMS}.

To find critical points of $J$ we will follow a method of Nehari, as done by Adimurthi \cite{A}. An important point will be to understand whether $J$ satisfies the Palais-Smale condition or not.
We will prove the following:

\begin{prop}\label{trmex}
The functional $J$ satisfies the Palais-Smale condition at any level $c \in (-\infty, \pi )$, i.e.  any sequence $(u_k)$ with
\begin{equation}\label{condPS}
J(u_k)\to c\in (-\infty,\pi),\quad \|J'(u_k)\|_{H'}\to 0\quad\text{as }k\to\infty
\end{equation}
admits a subsequence strongly converging in $H$.
\end{prop}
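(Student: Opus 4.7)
The plan is to follow the classical strategy of Adimurthi \cite{A}, adapted to the fractional setting.

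\medskip

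\textbf{Step 1 (Boundedness of $(u_k)$).} Starting from the identity
\[
2 J(u_k) - \langle J'(u_k), u_k\rangle = \lambda \int_I \bigl[(u_k^2-2)e^{u_k^2/2}+2\bigr]\,dx,
\]
one checks that the integrand on the right-hand side is pointwise non-negative ($f(t):=(t-2)e^{t/2}+2$ satisfies $f(0)=0$ and $f'(t)=(t/2)e^{t/2}\ge 0$), while the left-hand side is bounded by $2c+1+o(1)\|u_k\|_H$ under \eqref{condPS}. A splitting into $\{u_k^2\le 4\}$ and $\{u_k^2>4\}$ yields $\int_I u_k^2 e^{u_k^2/2}\,dx\le C(1+\|u_k\|_H)$ and hence $\int_I(e^{u_k^2/2}-1)\,dx\le C(1+\|u_k\|_H)$; inserting in $\|u_k\|_H^2 = 2J(u_k)+2\lambda\int_I(e^{u_k^2/2}-1)\,dx$ gives the uniform $H$-bound.

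\medskip

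\textbf{Step 2 (Weak limit is a critical point).} Extract a subsequence with $u_k\rightharpoonup u$ in $H$, $u_k\to u$ a.e., and $u_k\to u$ in $L^q(I)$ for every $q<\infty$. From $\int_I u_k^2 e^{u_k^2/2}\,dx\le C$, the families $\{e^{u_k^2/2}\}$ and $\{u_k e^{u_k^2/2}\}$ are uniformly integrable, since for any measurable $A\subset I$ and $M>0$,
\[
\int_A e^{u_k^2/2}\,dx \le e^{M^2/2}|A| + \frac{1}{M^2}\int_I u_k^2 e^{u_k^2/2}\,dx \le e^{M^2/2}|A| + \frac{C}{M^2},
\]
and an analogous estimate holds for $u_k e^{u_k^2/2}$. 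Vitali's theorem then gives $e^{u_k^2/2}\to e^{u^2/2}$ and $u_k e^{u_k^2/2}\to u e^{u^2/2}$ in $L^1(I)$. Passing to the limit in $\langle J'(u_k),v\rangle\to 0$ against $v\in C_c^\infty(I)$ (bounded), and invoking the density of $C_c^\infty(I)$ in $H$, we conclude that $u$ satisfies \eqref{eq1}.

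\medskip

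\textbf{Step 3 (Strong convergence).} Combining Step 2 with $J(u_k)\to c$,
\[
\lim_k\|u_k\|_H^2 = 2c + 2\lambda\int_I(e^{u^2/2}-1)\,dx,
\]
and the critical-point identity $\|u\|_H^2 = 2J(u)+2\lambda\int_I(e^{u^2/2}-1)\,dx$ yields
\[
L^2 := \lim_k\|u_k\|_H^2 - \|u\|_H^2 = 2(c-J(u)).
\]
Since $J(u) = \lambda\int_I[(u^2/2-1)e^{u^2/2}+1]\,dx\ge 0$ (from the pointwise inequality $(t/2-1)e^{t/2}+1\ge 0$ on $[0,\infty)$, itself a consequence of $\frac{d}{dt}\{(t/2-1)e^{t/2}+1\}=(t/4)e^{t/2}\ge 0$), we conclude $L^2\le 2c<2\pi$.

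Set $w_k := u_k-u$, so $w_k\rightharpoonup 0$ in $H$, $w_k\to 0$ in $L^q(I)$ for all $q<\infty$, and $\|w_k\|_H^2\to L^2<2\pi$. Applying \eqref{stimaMT2} to an appropriate rescaling of $w_k$ (with norm $\le\sqrt{2\pi}$) provides, for some $r>1$ and $k$ large,
\[
\int_I e^{r w_k^2/2}\,dx \le C.
\]
This higher integrability on $w_k$, combined with the pointwise splitting $u_k^2\le (1+\eta)w_k^2+(1+1/\eta)u^2$, H\"older's inequality, and the strong $L^q$-convergence of $w_k$, can be used to show that $\int_I u_k w_k e^{u_k^2/2}\,dx\to 0$. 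Testing the PS relation against $w_k$,
\[
o(1) = \langle J'(u_k),w_k\rangle = \|w_k\|_H^2 + (u,w_k)_H - \lambda\int_I u_k w_k e^{u_k^2/2}\,dx,
\]
and using $(u,w_k)_H\to 0$, this forces $\|w_k\|_H\to 0$, i.e., strong convergence in $H$.

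\medskip

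\textbf{Main obstacle.} The delicate point is the last part of Step 3: killing concentration of $w_k$. The strict inequality $c<\pi$ enters precisely through the bound $L^2\le 2c<2\pi$, which is exactly what unlocks the subcritical Moser-Trudinger estimate on $w_k$ and rules out any concentrating Moser-type bubble. The threshold $c=\pi$ is sharp: at that level a bubble carrying exactly $2\pi$ of $H$-energy could form, and the Palais-Smale condition would fail.
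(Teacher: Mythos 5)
Your argument is correct, and it reaches the conclusion by a somewhat different implementation than the paper's. The paper proceeds through two auxiliary lemmas: a fractional Lions-type lemma (Lemma \ref{lemmaPLL}), applied to the normalized sequence $u_k/\|u_k\|_H$, and an Adimurthi-type compactness lemma (Lemma \ref{comp}), whose conclusion is the convergence $\int_I u_k^2e^{u_k^2/2}\,dx\to\int_I \tilde u^2e^{\tilde u^2/2}\,dx$; strong convergence then follows from a chain of (in)equalities for the norms, after separately showing $\tilde u\not\equiv 0$ when $c>0$ and treating $c\le 0$ and $c=0$ as special cases. You instead apply the Moser--Trudinger inequality \eqref{stimaMT2} directly to $w_k=u_k-u$, using $\lim_k\|w_k\|_H^2=2(c-J(u))\le 2c<2\pi$ (which is the same quantitative mechanism, $c-J(u)<\pi$, that drives the paper's choice of $\varepsilon$ in \eqref{1+eps}), and conclude by testing $J'(u_k)$ against $w_k$. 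Your route is more self-contained for this proposition (no Lions lemma needed) and handles $c\le 0$ and $c=0$ without case distinctions, since $L^2=2(c-J(u))\ge 0$ and $J(u)\ge 0$ automatically force $c\ge 0$ and give $L^2=0$ when $c=0$. What the paper's formulation buys is reusability: Lemma \ref{comp} is stated for sequences that are not full Palais--Smale sequences (only bounded, with hypotheses 1--4), and it is invoked again in the proof of Theorem \ref{ex}, where the Nehari minimizing sequence satisfies only $\langle J'(u_k),u_k\rangle=0$. Two small points you should make explicit: extending \eqref{eq1} from $v\in C_c^\infty(I)$ to all $v\in H$ (in particular $v=u$, which is what gives $J(u)=Q(u)\ge0$) uses $ue^{u^2/2}\in L^2(I)$, i.e.\ Lemma \ref{Lp}; and in the final H\"older step one must choose $\eta$ and the exponent on $e^{(1+\eta)w_k^2/2}$ so that their product times $\lim_k\|w_k\|_H^2$ stays strictly below $2\pi$, which is possible precisely because $L^2<2\pi$ — with these choices spelled out, your sketched limit $\int_I u_kw_ke^{u_k^2/2}\,dx\to0$ goes through.
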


\begin{trm}\label{ex}
Let $I\subset \R{}$ be a bounded interval and $\lambda_1(I)$ denote the first eigenvalue of $(-\Delta)^\frac12$ on $H=\tilde H^{\frac12,2}(I)$. Then for every $\lambda\in (0,\lambda_{1}(I))$ Problem \eqref{eq0} has at least one \emph{positive} solution $u\in H$ in the sense of \eqref{eq1}. 
When $\lambda\ge \lambda_1(I)$ or $\lambda\le 0$ Problem \eqref{eq0} has no non-trivial non-negative solutions.
\end{trm}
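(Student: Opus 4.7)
The plan is to split the theorem into non-existence (for $\lambda\le 0$ and $\lambda\ge\lambda_1(I)$) and existence (for $\lambda\in(0,\lambda_1(I))$).

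Non-existence is direct from the weak formulation \eqref{eq1}. For $\lambda\le 0$, testing $v=u$ in \eqref{eq1} gives $\|u\|_H^2=\lambda\int_I u^2 e^{u^2/2}\,dx\le 0$, forcing $u\equiv 0$. For $\lambda\ge\lambda_1(I)$, I would assume $u\in H$ is a nontrivial non-negative solution and test \eqref{eq1} against the positive first eigenfunction $\varphi_1$. Combining $(u,\varphi_1)_H=\lambda_1\int_I u\varphi_1\,dx$ (which follows from $(-\Delta)^{1/2}\varphi_1=\lambda_1\varphi_1$) with $(u,\varphi_1)_H=\lambda\int_I u\varphi_1 e^{u^2/2}\,dx$, one gets $\lambda_1\int_I u\varphi_1\,dx=\lambda\int_I u\varphi_1 e^{u^2/2}\,dx$. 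Since $\varphi_1>0$ in $I$, $u\not\equiv 0$, and $e^{u^2/2}>1$ strictly on the positive-measure set $\{u>0\}$, this forces $\lambda<\lambda_1$, contradicting the assumption.

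For existence on $(0,\lambda_1(I))$ I would use Nehari's approach on the truncated functional $\tilde J(u):=\frac{1}{2}\|u\|_H^2-\lambda\int_I(e^{(u^+)^2/2}-1)\,dx$, whose critical points satisfy $(-\Delta)^{1/2}u=\lambda u^+ e^{(u^+)^2/2}\ge 0$ in $I$; this will make the positivity step automatic via the fractional maximum principle. Define the Nehari set $N_\lambda:=\{u\in H\setminus\{0\}:\langle\tilde J'(u),u\rangle=0\}$. The Poincar\'e inequality \eqref{poinc} together with $\lambda<\lambda_1$ ensures that the derivative of $t\mapsto\tilde J(tu)$ is positive near $t=0$ whenever $u^+\not\equiv 0$, while strict monotonicity of $t\mapsto\int (u^+)^2 e^{t^2(u^+)^2/2}\,dx$ yields a unique maximizer $t_u>0$ with $t_u u\in N_\lambda$. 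Setting $c_\lambda:=\inf_{N_\lambda}\tilde J$ and rewriting $\tilde J$ on $N_\lambda$ shows $c_\lambda\ge 0$, and a standard subcritical Moser-Trudinger estimate applied to the Nehari identity provides a uniform lower bound $\|u\|_H\ge c>0$ on $N_\lambda$.

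The crucial step, and the main obstacle of the proof, is to show $c_\lambda<\pi$ so that Proposition \ref{trmex} applies. For $\lambda$ close to $\lambda_1$ the test function $u=\varphi_1$ suffices, because then $t_u\to 0$ and $\tilde J(t_u\varphi_1)\to 0$. For general $\lambda\in(0,\lambda_1)$ I would employ a Moser-type concentrating family $\{M_\epsilon\}$ adapted to the fractional norm $\|\cdot\|_H$ and normalized at the critical MT scale, verifying by direct estimate that $\sup_{t\ge 0}\tilde J(tM_\epsilon)<\pi$ for $\epsilon$ small; the key mechanism is that on such a family the exponential nonlinearity, concentrated on a vanishing set where $M_\epsilon$ is large, produces a quantitative defect strictly lowering the supremum below the threshold $\pi$. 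Granted $c_\lambda\in[0,\pi)$, Ekeland's variational principle on $\tilde J|_{N_\lambda}$ yields a Palais-Smale sequence at level $c_\lambda$; by Proposition \ref{trmex} (whose proof transfers verbatim to $\tilde J$) it converges strongly in $H$ to a critical point $u_\infty\in H$, nontrivial thanks to the lower bound on $N_\lambda$. Finally, $(-\Delta)^{1/2}u_\infty=\lambda u_\infty^+ e^{(u_\infty^+)^2/2}\ge 0$ in $I$, combined with $u_\infty\equiv 0$ on $\R{}\setminus I$, gives $u_\infty\ge 0$ in $I$ by the fractional weak maximum principle, hence $u_\infty^+=u_\infty$, and then $u_\infty>0$ in $I$ by the strong maximum principle, so $u_\infty$ is the sought positive solution of \eqref{eq1}.
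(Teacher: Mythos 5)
Your non-existence argument and the overall Nehari scheme match the paper's strategy (the paper minimizes $J$ directly on the Nehari manifold, restores non-negativity by replacing $u_k$ with $t_k|u_k|$ via \eqref{|u|} and Lemma \ref{nihari}, and gets criticality of the minimizer from the elementary deformation Lemma \ref{lemmacrit} together with the compactness Lemma \ref{comp}, rather than truncating the nonlinearity and running Ekeland plus Proposition \ref{trmex}; these variations of yours are legitimate). The problem is the step you yourself identify as crucial, namely $c_\lambda<\pi$: you dispose of it by invoking ``a Moser-type concentrating family adapted to the fractional norm, verified by direct estimate,'' and this is exactly where the real difficulty of the nonlocal problem sits. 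The paper does not estimate $\sup_t J(tM_\epsilon)$ at all; it derives $a(J)<\pi$ (Lemma \ref{lemmaaJ2}) from the Nehari identity, which yields $\sup_{\|w\|_H=1}\lambda\int_I w^2e^{a(J)w^2}dx\le 1$, and then from the sharpened Moser--Trudinger statement \eqref{stimaMT2s} with $p=2$ and $h(t)=t^2$. Note that $h(t)=t^2$ satisfies \eqref{condh} but \emph{not} the stronger condition $t^{-2}h(t)\to\infty$; the whole point of the Green-function construction in Steps 1--3 of the proof of Theorem \ref{MT2} (solving $(-\Delta)^s u_\tau=f_\tau$ in $I$, the barrier estimate, and the distribution-theoretic argument excluding concentration of $(-\Delta)^s u_\tau$ on $\partial I$) is to produce a concentrating family whose nonlocal norm is controlled with $O(\tau^{-1})$ precision, which is what makes sharpness available for every $h$ satisfying \eqref{condh}. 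By contrast, the cut-off families one would naturally call ``Moser-type'' lose a fixed amount in the $H^{\frac12,2}$-norm because of nonlocality; as Remark \ref{rmk1} and the discussion of Theorem \ref{MT3} explain, such constructions only give sharpness for $h$ with $t^{-2}h(t)\to\infty$, which is strictly insufficient here, since the Nehari identity produces precisely the weight $u^2$.

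Concretely: for your route to work you need a family $M_\epsilon$ with $\|M_\epsilon\|_{H}\le 1$ and $M_\epsilon^2\ge\frac{1}{\pi}\log\frac{1}{r_\epsilon}-O(1)$ on the concentration set $(-r_\epsilon,r_\epsilon)$, i.e. additive $O(1)$ errors in the exponent after normalization; a multiplicative loss $(1-\delta)$ in the norm, which is what a truncation of the log profile or of the fundamental solution gives in the $H^{\frac12,2}$ setting, destroys the argument. Moreover, even granted such a family, the bound $\sup_{t\ge0}\tilde J(tM_\epsilon)<\pi$ for the pure Adimurthi nonlinearity $\lambda t e^{t^2/2}$ is not a ``direct estimate'': the standard proofs argue by contradiction through the first-order condition at the maximizing $t_\epsilon$, using the growth of $t\mapsto t^2e^{t^2/2}$. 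So as written your proposal has a genuine gap: the key idea that the paper supplies (the Green-function test functions of Theorem \ref{MT2}, channelled through \eqref{stimaMT2s} with $h(t)=t^2$) is missing, and the naive implementation of your plan would fail to reach the threshold $\pi$. If you replace your ``direct estimate'' by an appeal to \eqref{stimaMT2s} with $h(t)=t^2$ via the Nehari identity, the rest of your argument (lower bound on $N_\lambda$, Ekeland, Proposition \ref{trmex}, maximum principles via Propositions \ref{maxprinc2} and \ref{maxprinc}) can be completed along the lines you sketch.
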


To prove Theorem \ref{ex} one constructs a sequence $(u_k)$ which is almost of Palais-Smale type for $J$, in the sense that $J(u_k)\to \bar c$ for some $\bar c\in \R{}$ and $\langle J'(u_k), u_k\rangle =0$. Then a modified version of Proposition \ref{trmex} is used, namely Lemma \ref{comp} below. In order to do so, it is crucial to show that $\bar c<\pi$ (Lemma \ref{lemmaaJ2} below) and this will follow from \eqref{stimaMT2s} with $p=2$ and $h(t)=|t|^2$. Interestingly, in the general case $s>1$, $n\ge 2$, $p=\frac{n}{s}$, the analog of \eqref{stimaMT2s} is known only when $s$ is integer or when $h$ satisfies $\lim_{t\to\infty}(t^{-p'}h(t))=\infty$ (see \cite{mar2} and Remark \ref{rmk1} below). 

Both Proposition \ref{trmex} and Theorem \ref{ex} were first proven by Adimurthi \cite{A} in dimension $n\ge 2$ with $(-\Delta)^\frac{1}{2}$ replaced by the $n$-Laplacian. 

\medskip

Let us briefly discuss the blow-up behaviour of solutions to \eqref{eq0}. Extending previous works in even dimension (see e.g. \cite{AS}, \cite{dru}, \cite{mar1}, \cite{RS}) the second and third authors and Armin Schikorra \cite{MMS} studied the blow-up of sequences of solutions to the equation
$$(-\Delta)^\frac{n}{2} u=\lambda u e^{\frac{n}{2}u^2}\quad \text{in }\Omega\Subset\R{n}$$
with suitable Dirichlet-type boundary conditions when $n$ is odd. The moving plane technique for the fractional Laplacian (see \cite{BLW}) implies that a non-negative solution to \eqref{eq0} is symmetric and monotone decreasing from the center of $I$. Then it is not difficult to check that in dimension $1$ Theorem 1.5 and Proposition 2.8 of \cite{MMS} yield:

\begin{trm}\label{compact}
Fix $I=(-R,R)\Subset\R{}$ and let $(u_k)\subset H=\tilde H^{\frac12,2}(I)$ be a sequence of non-negative solutions to
\begin{equation}\label{eq0k}
(-\Delta)^\frac12 u_k=\lambda_k u_ke^{\frac{1}{2}u_k^2}\quad \text{in }I,
\end{equation}
in the sense of \eqref{eq1}. Let $m_k:=\sup_I u_k$ and assume that
$$\Lambda:=\limsup_{k\to\infty}\|u_k\|_H^2<\infty.$$
Then up to extracting a subsequence we have that either
\begin{itemize}
\item[(i)] $u_k\to u_\infty$ in $C^\ell_{\loc}(I)\cap C^0(\bar I)$ for every $\ell\ge 0$, where $u_\infty\in C^\ell_{\loc}(I)\cap C^0(\bar I)\cap H$ solves
\begin{equation}\label{eqinfty}
(-\Delta)^\frac12 u_\infty=\lambda_\infty u_\infty e^{\frac{1}{2}u_\infty^2}\quad \text{in }I,
\end{equation}
for some $\lambda_\infty\in (0,\lambda_1(I))$, or
\item[(ii)] $u_k\to u_\infty$ weakly in $H$ and strongly in $C^0_{\loc}(\bar I\setminus \{0\})$ where $u_\infty$ is a solution to \eqref{eqinfty}. Moreover, setting $r_k$ such that $\lambda_k r_k m_k^2e^{\frac12 m_k^2}$ and
\begin{equation}\label{etak}
\eta_k(x):=m_k(u_k(r_kx)-m_k)+\log 2,\quad \eta_\infty(x):=\log\left(\frac{2}{1+|x|^2}\right),
\end{equation}
one has $\eta_k\to \eta_\infty$ in $C^\ell_{\loc}(\R{})$ for every $\ell\ge 0$ and $\Lambda\ge \|u_\infty\|_H^2+2\pi$.
\end{itemize}
\end{trm}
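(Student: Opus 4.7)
The plan is to reduce the statement to the general blow-up analysis carried out in \cite{MMS} by first exploiting the one-dimensional structure through the moving-planes technique. I would organize the argument around the dichotomy ``$m_k$ bounded vs.\ $m_k\to\infty$'', with the symmetry of $u_k$ playing the role of pinning the only possible concentration point to the origin.

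First I would apply the moving-plane principle for $(-\Delta)^{1/2}$ from \cite{BLW} to each $u_k$: since $u_k\ge 0$, $u_k\equiv 0$ outside the symmetric interval $I=(-R,R)$, and the nonlinearity $t\mapsto \lambda_k t e^{t^2/2}$ is nondecreasing on $[0,\infty)$, this gives $u_k(x)=u_k(-x)$ and $u_k$ is nonincreasing on $[0,R)$. In particular $m_k=u_k(0)$, so any concentration of the sequence can happen only at $x=0$. The assumption $\|u_k\|_H^2\le \Lambda+o(1)$ together with Poincar\'e \eqref{poinc} yields boundedness in $L^2(I)$, and testing the equation \eqref{eq0k} against $u_k$ shows $\lambda_k\int_I u_k^2 e^{u_k^2/2}\,dx\le \Lambda+o(1)$, hence by Lemma \ref{Lp} (the Moser--Trudinger inequality applied along the sequence) the sequence $\lambda_k$ is bounded and, up to a subsequence, $\lambda_k\to\lambda_\infty\ge 0$ and $u_k\rightharpoonup u_\infty$ in $H$.

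Now the dichotomy. If $\limsup m_k<\infty$, then $f_k:=\lambda_k u_k e^{u_k^2/2}$ is bounded in $L^\infty(I)$, so by standard regularity for $(-\Delta)^{1/2}$ (Schauder estimates in the interior, H\"older estimates up to the boundary using the explicit Green/Poisson kernel in one dimension) we get $u_k$ bounded in $C^\ell_{\loc}(I)\cap C^0(\bar I)$ for every $\ell$, and a bootstrap gives the convergence claimed in (i); the strict bounds $\lambda_\infty\in(0,\lambda_1(I))$ then follow from the nonexistence half of Theorem \ref{ex} combined with $u_\infty\not\equiv 0$, the latter being a consequence of $\langle J'(u_k),u_k\rangle=0$ and $u_k\to u_\infty$ strongly. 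If instead $m_k\to\infty$, one enters the framework of \cite[Prop.~2.8]{MMS}: define $r_k>0$ by $\lambda_k r_k m_k^2 e^{m_k^2/2}=1$ and $\eta_k$ as in \eqref{etak}. The rescaled function satisfies
\begin{equation*}
(-\Delta)^{\frac12}\eta_k(x) = m_k \cdot r_k \cdot (-\Delta)^{\frac12} u_k(r_k x) = \frac{u_k(r_kx)}{m_k}\, e^{\frac12(u_k(r_k x)^2 - m_k^2)},
\end{equation*}
and, since $u_k(r_kx)/m_k\to 1$ and $\frac12(u_k(r_kx)^2-m_k^2) = \eta_k(x)-\log 2 + o(1)$ on compact sets, the limit equation is $(-\Delta)^{1/2}\eta_\infty = \frac12 e^{\eta_\infty}$, which is solved by $\eta_\infty(x)=\log\bigl(2/(1+x^2)\bigr)$. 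Away from $0$, $u_k\to u_\infty$ in $C^0_{\loc}(\bar I\setminus\{0\})$ by the same regularity used in case (i), applied on $\bar I\setminus(-\delta,\delta)$ where $u_k$ stays bounded by monotonicity. The energy quantization $\Lambda\ge \|u_\infty\|_H^2+2\pi$ is then obtained by testing the equation with $u_k$, splitting the integral into the part near $0$ (which picks up the bubble energy $\|\eta_\infty\|_{\dot H^{1/2}}^2/2=2\pi$ after change of variables, cf.\ the explicit computation of the $\dot H^{1/2}$-energy of the standard profile) and the complement (which converges to $\|u_\infty\|_H^2$ by weak-strong convergence).

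The main obstacle is the rescaling step: one must verify that $r_k\to 0$, that $\eta_k$ does not diverge to $-\infty$ on compact sets, and that the limit $\eta_\infty$ is exactly the standard profile rather than a general finite-energy solution on $\R{}$; and one must quantify the bubble energy as precisely $2\pi$. All of this is done in \cite[Thm.~1.5 and Prop.~2.8]{MMS} in the general odd-dimensional setting, and the one-dimensional specialization is straightforward because the Green kernel of $(-\Delta)^{1/2}$ on an interval is explicit and the classification of the limit bubbles in $\R{}$ is known (any finite $\dot H^{1/2}$-energy solution of $(-\Delta)^{1/2}\eta=\frac12 e^\eta$ is, up to scaling and translation, $\eta_\infty$). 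Symmetry from the moving-plane step removes the need to locate the concentration point, so no further work beyond invoking \cite{MMS} is required.
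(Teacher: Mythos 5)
Your proposal follows exactly the route the paper itself indicates: it gives no detailed proof of this theorem, but justifies it by applying the moving-plane result of \cite{BLW} to obtain symmetry and monotone decrease from the center of $I$ (pinning any concentration at the origin), and then invoking Theorem 1.5 and Proposition 2.8 of \cite{MMS} for the dichotomy, the bubble profile $\eta_\infty$, and the energy gap $\Lambda\ge\|u_\infty\|_H^2+2\pi$. Since you defer the same hard steps (rescaling limits, classification of the bubble, quantization) to \cite{MMS} and only add routine remarks on regularity and on $\lambda_k\in(0,\lambda_1(I))$, this is essentially the paper's argument.
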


The function $\eta_\infty$ appearing in \eqref{etak} solves the equation
$$(-\Delta)^\frac12 \eta_\infty = e^{\eta_\infty}\quad \text{in }\R{},$$
which has been recently interpreted in terms of holomorphic immersions of a disk (or the half-plane) by Francesca Da Lio, Tristan Rivi\`ere and the third author \cite{DLMR}.

Theorem \ref{compact} should be compared with the two dimensional case, where the analogous equation $-\Delta u =\lambda ue^{u^2}$ on the unit disk
has a more precise blow-up behaviour, see e.g. \cite{AP}, \cite{AS}, \cite{dru}, \cite{MM}.

\subsection{A fractional Moser-Trudinger type inequality on the whole $\R{}$}

When replacing a bounded interval $I$ by $\R{}$, an estimate of the form \eqref{stimaMT} cannot hold, for instance because of the scaling  of \eqref{stimaMT}, or simply because the quantity $\|(-\Delta)^\frac{1}{2p}u\|_{L^p(\R{})}$ vanishes on constants. This suggests to use the full Sobolev norm including the term $\|u\|_{L^p(I)}$ (see Remark \ref{rm1}). This was done by Bernhard Ruf \cite{Ruf} in the case of $H^{1,2}(\R{2})$. We shall adapt his technique to the case $H^{\frac{1}{2},2}(\R{})$.

\begin{trm}\label{MT3}
We have 
\begin{equation}\label{stimaMT3}
\sup_{u\in H^{\frac12,2}(\mathbb{R}), \;\|u\|_{{H}^{\frac12,2}(\R{})}\leq 1} \int_{\mathbb{R}}\left( e^{\pi u^{2}}-1 \right) dx <\infty,
\end{equation}
where $\|u\|_{H^{\frac12,2}(\R{})}$ is defined in \eqref{Hspnorm}. Moreover, for any function
$h:[0,\infty)\to [0,\infty)$ satisfying
\begin{equation}\label{condh2}
\lim_{t\to\infty} (t^{-2}h(t))=\infty
\end{equation}
we have
\begin{equation}\label{stimaMT3s}
\sup_{u\in H^{\frac12,2}(\mathbb{R}),\;  \|u\|_{{H}^{\frac12,2}(\R{})}\leq 1} \int_{\mathbb{R}}h(u)\left( e^{\pi u^{2}}-1\right) dx =\infty.
\end{equation}
In particular the constant $\pi$ in \eqref{stimaMT3} is sharp.
\end{trm}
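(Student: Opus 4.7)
The plan for \eqref{stimaMT3} follows Ruf's two-dimensional strategy \cite{Ruf}: split $\R{}=I\cup I^c$ with $I=(-1,1)$, handle $I^c$ by an elementary $L^2$-type estimate, and handle $I$ by a truncation reducing to Theorem \ref{MT2} with $p=2$ (so that $\alpha_2=\pi$). First I would use the fractional P\'olya--Szeg\H{o} inequality via the Gagliardo representation
\[
\|(-\Delta)^\frac14 u\|_{L^2(\R{})}^2 = c\iint_{\R{}\times\R{}}\frac{(u(x)-u(y))^2}{|x-y|^2}\,dx\,dy
\]
to reduce to $u\ge 0$ symmetric and non-increasing; monotonicity then gives $u(x)^2\le (2|x|)^{-1}\|u\|_{L^2}^2\le 1/2$ on $I^c$, and expanding $e^{\pi u^2}-1=\sum_{k\ge 1}\pi^k u^{2k}/k!\le Cu^2$ bounds $\int_{I^c}(e^{\pi u^2}-1)\,dx$ by a universal constant.

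\textbf{Estimate on $I$.} I would introduce the cutoff $v:=(u-u(1))_+$, which is supported in $\bar I$ and hence lies in $\tilde H^{\frac12,2}(I)$. Since $t\mapsto (t-u(1))_+$ is $1$-Lipschitz, the Gagliardo representation yields
\[
\|(-\Delta)^\frac14 v\|_{L^2}^2 \le \|(-\Delta)^\frac14 u\|_{L^2}^2 \le 1-\|u\|_{L^2}^2.
\]
Writing $u=v+u(1)$ on $I$ and applying Young's inequality, $u^2\le (1+\delta)v^2+(1+\delta^{-1})u(1)^2$ for every $\delta>0$. The choice $\delta:=\|u\|_{L^2}^2/(1-\|u\|_{L^2}^2)$ makes $(1+\delta)\|(-\Delta)^\frac14 v\|_{L^2}^2\le 1$, so that Theorem \ref{MT2} applied to $\sqrt{1+\delta}\,v$ gives $\int_I e^{\pi(1+\delta)v^2}\,dx\le C$; at the same time the pointwise bound $2u(1)^2\le \|u\|_{L^2(I)}^2\le \|u\|_{L^2}^2$ forces $(1+\delta^{-1})u(1)^2\le 1/2$, so the remaining exponential factor is bounded by $e^{\pi/2}$. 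Multiplying these estimates closes the bound on $I$ and, together with the bound on $I^c$, proves \eqref{stimaMT3}.

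\textbf{Sharpness.} Given $h$ with $t^{-2}h(t)\to\infty$, I would set $\tilde h(t):=t^{-2}h(t)$, extended to be bounded and non-negative on $[0,1]$, so that $\tilde h(t)\to\infty$. By \eqref{stimaMT2s} applied on a fixed interval $I_0$ there exist $v_n\in \tilde H^{\frac12,2}(I_0)$ with $\|(-\Delta)^\frac14 v_n\|_{L^2}\le 1$ and $\int_{I_0}\tilde h(v_n)(e^{\pi v_n^2}-1)\,dx\to\infty$. Choosing $v_n$ to be a Moser-type concentrating profile forces $\|v_n\|_{L^2}\to 0$; splitting the integral along $\{v_n<1\}$ (uniformly bounded) and $\{v_n\ge 1\}$ (where $h(v_n)\ge \tilde h(v_n)$) yields $\int_{I_0}h(v_n)(e^{\pi v_n^2}-1)\,dx\to\infty$. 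Finally, $w_n:=v_n/\|v_n\|_{H^{\frac12,2}(\R{})}$ has unit $H^{\frac12,2}$-norm and $w_n^2=v_n^2(1-\epsilon_n)$ with $\epsilon_n\to 0$; the decisive quantitative check is that the Moser profile can be chosen so that $\epsilon_n\sup v_n^2\to 0$, which then preserves the divergence for $w_n$. The chief obstacle throughout is the non-locality of $(-\Delta)^\frac14$: the cutoff and rescaling steps that are routine for the gradient must be rerouted through the Gagliardo seminorm, but truncation by $1$-Lipschitz maps interacts cleanly with that representation, so the scheme goes through.
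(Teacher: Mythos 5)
Your proof of \eqref{stimaMT3} is essentially the same as the paper's: symmetric rearrangement via the fractional P\'olya--Szeg\H{o} inequality, the elementary radial decay bound $u(x)^2\le\|u\|_{L^2}^2/(2|x|)$ to control $\int_{I^c}$, and a cut-off $v$ which is supported in $\bar I$ and whose Gagliardo seminorm does not exceed that of $u$. The paper proves the seminorm inequality by directly comparing the Gagliardo integrands (separately on $I$ and $I^c$), whereas you invoke the fact that a $1$-Lipschitz composition decreases the Gagliardo seminorm; both are valid, and yours is slightly slicker. Your Young's-inequality parametrization with $\delta=\|u\|_{L^2}^2/(1-\|u\|_{L^2}^2)$ replaces the paper's bound $u^2\le v^2(1+\|u\|_{L^2}^2)+2$; these give the same conclusion. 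This half is correct.

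\textbf{Sharpness.} Here there is a genuine gap, and it is precisely the one the paper works hard to circumvent. You propose to take the test functions $v_n$ from the proof of \eqref{stimaMT2s} (Theorem \ref{MT2}) on a fixed interval, and then normalize $w_n := v_n/\|v_n\|_{H^{1/2,2}(\R{})}$. But the $v_n$ in Theorem \ref{MT2} are normalized so that $\|(-\Delta)^{1/4}v_n\|_{L^2(I_0)}\le 1$, i.e.\ by the $L^2$-norm of the fractional Laplacian restricted to $I_0$. The full norm $\|v_n\|_{H^{1/2,2}(\R{})}^2$ additionally contains $\|v_n\|_{L^2(\R{})}^2$ \emph{and} $\|(-\Delta)^{1/4}v_n\|_{L^2(I_0^c)}^2$, and the latter is by no means small: $(-\Delta)^{1/4}v_n$ is supported on all of $\R{}$ and, near $\partial I_0$, it is estimated only through bounds involving $\|f_\tau\|_{L^\infty(I_0)}$, which blows up exponentially in $\tau$. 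Remark \ref{rm1} tells you only that the two norms are equivalent, with an unspecified constant $C$; if $C>1$, normalizing by $\|v_n\|_{H^{1/2,2}(\R{})}$ destroys the exponent. You then assert that ``the Moser profile can be chosen so that $\epsilon_n\sup v_n^2\to 0$'' without any argument. This is exactly the decisive step, and it cannot simply be claimed: the paper's own explicit construction (which does \emph{not} reuse the Theorem \ref{MT2} test functions, but instead builds new ones from a Riesz potential $F_{1/4}*f$ multiplied by a cutoff $\psi$ and carries out the estimates \eqref{stimadeltau}, \eqref{stimeu}, \eqref{stimaufin} from scratch) only achieves $\epsilon_n\sup v_n^2=O(1)$, not $o(1)$. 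That is precisely why the stronger hypothesis $t^{-2}h(t)\to\infty$ appears in Theorem \ref{MT3}, in contrast to the mere $h(t)\to\infty$ of Theorem \ref{MT2}; the paper even poses as an open question whether \eqref{stimaMT3s} holds under the weaker hypothesis. If you could really achieve $\epsilon_n\sup v_n^2\to 0$ you would resolve that open question — a sign that the claim is doing far too much work for free. To fix your argument you would need to (i) explicitly control $\|(-\Delta)^{1/4}v_n\|_{L^2(I_0^c)}$ and $\|v_n\|_{L^2(\R{})}$ for concrete test functions, which in practice forces a different construction, and (ii) settle for $\epsilon_n\sup v_n^2=O(1)$ and carry the corresponding multiplicative constant through the exponential, absorbing it thanks to the extra factor $h(w_n)\gtrsim w_n^2\,\tilde h(w_n)$ supplied by \eqref{condh2}.
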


A main ingredient in the proof of \eqref{stimaMT3} is a fractional P\'olya-Szeg\H{o} inequality which seems to be known only in the $L^2$ setting, being based mainly on Fourier transform techniques.

\begin{oq} Does an $L^p$-version of Theorem \ref{MT3} hold, i.e. can we replace $H^{\frac12,2}$ with $H^{\frac1p,p}$ in \eqref{stimaMT3}? 
\end{oq}

The reason for requiring \eqref{condh2} in Theorem \ref{MT3} (contrary to Theorem \ref{MT2}, where \eqref{condh} suffices) is that the test functions for \eqref{stimaMT3s} will be constructed using a cut-off procedure, and due to the non-local nature of the $H^{\frac12,2}$-norm, giving a precise estimate for the norm of such test functions is difficult.

\begin{oq} In analogy with Theorem \ref{MT2}, does \eqref{stimaMT3s} hold for every $h$ satisfying \eqref{condh}?
\end{oq}

In the following sections we shall prove Theorems \ref{MT2}, \ref{ex} and \ref{MT3}, and Proposition \ref{trmex}. In the appendix we collected some useful results about fractional Sobolev spaces and fractional Laplace operators.

\section{Theorem \ref{MT2}}\label{MTI}
\subsection{Idea of the proof}

The following analog of \eqref{stimaMT}
\begin{equation}\label{stimaMT2bis}
\sup_{u=c_pI_\frac1p* f\,:\, \mathrm{supp}(f)\subset\bar I, \,\|f\|_{L^{p}(I)}\leq 1}\int_{I}e^{\alpha_p |u|^{p'}}dx = C_p|I|, \quad I_{\frac1p}(x):=|x|^{\frac1p -1}
\end{equation}
is well-known (also in higher dimension, see e.g. \cite[Theorem $3.1$] {XZ}), since it follows easily from the Theorem 2 in \cite{ada}, up to choosing $c_p$ so that
\begin{equation}\label{defIp}
c_p(-\Delta)^\frac1{2p} I_\frac1p =\delta_0,
\end{equation}
compare to Lemma \ref{green2} below.

In \eqref{stimaMT2bis} one requires that the support of $f=(-\Delta)^\frac{1}{2p}u$ is bounded; following Adams \cite{ada} one would be tempted to write $u=I_{\frac{1}{p}}*(-\Delta)^\frac{1}{2p}u$ and apply \eqref{stimaMT2bis}, but the support of $(-\Delta)^\frac{1}{2p} u$ is in general not bounded, when $u$ is compactly supported.

In order to circumvent this issue, we rely on a Green representation formula of the form
$$u(x)=\int_I G_\frac{1}{2p}(x,y)(-\Delta)^{\frac{1}{2p}}u(y)dy,$$
and show that 
$|G_{\frac{1}{2p}}(x,y)|\le I_\frac{1}{p}(x-y)$ for $x\ne y$. This might follow from the explicit formula of $G_{s}(x,y)$, which is known on an interval, see e.g. \cite{BGR} and \cite{CB}, but we prefer to follow a more self-contained path, only using the maximum principle.

More delicate is the proof of \eqref{stimaMT2s}.
We will construct functions $u$ supported in $\bar I$ with $(-\Delta)^\frac{1}{2p} u=f$ for some prescribed function $f\in L^p(I)$ suitably concentrated. Then with a barrier argument we will show that $u\in \tilde H^{\frac1p,p}(I)$, i.e. $(-\Delta)^\frac{1}{2p} u\in L^p(\R{})$.  This is not obvious because $(-\Delta)^\frac{1}{2p}$ is a non-local operator and even if $u\equiv 0$ in $I^c$, $(-\Delta)^\frac{1}{2p}u$ does not vanish outside $I$, and a priori it could even concentrate on $\de I$. 

\begin{rmk}\label{rmk1} An alternative approach to \eqref{stimaMT2s} uses the Riesz potential and a cut-off function $\psi$, as done in \cite{mar2} following a suggestion of A. Schikorra. This works in every dimension and for arbitrary powers of $-\Delta$, but it is less efficient in the sense that the $\|(-\Delta)^ s \psi\|_{L^p}$ is not sufficiently small, and \eqref{stimaMT2s} (or its higher-order analog) can be proven only for functions $h$ such that $\lim_{t\to\infty} (t^{-p'}h(t))=\infty$. On the other hand, the approach used here to prove \eqref{stimaMT2s} for every $h$ satisfying \eqref{condh} does not work for higher-order operators, since for instance if for $\Omega\Subset \R{4}$ we take $u\in W^{1,2}_0(\Omega)$ solving $\Delta u = f\in L^2(\Omega)$, then we do not have in general $u\in W^{2,2}(\R{4})$. 
\end{rmk}

\subsection{Proof of Theorem \ref{MT2}}
By a simple scaling argument it suffices to prove \eqref{stimaMT} for a given interval, say $I=(-1,1)$.

\begin{lemma}\label{green2}
For $s\in \left(0,\frac12\right)$ the fundamental solution of $(-\Delta)^s$ on $\R{}$ is
$$F_s(x)=\frac{1}{2\cos(s\pi)\Gamma(2s) |x|^{1-2s} },$$
i.e. $(-\Delta)^s F_s=\delta_0$ in the sense of tempered distributions.
\end{lemma}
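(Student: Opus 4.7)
The plan is to verify the identity via Fourier analysis at the level of tempered distributions. Since $s\in(0,\tfrac12)$, the exponent $2s-1$ lies in $(-1,0)$, so $F_s\in L^1_{\loc}(\R{})$ and is bounded at infinity, hence defines a tempered distribution. Given the definition \eqref{deffrlap}, it suffices to prove the distributional identity
\begin{equation*}
\widehat{F_s}(\xi)=\frac{1}{\sqrt{2\pi}}|\xi|^{-2s},
\end{equation*}
because then for any $\varphi\in\M{S}$, combining Parseval's theorem with $\widehat{(-\Delta)^s\varphi}(\xi)=|\xi|^{2s}\hat\varphi(\xi)$ and the Fourier inversion formula evaluated at $x=0$ gives
\begin{equation*}
\langle(-\Delta)^s F_s,\varphi\rangle=\int_{\R{}} F_s(x)(-\Delta)^s\varphi(x)\,dx=\frac{1}{\sqrt{2\pi}}\int_{\R{}}\hat\varphi(\xi)\,d\xi=\varphi(0)=\langle\delta_0,\varphi\rangle,
\end{equation*}
after using the evenness of $\widehat{F_s}$ to absorb the sign flip in $\xi$.

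The core computation is therefore the one-dimensional Riesz-potential identity
\begin{equation*}
\widehat{|\cdot|^{2s-1}}(\xi)=\frac{2^{2s-\frac12}\,\Gamma(s)}{\Gamma\!\left(\frac{1-2s}{2}\right)}\,|\xi|^{-2s}.
\end{equation*}
This is classical and can be established for instance by introducing a Gaussian cut-off $|x|^{2s-1}e^{-\varepsilon x^{2}}$, computing its Fourier transform as an explicit Euler-type integral via the substitution $t=\varepsilon x^{2}$, and passing to the limit $\varepsilon\to 0^{+}$ in $\M{S}'$. Alternatively one views both sides as meromorphic functions of the complex parameter $s$ and argues by analytic continuation from a strip in which the Fourier integral converges absolutely.

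Combining the two steps, the claim reduces to the algebraic identity
\begin{equation*}
\frac{2^{2s-\frac12}\,\Gamma(s)}{2\cos(\pi s)\,\Gamma(2s)\,\Gamma\!\left(\frac{1-2s}{2}\right)}=\frac{1}{\sqrt{2\pi}},
\end{equation*}
which follows by combining the Legendre duplication formula $\Gamma(2s)=\tfrac{2^{2s-1}}{\sqrt{\pi}}\,\Gamma(s)\,\Gamma(s+\tfrac12)$ with the Euler reflection identity $\Gamma(\tfrac12-s)\,\Gamma(\tfrac12+s)=\pi/\cos(\pi s)$. The main, and quite modest, technical hurdle is giving a clean meaning to the Fourier transform of $|x|^{2s-1}$, which belongs neither to $L^{1}$ nor to $L^{2}$; once this is handled via regularization or analytic continuation, the rest is bookkeeping of constants.
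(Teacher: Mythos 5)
Your argument is correct: the constant bookkeeping (duplication plus reflection formula) indeed yields $\widehat{F_s}(\xi)=\tfrac{1}{\sqrt{2\pi}}|\xi|^{-2s}$ with the paper's Fourier convention, and this is exactly the route the paper takes, which simply cites the classical Riesz-kernel Fourier identity (Theorem 5.9 in Lieb--Loss) instead of rederiving it. So the proposal matches the paper's proof, only in a more self-contained form via regularization/analytic continuation of $\widehat{|x|^{2s-1}}$.
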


\begin{proof} This follows easily e.g. from Theorem 5.9 in \cite{LL}.
\end{proof}

\begin{lemma}\label{lemmagreen3} 
Fix $s\in \left(0,\frac12\right)$. For any $x\in I=(-1,1)$ let $g_x\in C^\infty(\R{})$ be any function with $g_x(y)=F_s(x-y)$ for $y\in I^c$. Then there exists $H_s(x,\cdot )\in \tilde H^{s,2}(I)+g_x$ unique solution to
\begin{equation}\label{H}
\left\{
\begin{array}{ll}
(-\Delta)^sH_s(x,\cdot)=0&\text{in } I\\
H_s(x,\cdot )=g_x&\text{in } \R{}\setminus I
\end{array}
\right.
\end{equation}
and the function
$$G_s(x,y):= F_s(x-y)-H_s(x,y), \qquad (x,y)\in I\times\R{}$$
is the Green function of $(-\Delta)^s$ on $I$, i.e. for $x\in I$ it satisfies 
\begin{equation}\label{eqgreen}
\left\{
\begin{array}{ll}
(-\Delta)^sG_s(x,\cdot)=\delta_x&\text{in } I\\
G(x,y)=0&\text{for }y\in \R{}\setminus I.
\end{array}
\right.
\end{equation}
Moreover
\begin{equation}\label{G<F}
0<G_s(x,y)\leq F_s(x-y) \quad \text{for }y\ne x\in I.
\end{equation}
Finally, for any function $u\in \tilde H^{2s,p}(I)$ ($p\in [1,\infty)$) we have
\begin{equation}\label{eqgreenb}
u(x) = \int_I G_s(x,y)(-\Delta)^s u(y)dy,\quad  \text{for a.e. }x\in I,
\end{equation}
where the right-hand side is well defined for a.e. $x\in I$ thanks to \eqref{G<F} and Fubini's theorem.
\end{lemma}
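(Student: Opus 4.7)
The plan is to build $H_s(x,\cdot)$ variationally, read off $G_s = F_s - H_s$ together with its boundary behaviour immediately, obtain the pointwise bounds \eqref{G<F} from the fractional maximum principle, and finally derive the representation \eqref{eqgreenb} by duality. For existence and uniqueness of $H_s(x,\cdot)$, I would substitute $H_s(x,\cdot) = g_x + v$ with $v \in \tilde H^{s,2}(I)$, so that \eqref{H} becomes the single equation
$$(-\Delta)^s v = -(-\Delta)^s g_x \quad \text{in } I, \qquad v \in \tilde H^{s,2}(I),$$
the exterior condition being built into the space. Since $g_x$ is smooth on $\R{}$ and agrees with $F_s(x-\cdot)$ on $I^c$, one checks that $(-\Delta)^{s/2} g_x \in L^2(\R{})$ for $s \in (0,1/2)$ (decay at infinity being of order $|y|^{s-1}$, which is square-integrable precisely in this range), so the right-hand side defines a bounded linear functional on $\tilde H^{s,2}(I)$. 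The bilinear form $(u,w) \mapsto \int_{\R{}} (-\Delta)^{s/2} u \, (-\Delta)^{s/2} w \, dx$ is continuous and coercive on $\tilde H^{s,2}(I)$ (coercivity via a fractional Poincaré inequality as in \eqref{poinc}), so Lax--Milgram yields a unique $v$, hence a unique $H_s(x,\cdot)$.

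With $G_s(x,y) := F_s(x-y) - H_s(x,y)$, the equation $(-\Delta)^s G_s(x,\cdot) = \delta_x$ in $I$ is immediate from Lemma \ref{green2} together with $(-\Delta)^s H_s(x,\cdot) = 0$, and $G_s(x,y) = 0$ for $y \in I^c$ is built into the choice of $g_x$. For the two-sided bound \eqref{G<F}, I would apply the weak maximum principle for $(-\Delta)^s$ twice. First, $H_s(x,\cdot)$ is $(-\Delta)^s$-harmonic in $I$ with nonnegative exterior data $g_x = F_s(x-\cdot) > 0$, which forces $H_s \geq 0$ and hence $G_s \leq F_s$. Second, $G_s(x,\cdot)$ satisfies $(-\Delta)^s G_s = \delta_x \geq 0$ in $I$ with zero exterior data, giving $G_s \geq 0$; the strong maximum principle then upgrades this to $G_s > 0$ on $I \setminus \{x\}$, since $G_s$ blows up at $x$ and cannot vanish identically on $I$.

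Finally, for \eqref{eqgreenb} I would first take $u \in C^\infty_c(I)$ and test the distributional identity $(-\Delta)^s G_s(x,\cdot) = \delta_x$ against $u$, using the symmetry $\int ((-\Delta)^s f)\, g \, dy = \int f\,((-\Delta)^s g)\, dy$ to move the operator onto $u$. Since both $u$ and $G_s(x,\cdot)$ vanish on $I^c$, the pairing collapses to integrals over $I$, yielding $u(x) = \int_I G_s(x,y)(-\Delta)^s u(y) \, dy$. Density of $C^\infty_c(I)$ in $\tilde H^{2s,p}(I)$, combined with the bound \eqref{G<F} to control the right-hand side uniformly, extends the formula to arbitrary $u \in \tilde H^{2s,p}(I)$. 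I expect the main obstacle to be precisely this symmetric integration by parts: because $G_s(x,\cdot)$ lies only in $\tilde H^{s,2}(I)$ and carries an $|y-x|^{2s-1}$ singularity at $y = x$, the identity $\int ((-\Delta)^s G_s) u\, dy = \int G_s\,(-\Delta)^s u\, dy$ must be justified by smoothing $G_s(x,\cdot)$ away from $x$ (or equivalently by Fourier/Plancherel on the smooth factor $u$) before passing to the limit.
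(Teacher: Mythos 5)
Your overall architecture agrees with the paper's: variational construction of $H_s(x,\cdot)$ via Lax--Milgram (this is exactly Theorem \ref{trmexist}), the identity $(-\Delta)^s G_s(x,\cdot)=\delta_x$ from Lemma \ref{green2}, the bound \eqref{G<F} by two applications of a maximum principle, and the representation \eqref{eqgreenb} by testing against $C^\infty_c(I)$ and then passing to general $u\in\tilde H^{2s,p}(I)$ by density. The first half of your maximum-principle argument (nonnegative exterior data $g_x>0$ forces $H_s\ge 0$, hence $G_s\le F_s$) is fine: here $(-\Delta)^sH_s=0\in L^2(I)$, so the variational comparison principle of Proposition \ref{maxprinc2} applies directly.

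The gap is in the second half. You assert that $(-\Delta)^sG_s=\delta_x\ge 0$ in $I$ together with zero exterior data gives $G_s\ge 0$, but neither maximum principle at your disposal applies as stated. The variational one (Proposition \ref{maxprinc2}) requires the right-hand side to lie in $L^2(I)$, which $\delta_x$ does not, and it operates on functions in $\tilde H^{s,2}(I)+g$, whereas $G_s(x,\cdot)$ is not of this form (it has a non-square-integrable singularity at $y=x$). Silvestre's pointwise maximum principle (Proposition \ref{maxprinc}) is the right tool, but it requires $\liminf_{y\to\partial I}G_s(x,y)\ge 0$, a boundary condition that is \emph{not} automatic from ``zero exterior data''; a priori, mass could concentrate at $\partial I$. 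The paper fills this by proving $H_s(x,y)\to F_s(x\mp 1)$ as $y\to\pm 1$ (equation \eqref{Hcont}), which it deduces by applying the boundary-regularity estimate of Proposition \ref{trmbordo} to $\tilde H_s:=H_s-g_x\in\tilde H^{s,2}(I)$, using that $(-\Delta)^s g_x\in L^\infty(I)$ since $g_x$ is smooth. You need this intermediate step before Proposition \ref{maxprinc} becomes applicable. Once it is in place, your remark that $G_s$ blows up at $y=x$ (since $H_s$ is bounded by Proposition \ref{trmbordo} while $F_s$ is not) and hence cannot vanish identically is correct, and gives strict positivity.

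A smaller remark on \eqref{eqgreenb}: your worry about the symmetric integration by parts $\int((-\Delta)^sG_s)u=\int G_s(-\Delta)^su$ is unnecessary. In the paper's framework, $(-\Delta)^sG_s(x,\cdot)$ is \emph{defined} by duality via \eqref{deffrlap}, i.e.\ $\langle(-\Delta)^sG_s(x,\cdot),u\rangle:=\int_{\R{}}G_s(x,y)(-\Delta)^su(y)\,dy$ for $u\in C^\infty_c(I)$; this pairing is finite because $G_s(x,\cdot)$ is compactly supported with an integrable singularity. Combined with $(-\Delta)^sG_s(x,\cdot)=\delta_x$ in $\mathcal D'(I)$ and the fact that $G_s(x,\cdot)\equiv 0$ on $I^c$, the formula $u(x)=\int_IG_s(x,y)(-\Delta)^su(y)\,dy$ is immediate for $u\in C^\infty_c(I)$, with no smoothing or Plancherel argument required; the density step then proceeds as you describe, using \eqref{G<F} and Fubini.
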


\begin{rmk}
The first equations in \eqref{H} above and in \eqref{eqgreen} below are intended in the sense of distribution, compare to \eqref{deffrlap}.
\end{rmk}

\begin{proof} 
The existence and non-negativity of $H_s(x,\cdot)$ for every $x\in I$ follow from Theorem \ref{trmexist} and Proposition \ref{maxprinc2} in the Appendix.
The next claim, namely \eqref{eqgreen}, follows at once from Lemma \ref{green2} and \eqref{H}.

We show now that $G(x,y)\geq 0$ for every $(x,y)\in I\times I$. We claim that
\begin{equation}\label{Hcont}
\lim_{y\to \pm 1}H_s(x,y)= H_s(x,\pm 1)=F_s(x \mp1),
\end{equation}
hence $G_s(x,y )\to 0$ as $y\to \de I$, and by Silvestre's maximum principle, Proposition \ref{maxprinc} below, we also have $G_s(x,\cdot )\ge 0$ for every $x\in I$, hence also \eqref{G<F} follows.
For the proof of \eqref{Hcont} notice that
$$\tilde H_s(x,\cdot):= H_s(x,\cdot)-g_x\in \tilde H^{s,2}(I)$$
satifies
$$
\left\{
\begin{array}{ll}
(-\Delta)^s\tilde H_s(x,\cdot)=- (-\Delta)^s g_x&\text{in } I\\
\tilde H_s(x,\cdot )=0 &\text{in } \R{}\setminus I
\end{array}
\right.
$$
and $((-\Delta)^s g_x)|_I\in L^\infty( I)$ by Proposition \ref{lapint} (we are using that $g_x\in C^\infty(\R{})$), hence Proposition \ref{trmbordo} gives $\tilde H_s(x,y)\to 0$ as $y\to\de I$, and \eqref{Hcont} follows at once.

To prove \eqref{eqgreenb}, let us start considering $u\in C^\infty_c(I).$ Then, according to \eqref{eqgreen}, we have
$$u(x)=\langle \delta_x,u\rangle=\langle (-\Delta)^s G_s(x,\cdot),u\rangle=\int_I G_s(x,y)(-\Delta)^su(y)dy.$$
Given now $u\in \tilde H^{2s,p}(I)$, let $(u_k)_{k\in \mathbb{N}}\subset C^\infty_c(I)$ converge to $u$ in $\tilde H^{2s,p}(I)$, i.e.
$$u_k\to u,\quad (-\Delta)^s u_k\to (-\Delta)^s u\quad \text{in }L^p(\R{}), \text{ hence in }L^1(I),$$
see Lemma \ref{lemmadens}. Then
$$u \overset{L^1(I)}\longleftarrow u_k=\int_I G_s(\cdot ,y)(-\Delta)^su_k(y)dy\overset{L^1(I)}\longrightarrow \int_I G_s(\cdot ,y)(-\Delta)^su(y)dy,$$
the convergence on the right following from \eqref{G<F} and Fubini's theorem:
\[
\begin{split}
\int_I &\left|\int_I G_s(x,y)\left[(-\Delta)^su_k(y)- (-\Delta)^su(y)\right]dy \right|dx\\
&\le \int_I \int_I F_s(x-y)\left|(-\Delta)^su_k(y)- (-\Delta)^su(y)\right| dx dy\\
&\le \sup_{y\in I}\|F_s\|_{L^1(I-y)} \|(-\Delta)^su_k- (-\Delta)^su\|_{L^1(I)}\to 0
\end{split}
\]
as $k\to \infty$. Since the convergence in $L^1$ implies the a.e. convergence (up to a subsequence), \eqref{eqgreenb} follows.
\end{proof}

\noindent\emph{Proof of Theorem \ref{MT2}.} 
Set $s=\frac{1}{2p}$. From Lemma \ref{lemmagreen3} we get 
$$0\le (2\alpha_p)^\frac{p-1}{p}  G_s(x,y)\leq I_\frac{1}{p}(x-y)=|x-y|^{\frac1p-1},$$
where $G_s$ is the Green's function of the interval $I$ defined in Lemma \ref{lemmagreen3}.
Choosing $f:=|(-\Delta)^\frac{1}{2p} u|\big|_{I}$ and using \eqref{G<F} and \eqref{eqgreenb}, we bound
$$ (2\alpha_p)^\frac{p-1}{p} |u(x)| \le   (2\alpha_p)^\frac{p-1}{p} \int_I G_s(x,y)f(y)dy \le I_\frac{1}{p}*f(x) $$
and \eqref{stimaMT} follows at once from \eqref{stimaMT2bis}.

\medskip

It remains to show \eqref{stimaMT2s}. The proof is based on the construction of suitable test functions and it is split into steps.

\medskip

\noindent \emph{Step 1. Definition of the test functions.}
We fix $\tau\ge 1$ and  set
\begin{equation}\label{deff}
f(y)=f_\tau (y):=\frac{1}{2\tau}|y|^{-\frac{1}{p}}\chi_{[-\frac12,-r]\cup [r,\frac12]},\quad r:=\frac{e^{-\tau}}{2}.
\end{equation}
Notice that
$$\|f\|_{L^{p}}^{p}=\frac{2}{(2\tau)^p}\int_r^{\frac12}\frac{dy}{y}=\frac{1}{(2\tau)^{p-1}}.$$
Now let $u=u_\tau\in \tilde H^{s,2}(I)$ solve
\begin{equation}\label{equf}
\left\{
\begin{array}{ll}
(-\Delta)^s u=f &\text{in }I\\
u\equiv 0&\text{in }I^c.
\end{array}
\right.
\end{equation}
in the sense of Theorem \ref{trmexist} in the appendix. 

\medskip

\noindent\emph{Step 2. Proving that $u\in \tilde H^{2s,p}(I)$.}
According to  Proposition \ref{trmbordo} $u$ satisfies
\begin{equation}\label{ubordo}
|u(x)|\le C\|f\|_{L^\infty} (1-|x|)^s\quad \text{for }x\in I.
\end{equation}
We want to prove that $(-\Delta)^s u\in L^p(\R{})$. Since by Proposition \ref{lapint}
$$(-\Delta)^s u(x)= C_s\int_I\frac{-u(y)}{|x-y|^{1+2s}}dy,\quad \text{for }|x|>1$$
and $u$ is bounded, we see immediately that
$$|(-\Delta)^s u(x)|\le \frac{C}{|x|^{1+2s}},\quad \text{for }|x|\ge 2,$$
hence
\begin{equation}\label{stimadeltau1}
\| (-\Delta)^s u\|_{L^q(\R{}\setminus [-2,2])}<\infty\quad \text{for every } q\in [1,\infty).
\end{equation}
Now we claim that
\begin{equation}\label{stimadeltau2}
(I):=\|(-\Delta)^s u\|_{L^q([-2,2]\setminus [-1,1])}<\infty, \quad q=\max\{p,2\}.
\end{equation}
Again using Proposition \ref{lapint}, \eqref{ubordo} and translating, we have
$$(I)=\left(\int_{[-2,2]\setminus [-1,1]}\left|C\int_{-1}^1\frac{-u(y)dy}{|y-x|^{1+2s}}\right|^q dx\right)^\frac{1}{q}\le C\left(\int_{-1}^0\left|\int_0^2\frac{y^sdy}{(y-x)^{1+2s}}\right|^q dx\right)^\frac1q,$$
and using the Minkowski inequality
$$\left(\int_{A_1}\left|\int_{A_2}F(x,y)dy\right|^q dx\right)^\frac1q\le \int_{A_2}\left(\int_{A_1} |F(x,y)|^qdx\right)^\frac1q dy,$$
we get
\[
(I)\le C\int_0^2y^s\left(\int_{-1}^0\frac{dx}{(y-x)^{(1+2s)q}}\right)^\frac1q dy
\le C\int_0^2\frac{dy}{y^{1+s-\frac1q}}<\infty,
\]
since $1+s-\frac1q<1.$ This proves \eqref{stimadeltau2}.

To conclude that $(-\Delta)^s u\in L^p(\R{})$ it remains to show that $(-\Delta)^s u$ does not concentrate on $\de I=\{-1,1\}$, in the sense that the distribution defined by
\[\begin{split}
\langle T,\varphi\rangle&:=\int_{\R{}} u(-\Delta)^s\varphi dx -\int_{I} f\varphi dx -C_s\int_{I^c} \int_{\R{}} \frac{-u(y)}{|x-y|^{1+2s}}dy\, \varphi(x) dx\\
&=:\langle T_1,\varphi\rangle-\langle T_2,\varphi\rangle-\langle T_3,\varphi\rangle \quad \text{for }\varphi\in C^\infty_c(\R{})
\end{split}\]
vanishes. Notice that $\langle T,\varphi\rangle=0$ for $\varphi\in C^\infty_c(\R{}\setminus \de I)$, since $T_1=(-\Delta)^s u$, while
$$\langle T_2,\varphi\rangle =\langle (-\Delta)^s u,\varphi\rangle,\quad  \langle T_3,\varphi\rangle =0 \quad \text{for  }\varphi\in C^\infty_c(I)$$
by \eqref{equf}, and
$$\langle T_2,\varphi\rangle =0,\quad  \langle T_3,\varphi\rangle =\langle (-\Delta)^s u,\varphi\rangle \quad \text{for  }\varphi\in C^\infty_c(I^c)$$
by Proposition \ref{lapint}, and for $\varphi\in C^\infty_c(\R{}\setminus \de I)$ we can split $\varphi=\varphi_1+\varphi_2$ with $\varphi_1\in C^\infty_c(I)$ and $\varphi_2\in C^\infty_c(I^c)$.
In particular $\mathrm{supp}(T)\subset \de I.$

It is easy to see that $T_1$ is a distribution of order at most $1$, i.e.
$$\left|\int_{\R{}} u (-\Delta)^s\varphi dx\right|\le C\|\varphi\|_{C^1(\R{})},\quad \text{for every }\varphi\in C^\infty_c(\R{})$$
(use for instance Proposition \ref{lapint}), and that $T_2$ and $T_3$ are distributions of order zero, i.e. 
$$|\langle T_i ,\varphi\rangle|\le C\|\varphi\|_{L^\infty(\R{})}\quad \text{for }i=2,3.$$
Since $\mathrm{supp} (T)\subset \de I$ it follows from Schwartz's theorem (see e.g. \cite[Sec. 6.1.5]{bon})  that
$$T=\alpha\delta_{-1}+\beta \delta_{1}+\tilde \alpha D\delta_{-1}+\tilde \beta D\delta_1,\quad \text{for some }\alpha,\beta,\tilde \alpha,\tilde\beta \in\R{},$$
where $\langle D\delta_{x_0},\varphi\rangle :=-\langle \delta_{x_0},\varphi'\rangle=-\varphi'(x_0)$ for $\varphi\in C^\infty_c(\R{})$.

In order to show that $\tilde \alpha =0$, take $\varphi\in C^\infty_c(\R{})$ with
$$\mathrm{supp}(\varphi)\subset (-1,1),\quad \varphi'(0)=1,\quad \varphi(0)=0,$$
and rescale it by setting for
$\varphi_\lambda(-1+x)= \lambda \varphi(\lambda^{-1}x)$ for $\lambda>0$.
Since $T_2$ and $T_3$ have order $0$ it follows
$$|\langle T_i,\varphi_\lambda\rangle|\le C \lambda \to 0 \text{ as }\lambda\to 0,\quad \text{for }i=2,3.$$
As for $T_1$, using Proposition \ref{lapint} we get 
\[\begin{split}
\frac{\langle T_1,\varphi_\lambda\rangle}{C_s}&= \int_{(B_{2\lambda}(-1))^c}u(x)\int_{B_\lambda(-1)}\frac{-\varphi_\lambda(y)}{|x-y|^{1+2s}}dydx\\
& \quad +\int_{B_{2\lambda}(-1)}u(x)\int_{(B_{4\lambda}(-1))^c}\frac{\varphi_\lambda(x)}{|x-y|^{1+2s}}dydx\\
&\quad +\int_{B_{2\lambda}(-1)}u(x)\int_{B_{4\lambda}(-1)}\frac{\varphi_\lambda(x)-\varphi_\lambda(y)}{|x-y|^{1+2s}}dydx\\
&=: (I)+(II)+(III).
\end{split}\]
Since $\|\varphi_\lambda\|_{L^\infty(\R{})}=C_\varphi\lambda$ and $u\in L^\infty(\R{})$, one easily bounds $|(I)|+|(II)|\to 0$ as $\lambda\to 0$, and using that $\sup_{\R{}}|\varphi'_\lambda|=\sup_{\R{}}|\varphi'|$ we get
$$|(III)|\le \int_{B_{2\lambda}(-1)}|u(x)|\int_{B_{4\lambda}(-1)}\frac{\sup_{\R{}}|\varphi'|}{|x-y|^{2s}}dydx\le C\lambda^{1-2s}\int_{B_{2\lambda}(-1)}|u(x)|dx\to 0\quad \text{as }\lambda\to 0.$$
Since for $\lambda\in (0,1)$ we have $\langle T,\varphi\rangle=- \tilde \alpha$, by letting $\lambda\to 0$ it follows that $\tilde\alpha=0$. Similarly one can prove that $\tilde\beta=0$.

We now claim that $\alpha,\beta=0$.
Considering
$$\tilde u(x):= u(x)-\alpha F_s(x +1)-\beta F_s(x -1),$$
and recalling that $(-\Delta)^s F_s=\delta_0$, one obtains that
$$(-\Delta)^s\tilde u= T_1-\alpha\delta_{-1}-\beta\delta_{1}=T_2+T_3 \in L^2(\R{}),$$
hence with Proposition \ref{HW}
$$\int_{\R{}}\int_{\R{}}\frac{|\tilde u(x)-\tilde u(y)|^2}{|x-y|^{1+2s}}dxdy= [\tilde u]_{W^{2s,2}(\R{})}^2=C\|(-\Delta)^s \tilde u\|_{L^2(\R{})}^2<\infty,$$
and this gives a contradiction if $\alpha\neq 0$ or $\beta\ne 0$ since the integral on the left-hand side does not converge in these cases.

Then  $T=0$, i.e. $(-\Delta)^s u=:T_1= T_2+T_3$ and from \eqref{equf}, \eqref{stimadeltau1} and \eqref{stimadeltau2} we conclude that $(-\Delta)^s u\in L^p(\R{})$, hence $u\in \tilde H^{2s,p}(I)$, as wished.

\medskip

\noindent\emph{Step 3: Conclusion.}
Recalling that $(-\Delta)^s u=f$ in $I$, from \eqref{eqgreenb} we have for $x\in I$
\begin{equation}\label{uu1u2}
\begin{split}
u(x)&=\int_{I}G_s(x,y)f(y)dy\\
&=\frac{1}{2\tau(2\alpha_p)^\frac{p-1}{p}}\int_{r<|y|<\frac12} \frac{1}{|x-y|^{1-\frac{1}{p}}|y|^{\frac{1}{p}}}dy-\int_{r<|y|<\frac12} H_s(x,y)f(y)dy\\
&=:u_1(x)+u_2(x),
\end{split}
\end{equation}
where $H_s(x,y)$ is as in Lemma \ref{lemmagreen3}.

We now want a lower bound for $u$ in the interval $[-r,r]$. We fix $0<x\le r$ and estimate
\[\begin{split}
u_1(x)&=\frac{1}{2\tau(2\alpha_p)^\frac{p-1}{p}}\left( \int_r^\frac12 \frac{dy}{(y-x)^{1-\frac1p}y^{\frac1p}}+\int^{-r}_{-\frac12} \frac{dy}{|y-x|^{1-\frac1p}|y|^{\frac1p}}\right)\\
&\ge \frac{1}{2\tau(2\alpha_p)^\frac{p-1}{p}}\left(\int_r^\frac12 \frac{dy}{y}+\int^{\frac 12}_{r}\frac{dy}{y+x}\right)\\
&=\frac{1}{2\tau(2\alpha_p)^\frac{p-1}{p}}\left(2\tau+\log\left(\frac{1+2x}{1+\frac{x}{r}}\right) \right)\\
&=\frac{1}{(2\alpha_p)^\frac{p-1}{p}}+O(\tau^{-1}).
\end{split}\]
Since $H_s$ is bounded on $[-r,r]\times [-\tfrac12,\tfrac12]$, we have
$$|u_2(x)|\leq C \int_{r}^{\frac12} f(y)dy \leq C\tau^{-1} \int_{0}^{\frac12} |y|^{-\frac{1}{p}}dy=O(\tau^{-1}),\quad x\in [-r,r].$$
Then
$$u=u_\tau \ge  \frac{1}{(2\alpha_p)^\frac{p-1}{p}}+O(\tau^{-1})\quad \text{on }[-r,r],$$
as $\tau\to\infty$. We now set
$$w_\tau:=(2\tau)^{\frac{p-1}{p}}u_\tau\in \tilde H^{\frac{1}{p},p}(I),$$
so that $\|(-\Delta)^s w_\tau\|_{L^p(I)}=1$,
we compute 
\[
\int_I  e^{\alpha_p  |w_\tau|^{p'}}dx\ge\int_{-r}^r e^{\tau+O(1)} dx\ge \frac{2r e^\tau}{C} =\frac{1}{C},
\]
and using that $\inf_{[-r,r]} w_\tau \to\infty$ as $\tau\to\infty$, we conclude
$$\lim_{\tau\to\infty } \int_I h(w_\tau)  e^{\alpha_p  |w_\tau|^{p'}}dx =\infty,$$
whenever $h$ satisfies $\lim_{t\to\infty} h(t)=\infty.$
\hfill$\square$

\subsection{A few consequences of Theorem \ref{MT2}}

\begin{lemma}\label{Lp}
Let $u\in H$. Then $u^{q}e^{pu^{2}}\in L^{1}(I)$ for every $p,q>0$. 
\end{lemma}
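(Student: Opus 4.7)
The strategy is to split the task into two pieces: (a) show that the purely exponential quantity $e^{\beta u^2}$ lies in $L^1(I)$ for \emph{every} $\beta > 0$; (b) absorb the polynomial factor $u^q$ into the exponential. Part (b) is elementary: for each $\delta > 0$ one has the pointwise bound $|t|^q \leq C(q,\delta)\,e^{\delta t^2}$ for all $t \in \R{}$, since $q\log|t|/t^2 \to 0$ as $|t| \to \infty$. Applying this with $t = u(x)$ yields $u^q e^{pu^2} \leq C\,e^{(p+\delta)u^2}$ pointwise on $I$, so Lemma \ref{Lp} reduces to part (a).

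For part (a), a naive use of Theorem \ref{MT2} (with $p=2$ and the normalization $w = u/\|u\|_H$) only produces $\int_I e^{\alpha u^2}\,dx < \infty$ in the range $\alpha \le \pi/\|u\|_H^2$, which is insufficient when $\beta$ is large. The remedy is to exploit the fact that the ``bad part'' of $u$ for large $\beta$ is arbitrarily small in $H$-norm: by density of $C_c^\infty(I)$ in $\tilde H^{\frac12,2}(I)$ (Lemma \ref{lemmadens}), for any prescribed $\beta > 0$ I would choose $\varphi \in C_c^\infty(I)$ with
\[
\|u-\varphi\|_H^2 < \frac{\pi}{2\beta}.
\]
Writing $v := u - \varphi$ and using $u^2 \leq 2\varphi^2 + 2v^2$, one gets
\[
e^{\beta u^2} \leq e^{2\beta \|\varphi\|_{L^\infty(I)}^2}\, e^{2\beta v^2} = M\, e^{2\beta v^2},
\]
where $M < \infty$ because $\varphi$ is smooth and compactly supported.

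It remains to bound $\int_I e^{2\beta v^2}\,dx$. Assuming $v \not\equiv 0$ (otherwise the conclusion is immediate), set $w := v/\|v\|_H$, so that $\|w\|_H = 1$ and in particular $\|(-\Delta)^{\frac14}w\|_{L^2(I)} \leq \|w\|_H = 1$. The choice of $\varphi$ guarantees
\[
2\beta v^2 = 2\beta\|v\|_H^2\, w^2 < \pi\, w^2,
\]
hence Theorem \ref{MT2} (with $p=2$, $\alpha_2 = \pi$) applied to $w$ yields $\int_I e^{\pi w^2}\,dx \leq C|I| + |I| < \infty$, whence $\int_I e^{\beta u^2}\,dx \leq M\int_I e^{\pi w^2}\,dx < \infty$. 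Combining with part (b) proves the lemma. The only delicate point is the opening observation that the ``test'' norm $\|(-\Delta)^{\frac14}w\|_{L^2(I)}$ in Theorem \ref{MT2} is controlled by the full norm $\|w\|_H = \|(-\Delta)^{\frac14}w\|_{L^2(\R{})}$, which is what makes the approximation-and-normalization argument go through at the critical exponent.
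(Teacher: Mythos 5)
Your proof is correct, and it rests on the same two pillars as the paper's: the reduction of the polynomial factor via $|t|^q\le C(q,\delta)e^{\delta t^2}$, and the approximation of $u$ by some $\varphi\in C^\infty_c(I)$ (Lemma \ref{lemmadens}) whose remainder $v=u-\varphi$ has $H$-norm small enough that Theorem \ref{MT2}, applied to $v/\|v\|_H$ with $\alpha_2=\pi$, controls the exponential of the remainder; your observation that $\|(-\Delta)^{\frac14}w\|_{L^2(I)}\le\|w\|_H$ is exactly the point that makes this legitimate. The only difference is in the algebraic decomposition: you use the crude bound $u^2\le 2\varphi^2+2v^2$, absorbing $e^{2\beta\|\varphi\|_{L^\infty}^2}$ into a constant and needing only one application of Theorem \ref{MT2}, whereas the paper keeps the cross term, writing $u^2\le(v-u)^2+2vu$, estimates $e^{2pvu}$ by Young's inequality (splitting into a bounded factor involving $\varphi^2$ and a factor $e^{\varepsilon(u/\|u\|_H)^2}$ controlled by Theorem \ref{MT2}), and concludes with H\"older's inequality. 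Your route loses a harmless factor $2$ in the exponent, which costs nothing since $\beta$ is arbitrary and the approximation can be taken as fine as needed, so it is a marginally simpler but equally valid variant of the same argument.
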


\begin{proof}
Since $|u|^q\le C(q)e^{|u|^{2}}$, it is enough to prove the case $q=0$. Given $\varepsilon>0$ (to be fixed later), by Lemma \ref{lemmadens} there exists $v\in C^{\infty}_{c}(I)$ such that  
$$\|v-u\|_{H}^{2}<\varepsilon.$$
Using 
$$u^2\leq (v-u)^2+2vu$$
we bound
\begin{equation}\label{eqlemmaLp}
e^{p u^2}\leq e^{p(v-u)^2}e^{2pvu}.
\end{equation}
Using the inequality $|ab|\leq \frac{1}{2}(a^{2}+b^{2})$ we have

$$e^{2puv}\leq e^{\frac{1}{\varepsilon}p^{2}\|u\|_{H}^{2}v^{2}}e^{\varepsilon(\frac{u}{\|u\|_{H}})^{2}},$$
and for $\ve$ small enough the right-hand side is bounded in $L^2(I)$ thanks to Theorem \ref{MT2}. Still by Theorem \ref{MT2} we have $e^{p(v-u)^{2}}\in L^2(I)$ if $\ve>0$ is small enough, hence going back to \eqref{eqlemmaLp} and using that $v\in L^\infty(I)$ is now fixed, we conclude with H\"older's inequality that $e^{pu^2}\in L^1(I)$.
\end{proof}

\begin{lemma}\label{con}
For any $q,p\in (1,+\infty)$ the functional
\begin{equation*}
E_{q,p}: H \to \R{},\quad E_{q,p}(u):=\int_{I}|u|^{q}e^{pu^{2}}dx
\end{equation*}
is continuous.
\end{lemma}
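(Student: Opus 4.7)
The plan is to deduce continuity by combining a.e.\ convergence with uniform integrability, and then invoke Vitali's convergence theorem. Suppose $u_n\to u$ in $H$. By Remark~\ref{rm1} the norm $\|\cdot\|_H$ is equivalent to the full $H^{\frac12,2}(\R{})$-norm on $\tilde H^{\frac12,2}(I)$, so $u_n\to u$ in $H^{\frac12,2}(\R{})$. Since the $u_n$'s are uniformly bounded in $H^{\frac12,2}(\R{})$ and supported in $\bar I$, the compact embedding $\tilde H^{\frac12,2}(I)\hookrightarrow L^{r}(I)$ (valid for every $r<\infty$ in dimension one at the critical exponent) yields, along a subsequence, $u_n\to u$ a.e.\ on $I$ and in $L^{r}(I)$ for every $r\in[1,\infty)$. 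Continuity of $t\mapsto |t|^q e^{pt^2}$ then gives $|u_n|^q e^{pu_n^2}\to |u|^q e^{pu^2}$ a.e.

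Next I would check uniform integrability of $\{|u_n|^q e^{pu_n^2}\}$ on $I$. For a measurable $A\subset I$ and conjugate exponents $(s,s')$, Hölder gives
\begin{equation*}
\int_A |u_n|^q e^{pu_n^2}\,dx \le |A|^{1/s}\left(\int_I |u_n|^{qs'}e^{ps' u_n^2}\,dx\right)^{1/s'},
\end{equation*}
so it suffices to bound the last integral uniformly in $n$. Using the elementary inequality $u_n^2\le 2(u_n-u)^2+2u^2$ and a further Hölder estimate with three exponents $(a,b,c)$ summing reciprocally to $1$, I split
\begin{equation*}
\int_I |u_n|^{qs'}e^{ps' u_n^2}\,dx \le \|u_n\|_{L^{qs'a}(I)}^{qs'} \left(\int_I e^{2ps'b(u_n-u)^2}\,dx\right)^{1/b} \left(\int_I e^{2ps'c\, u^2}\,dx\right)^{1/c}.
\end{equation*}
The first factor is bounded by the $L^r$-convergence of the previous step, and the third is finite by Lemma~\ref{Lp}. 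For the middle factor, writing $v_n:=(u_n-u)/\|u_n-u\|_H$ so that $\|v_n\|_H=1$, the assumption $\|u_n-u\|_H\to 0$ guarantees that $2ps'b\|u_n-u\|_H^2\le \pi=\alpha_2$ eventually in $n$, and Theorem~\ref{MT2} then yields
\begin{equation*}
\int_I e^{2ps'b(u_n-u)^2}\,dx \le \int_I e^{\pi v_n^2}\,dx \le C|I|
\end{equation*}
for all $n$ large. This delivers the uniform integrability.

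Finally, a.e.\ convergence together with uniform integrability implies $E_{q,p}(u_n)\to E_{q,p}(u)$ along the chosen subsequence via Vitali's theorem, and the standard ``every subsequence has a sub-subsequence'' argument extends convergence to the full sequence, proving continuity of $E_{q,p}$. The main obstacle, or rather the key point, is the estimate of the middle factor: one must choose the Hölder exponent $b$ large enough to absorb both the polynomial term $|u_n|^{qs'}$ (via the first factor) and the fixed exponential $e^{2ps'c u^2}$ (via the third factor), while still ensuring that $2ps'b\|u_n-u\|_H^2$ lies below the sharp Moser--Trudinger threshold $\pi$. This is possible precisely because $\|u_n-u\|_H\to 0$, regardless of how large $b$ must be taken.
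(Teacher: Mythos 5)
Your proposal is correct and follows essentially the same route as the paper: both proofs split $u_n$ as $u$ plus a remainder that is small in $H$, control the exponential of the remainder via Theorem \ref{MT2} and the fixed part via Lemma \ref{Lp}, and conclude from a.e.\ convergence by a Vitali-type argument (the paper's Lemma \ref{lemmafkL}). The only cosmetic difference is that the paper deduces uniform integrability from an $L^2$-bound on $|u_n|^qe^{pu_n^2}$ plus a Chebyshev tail estimate, whereas you obtain it directly by H\"older over small sets.
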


\begin{proof}
Consider a sequence $u_k\to u$ in $H$. By Lemma \ref{Lp} (up to changing the exponents) we have that the sequence $f_k:= |u_k|^qe^{p u_k ^2}$ is bounded in $L^2(I)$. Indeed, it is enough to write $u_k=(u_k-u)+u$ and use the same estimates as in \eqref{eqlemmaLp} with $u$ instead of $v$ and $u_k$ instead of $u$. 
We now claim that $f_k\to f$ in $L^1(I)$. Indeed up to a subsequence $u_k\to u$ a.e., hence $f_k\to f:=|u|^qe^{pu^2}$ a.e. 
Then considering that since $f_k$ is bounded in $L^2(I)$ we have
$$\int_{\{f_k>L\}}f_k\, dx\leq \frac 1L \int_{\{f_k>L\}}f_k^2\, dx\leq \frac CL \to 0\quad\text{as $L\to+\infty$}, $$
the claim follows at once from Lemma \ref{lemmafkL}.
\end{proof}

\begin{lemma}\label{lemmaC1}
The functional $J: H\to\R{}$ defined in \eqref{defJ} is of class $C^\infty$.
\end{lemma}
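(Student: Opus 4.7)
The plan is to split $J=Q-\lambda F$, where $Q(u):=\tfrac12\|u\|_H^2$ is a continuous symmetric bilinear form (hence trivially $C^\infty$) and $F(u):=\int_I(e^{u^2/2}-1)\,dx$. The work is to prove $F\in C^\infty(H,\R{})$. Formally differentiating $k$ times under the integral, and using $\tfrac{d^k}{dx^k}e^{x^2/2}=P_k(x)e^{x^2/2}$ for the Hermite-type polynomial $P_k$ of degree $k$ ($P_0\equiv 1$, $P_1(x)=x$, $P_2(x)=1+x^2$, etc.), the candidate $k$-th Fr\'echet derivative is
\begin{equation*}
D^kF(u)[v_1,\dots,v_k]=\int_I P_k(u)\,v_1\cdots v_k\,e^{u^2/2}\,dx.
\end{equation*}
I would prove by induction on $k$ that this formula defines a bounded symmetric $k$-linear form, that the map $u\mapsto D^kF(u)$ is continuous from $H$ into the space of such forms, and that $D^{k-1}F$ is Fr\'echet differentiable at every $u$ with derivative $D^kF(u)$.

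For the $k$-linear bound I would apply H\"older with $k+1$ factors (for instance, $L^2(I)$ on $P_k(u)e^{u^2/2}$ and $L^{2k}(I)$ on each $v_i$) to obtain
\begin{equation*}
\bigl|D^kF(u)[v_1,\dots,v_k]\bigr|\le\|P_k(u)e^{u^2/2}\|_{L^2(I)}\prod_{i=1}^k\|v_i\|_{L^{2k}(I)}\le C(u)\prod_{i=1}^k\|v_i\|_H,
\end{equation*}
where $\|P_k(u)e^{u^2/2}\|_{L^2(I)}<\infty$ by Lemma~\ref{Lp} and the embedding $H\hookrightarrow L^{2k}(I)$ follows from \eqref{stimaMT2} via Taylor expansion of $e^{u^2/2}$. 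Continuity of $u\mapsto D^kF(u)$ then reduces to continuity of the superposition operator $w\mapsto P_k(w)e^{w^2/2}:H\to L^r(I)$ for every $r<\infty$: if $u_n\to u$ in $H$ then a subsequence converges a.e., so $P_k(u_n)e^{u_n^2/2}\to P_k(u)e^{u^2/2}$ a.e., while Lemma~\ref{con} yields convergence of the $L^r$-norms; the classical fact that a.e.\ convergence together with convergence of $L^r$-norms forces $L^r$-convergence (Radon--Riesz/Brezis--Lieb), combined with a subsequence-of-subsequence argument, upgrades this to $L^r$-convergence of the full sequence.

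For the Fr\'echet differentiability of $D^{k-1}F$, I would use the pointwise identity $P_k(x)e^{x^2/2}=\tfrac{d}{dx}[P_{k-1}(x)e^{x^2/2}]$ to write
\begin{equation*}
P_{k-1}(u+v)e^{(u+v)^2/2}-P_{k-1}(u)e^{u^2/2}=\int_0^1 P_k(u+tv)\,v\,e^{(u+tv)^2/2}\,dt,
\end{equation*}
multiply by $v_1\cdots v_{k-1}$, integrate over $I$ (justified by Fubini with the H\"older bound above), and subtract $D^kF(u)[v,v_1,\dots,v_{k-1}]$; the remainder is then bounded by $C\|v\|_H\prod_i\|v_i\|_H\cdot\omega(v)$, with $\omega(v):=\sup_{t\in[0,1]}\|P_k(u+tv)e^{(u+tv)^2/2}-P_k(u)e^{u^2/2}\|_{L^r(I)}\to 0$ as $v\to 0$ in $H$ by the continuity of the superposition operator just established. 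This closes the induction and gives $F\in C^\infty(H,\R{})$, whence $J\in C^\infty(H,\R{})$. The main technical obstacle is precisely this continuity of $w\mapsto P_k(w)e^{w^2/2}:H\to L^r(I)$: since no uniform integrable majorant is available one cannot simply invoke dominated convergence, and one must combine the exponential integrability (Lemma~\ref{Lp}), the nontrivial energy-continuity provided by Lemma~\ref{con}, and the Radon--Riesz argument.
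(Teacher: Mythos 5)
Your proposal follows essentially the same route as the paper, which simply invokes Lemma~\ref{con} and refers to Struwe's Lemma 2.1 in \cite{str} for the $C^1$ case, noting the higher differentials are of the same form (the exponential multiplied by polynomials); your induction with the Hermite-type polynomials $P_k$ and the Fr\'echet remainder via the integral mean value formula is the standard way to flesh that sketch out. One small imprecision: Lemma~\ref{con} as stated only gives convergence of the scalar quantities $\int_I|u_n|^qe^{pu_n^2}\,dx$, which does not directly yield convergence of the $L^r$-norms of $P_k(u_n)e^{u_n^2/2}$ (the polynomial has signed, odd-degree terms); the cleaner and more faithful move is to re-run the \emph{proof} of Lemma~\ref{con} on $g_n:=P_k(u_n)e^{u_n^2/2}$, showing $|g_n-g|^r$ is bounded in $L^2(I)$ and tends to $0$ a.e., whence Lemma~\ref{lemmafkL} gives $g_n\to g$ in $L^r(I)$ directly, with no need for Radon--Riesz/Brezis--Lieb.
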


\begin{proof} 
This follows easily from Lemma \ref{con}, since the first term on the right-hand side of \eqref{defJ} is simply $\frac{1}{2}\|u\|_H^2$, and the derivatives of the second term are continuous thanks to Lemma \ref{con}. The details, at least to prove that $J\in C^1(H)$, are essentially as in the proof of Lemma 2.1 of \cite{str}. The higher-order differentials are handled in the same way since they have a similar form, with the non-linear term $e^{\frac12 u^2}$ just multiplied by polynomial terms.
\end{proof}

The following lemma is a fractional analog of a well-known result of P-L. Lions \cite{PLL}. 

\begin{lemma}\label{lemmaPLL} Consider a sequence $(u_k)\subset H$ with $\|u_{k}\|_H=1$ and $u_k\rightharpoonup u$ weakly in $H$, but not strongly (so that $\|u\|_H<1$).  Then if $u\not \equiv 0$,  $e^{\pi u_{k}^{2}}$ is bounded in $L^{p}$ for $1\le p<\tilde p:=(1-\|u\|^{2}_H)^{-1}$.
\end{lemma}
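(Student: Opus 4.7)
The strategy is to adapt P.-L. Lions' classical argument to the present fractional setting, with the sharp Moser-Trudinger inequality \eqref{stimaMT2} as the main analytic input. I would set $v_k := u_k - u$, so that $v_k \rightharpoonup 0$ in $H$. By weak convergence one has $(u_k, u)_H \to \|u\|_H^2$, and since $\|u_k\|_H = 1$, the quantity
$$a_k := \|v_k\|_H^2 = 1 - 2(u_k, u)_H + \|u\|_H^2$$
converges to $a := 1 - \|u\|_H^2 = \tilde p^{-1}$. Note that $a > 0$ precisely because, by hypothesis, $u_k$ does not converge to $u$ strongly.

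Next, I would apply \eqref{stimaMT2} to the rescaled functions $w_k := v_k \sqrt{2\pi/a_k}$, which satisfy $\|w_k\|_H^2 = 2\pi$. Since $w_k^2/2 = \pi v_k^2 / a_k$, this yields
$$\int_I e^{\pi v_k^2 / a_k}\, dx \le (C+1)|I|$$
for every $k$ sufficiently large. Because $a_k \to \tilde p^{-1}$, for any $\beta < \pi \tilde p$ we have $\beta \le \pi/a_k$ eventually, so $\int_I e^{\beta v_k^2}\, dx$ is bounded uniformly in $k$. This is the subcritical exponential integrability at the sharp level $\pi \tilde p$ for the concentration-free sequence $v_k$.

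Finally, to transfer this bound from $v_k$ to $u_k = v_k + u$, the plan is to invoke Young's inequality $2 v_k u \le \ve v_k^2 + \ve^{-1} u^2$ to obtain
$$\pi p\, u_k^2 \le \pi p (1+\ve) v_k^2 + \pi p (1 + \ve^{-1}) u^2,$$
and then apply Hölder's inequality with conjugate exponents $r, r'$ to the resulting product. Since $p < \tilde p$ means $p(1-\|u\|_H^2) < 1$, it is possible to choose $r > 1$ and $\ve > 0$ small enough so that $r p (1+\ve) (1 - \|u\|_H^2) < 1$, i.e.\ $r\pi p(1+\ve) < \pi \tilde p$. The factor containing $v_k$ is then controlled by the previous paragraph, while the factor containing $u$ is finite by Lemma \ref{Lp}. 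The only delicate point is the bookkeeping needed to fit $r$ and $\ve$ between the strict inequalities $p < \tilde p$ and $r p (1+\ve) < \tilde p$, but no substantive obstacle arises beyond this routine matching of parameters.
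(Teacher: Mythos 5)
Your proof is correct and follows essentially the same route as the paper's: both decompose $u_k^2$ around the weak limit $u$, absorb the cross term via Young's inequality, control the concentration piece $e^{c(u_k-u)^2}$ with the sharp Moser--Trudinger inequality (using $\|u_k-u\|_H^2\to 1-\|u\|_H^2$), control the fixed piece involving $u$ with Lemma \ref{Lp}, and combine via H\"older. The only cosmetic difference is that the paper keeps three factors $e^{\pi u^2}\cdot e^{-2\pi u(u-u_k)}\cdot e^{\pi(u-u_k)^2}$ and applies Young at the level of the middle factor, whereas you fold the cross term into two squared factors from the start.
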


\begin{proof}
We split
$$u_{k}^{2}=u^2 -2u(u-u_{k})+(u-u_{k})^{2}.$$
Then $v_{k}:=e^{\pi u_{k}^{2}}=vv_{k,1}v_{k,2}$, where $v=e^{\pi|u|^{2}} \in L^{p}(I)$  for all $p\geq 1$ by Lemma \ref{Lp}, $v_{k,1}=e^{-2\pi u(u-u_{k})}$ and $v_{k,2}=e^{\pi (u-u_{k})^{2}}$.\\
Notice now that from 
$$ -2p\pi u(u-u_{k})\leq \pi \left(\frac{p^{2}}{\varepsilon ^{2}}u^{2}+\varepsilon^{2} (u-u_{k})^{2}\right),$$
we get from Lemma \ref{Lp} and Theorem \ref{MT2} that $v_{k,1}\in L^{q}(I)$ for all $q\geq 1$ if $\ve>0$ is small enough (depending on $q$). But again from Theorem \ref{MT2} $v_{2,k}$ is bounded in $ L^{p}(I)$ for all $p<\tilde p$ since
$$\|u_{k}-u\|^{2}_H=1-2\langle u_{k},u\rangle+\|u\|_H^{2}\to 1-\|u\|_H^{2}.$$
Therefore by H\"older's inequality we have that $v_{k}$ is bounded in $L^{p}(I)$ for all $p<\tilde p$.
\end{proof}

\section{Proof of Proposition \ref{trmex}}\label{PSC}

For the proof of Proposition \ref{trmex} we will closely follow \cite{A}.
Set
\begin{equation}\label{defI}
Q(u):=J(u)-\frac{1}{2}\langle J'(u),u\rangle=\lambda\int_{I} \left(\left(\frac{u^{2}}{2}-1\right)e^{\frac{1}{2} u^{2}}+1\right) dx.
\end{equation}
\begin{rmk}\label{rmkI} Notice that the integrand on the right-hand side of \eqref{defI} is strictly convex and has a minimum at $u=0$; in particular
\begin{equation}\label{minI}
0=Q(0)<Q(u) \quad \text{for every }u\in H\setminus\{0\}.
\end{equation}
Furthermore by Lemma \ref{con} the functional $Q$ is continuous on $H$ and by convexity $Q$ is also weakly lower semi-continuous.
\end{rmk}

Let us also notice that
\begin{equation*}
\begin{split}
\lambda\int_{I}u^2e^{\frac{1}{2}u^2}\, dx&=\lambda\int_{\{|u|\le 4\}}u^2e^{\frac{1}{2}u^2}\, dx+\lambda\int_{\{|u|>4\}}u^2e^{\frac{1}{2}u^2}\, dx\\&\le C+\lambda\int_{\{|u>4|\}}u^2e^{\frac{1}{2}u^2}\, dx\le C+\tilde C Q(u)
\end{split}
\end{equation*}
and hence we have 
\begin{equation}\label{ineq1}
\lambda\int_{I} u^{2}e^{\frac{1}{2} u^{2}}dx \leq C(1+Q(u))\quad \text{for every }u\in H.
\end{equation}

We consider a Palais-Smale sequence $(u_{k})_{k\geq 0}$ with $J(u_{k})\to c$. From \eqref{condPS} we get
$$\langle J'(u_k),u_k\rangle = o(1)\|u_k\|_{H}\quad \text{as } k\to\infty,$$
and
\begin{equation}\label{Ibound}
Q(u_{k})= J(u_k)-\frac{1}{2}\langle J'(u_k),u_k\rangle = c+o(1)+ o(1)\|u_k\|_{H}.
\end{equation}
Then with \eqref{ineq1} we have
$$\lambda\int_{I} u_{k}^{2}e^{\frac{1}{2} u^{2}_{k}}dx\le C\left(1+\|u_{k}\|_{H} \right),$$
hence, using that $Q(u_k)\ge 0$
$$\lambda\int_{I} \left(e^{\frac{1}{2}u_{k}^{2}}-1\right) dx \le C\left(1+\|u_{k}\|_{H} \right),$$
so that
$$J(u_k) \ge \frac{1}{2}\|u_k\|_{H}^2 - C (1+\|u_k\|_{H}).$$
This and the boundedness of $(J(u_k))_{k\ge 0}$ yield that the sequence $(u_{k})_{k\geq 0}$ is bounded in $H$, hence we can extracts a weakly converging subsequence $u_k\rightharpoonup \tilde u$ in $H$.  By the compactness of the embedding $H\hookrightarrow L^2$ (see e.g. \cite[Theorem 7.1]{DNPV}, which we can apply thanks to \cite[Proposition 3.6]{DNPV}, see Proposition \ref{HW}), up to extracting a further subsequence we can assume that $u_k\to \tilde u$ almost everywhere. To complete the proof of the theorem it remains to show that, up to extracting a further subsequence, $u_k\to \tilde u$ strongly in $H$.

\medskip

By Remark \ref{rmkI} we have
\begin{equation}\label{limIk}
0\leq Q(\tilde u) \leq \liminf_{k\to \infty} Q(u_{k})
= \liminf_{k\to \infty} \left(J(u_{k})-\frac{1}{2}\langle J'(u_{k}),u_{k}\rangle\right)
=c
\end{equation}
Thus necessarily $c\ge 0$. In other words the Palais-Smale condition is vacantly true when $c<0$ because no sequence can satisfy \eqref{condPS}.

\medskip

Clearly \eqref{limIk} implies $Q(u_{k})\to Q(\tilde u)=0$. We now claim that
\begin{equation}\label{convL1}
u_{k}^pe^{\frac{1}{2}u_{k}^{2}}\to \tilde u^p e^{\frac{1}{2}\tilde u^2}\quad \text{in } L^{1}(I)\quad \text{for }0\le p<2.
\end{equation}
Indeed, up to extracting a further subsequence, from \eqref{ineq1} and \eqref{limIk} we get
$$\int_{\{|u_{k}|>L\}}u_{k}^p e^{\frac{1}{2}u_{k}^{2}}dx \leq \frac{1}{L^{2-p}}\int_{\{|u_{k}|>L\}}u^{2}_{k}e^{\frac{1}{2}u_{k}^{2}}dx=O\left(\frac{1}{L^{2-p}}\right),$$
and \eqref{convL1} follows from Lemma \ref{lemmafkL} in the appendix. 

Let us now consider the case $c=0$. 
Since $Q(\tilde u)=0$, hence $\tilde u\equiv 0$, with \eqref{convL1} we get
\begin{equation}\label{PS1}
\lim_{k\to\infty} \|u_{k}\|_{H}^{2}=2\lim_{k\to\infty}\left( J(u_{k})+\lambda\int_{I}\left(e^{\frac{1}{2}u_{k}^{2}}-1\right) dx\right)
=2\lambda\int_{I}\left(e^{\frac{1}{2}\tilde{u}^{2}}-1\right) dx=0,
\end{equation}
so that $u_k\to 0$ is $H$ and the Palais-Smale condition holds in the case $c=0$ as well.

\medskip

The last case is when $c\in(0, \pi )$. We will need the following result which is analogue to Lemma 3.3 in \cite{A}.
\begin{lemma}\label{comp}
Consider a bounded sequence $(u_k)\subset H$ such that $u_{k}$ converges weakly and almost everywhere to a function $u\in H$. Further assume that:
\begin{enumerate}
\item there exists $c \in (0, \pi]$ such that $J(u_{k})\to c$;
\item $\|u\|_{H}^{2}\geq \lambda\int_{I} u^{2}e^{\frac{1}{2}u^2} dx$;
\item $\sup_{k} \int_{I}u_{k}^{2}e^{\frac{1}{2}u_{k}^2}dx <\infty$;
\item either $u\not\equiv 0$ or $c<\pi$.
\end{enumerate}
Then $$\lim_{k\to \infty} \int_{I}u_{k}^{2}e^{\frac{1}{2}u_{k}^2}dx = \int_{I}u^{2}e^{\frac{1}{2}u^2}dx.$$
\end{lemma}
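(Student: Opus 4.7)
The plan is to deduce the conclusion from Vitali's convergence theorem applied to $\{u_k^2 e^{u_k^2/2}\}_k$ on $I$. Up to a subsequence, Rellich's compact embedding (Proposition \ref{HW}) gives $u_k \to u$ a.e., hence $u_k^2 e^{u_k^2/2} \to u^2 e^{u^2/2}$ a.e.; combined with the uniform $L^1$-bound from hypothesis (3), this reduces the problem to equi-integrability of $\{u_k^2 e^{u_k^2/2}\}$. By the Sobolev embedding $H \hookrightarrow L^p(I)$ for every $p < \infty$ and H\"older's inequality, equi-integrability will follow from the existence of some $q>1$ with
\[
\sup_k \int_I e^{q u_k^2/2}\, dx < \infty. \qquad (\ast)
\]

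Set $v_k := u_k-u$, so $v_k \rightharpoonup 0$ in $H$ and, along the same subsequence, $v_k \to 0$ a.e.\ and in every $L^p(I)$. The Brezis-Lieb identity for the Hilbert norm gives $\|u_k\|_H^2 = \|u\|_H^2 + \|v_k\|_H^2 + o(1)$; passing to a further subsequence I may assume $\|v_k\|_H^2 \to \beta \in [0,\infty)$. The central claim is $\beta < 2\pi$. Once it is granted, I write $u_k^2 = v_k^2 + 2uv_k + u^2$, estimate $2|uv_k| \leq \delta v_k^2 + u^2/\delta$ by Young's inequality, and obtain
\[
e^{q u_k^2/2} \leq e^{q(1+\delta) v_k^2/2}\cdot e^{q(1+1/\delta)u^2/2}.
\]
Choosing $\delta>0$ small and $q>1$ close enough to $1$ that $q(1+\delta)\beta < 2\pi$, Theorem \ref{MT2} applied to $v_k$ bounds the first factor uniformly in $L^1(I)$, while Lemma \ref{Lp} places the second factor in every $L^p(I)$; a final application of H\"older's inequality yields $(\ast)$.

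The main obstacle is the verification of $\beta<2\pi$, which I would prove by contradiction. If $\beta \geq 2\pi$, hypothesis (1) and Brezis-Lieb together yield
\[
\lambda \lim_k \int_I (e^{u_k^2/2}-1)\, dx = \tfrac12(\|u\|_H^2+\beta)-c,
\]
while Fatou's lemma produces the bound $\lambda \int_I (e^{u^2/2}-1)\, dx \leq \tfrac12(\|u\|_H^2+\beta)-c$, hence $J(u) \geq c-\beta/2$. On the other hand, combining hypothesis (2) with the strict pointwise inequality $e^{t^2/2}-1 < (t^2/2)e^{t^2/2}$ on $\{t\ne 0\}$ yields $J(u) \geq 0$, with $J(u)>0$ as soon as $u\not\equiv 0$. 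To close the contradiction against hypothesis (4) one needs the matching upper bound $J(u) \leq c-\beta/2$; equivalently, one needs to upgrade the Fatou inequality to an equality. This is a concentration-compactness statement asserting that the excess mass $\beta$ carried by $v_k$ does not contribute to $\int_I (e^{u_k^2/2}-1)\, dx$ in the limit — a statement compatible with Theorem \ref{compact}, in which a one-bubble concentration of profile $\eta_\infty$ obeys $r_k m_k^2 e^{m_k^2/2}=O(1)$ and therefore contributes only $O(r_k e^{m_k^2/2}) = O(m_k^{-2})$ to $\int_I (e^{u_k^2/2}-1)\, dx$. Once this equality is in place, $J(u)=c-\beta/2$ combined with $J(u)>0$ when $u\ne 0$ forces $c>\pi$, while $u\equiv 0$ forces $c=\beta/2\geq\pi$; either conclusion contradicts hypothesis (4).
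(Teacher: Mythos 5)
Your route---Brezis--Lieb decomposition $u_k = u + v_k$, the claim $\beta := \lim_k\|v_k\|_H^2 < 2\pi$, and a direct application of the fractional Moser--Trudinger inequality to $v_k$ to obtain $(\ast)$---is a legitimate variant of the paper's argument, which instead normalizes $w_k := u_k/\|u_k\|_H$ and cites the Lions-type concentration lemma (Lemma~\ref{lemmaPLL}). The two are morally the same: the proof of Lemma~\ref{lemmaPLL} performs exactly your decomposition $u_k^2 = u^2 - 2u(u-u_k) + (u-u_k)^2$, so you are unwinding the paper's black box by hand. Your reduction to $(\ast)$ via Vitali/equi-integrability is also fine, though you do not need the Sobolev embedding: the one-line bound $\int_{\{|u_k|>L\}}u_k^2e^{u_k^2/2}\,dx \le \bigl(\sup_{t>L}t^2e^{-(q-1)t^2/2}\bigr)\int_I e^{qu_k^2/2}\,dx$, with $q>1$, already gives uniform integrability.

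The genuine gap is the proof of $\beta<2\pi$. You correctly identify that you need the equality $\lim_k\lambda\int_I(e^{u_k^2/2}-1)\,dx = \lambda\int_I(e^{u^2/2}-1)\,dx$ rather than the one-sided Fatou bound, but your appeal to Theorem~\ref{compact} cannot close it: that theorem concerns sequences of \emph{solutions} of the Euler--Lagrange equation \eqref{eq0k}, and nothing in the hypotheses of Lemma~\ref{comp} makes $(u_k)$ a solution sequence, so the blow-up profile analysis is simply unavailable. The correct---and much more elementary---way to get the equality is hypothesis (3): since
$$\int_{\{|u_k|>L\}}\bigl(e^{u_k^2/2}-1\bigr)\,dx \le \frac{1}{L^{2}}\int_I u_k^2 e^{u_k^2/2}\,dx = O(L^{-2})$$
uniformly in $k$, Lemma~\ref{lemmafkL} together with the a.e.\ convergence $u_k\to u$ (which is part of the hypotheses) gives $e^{u_k^2/2}-1 \to e^{u^2/2}-1$ in $L^1(I)$; this is precisely the paper's \eqref{convL1} with $p=0$. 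Once this is in hand, $J(u)=c-\beta/2$ follows by the computation you already wrote, hypothesis (2) gives $J(u)\ge Q(u)\ge 0$ with strict inequality when $u\not\equiv 0$, and the contradiction with hypothesis (4) establishes $\beta<2\pi$. In short, the missing ingredient was available from hypothesis (3) all along, and no blow-up analysis is needed.
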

\begin{proof}
We assume $u\not \equiv 0$ (if $u\equiv 0$ and $c<\pi$ the existence of $\ve>0$ in \eqref{1+eps} below is obvious). We then have $Q(u)>0$. On the other hand from assumption 2 we get
$$J(u)=\frac{1}{2}\|u\|_{H}^{2}+Q(u)-\frac\lambda2 \int_{I} u^{2}e^{\frac{1}{2}u^2} dx\geq Q(u)>0.$$
We also know from the weak convergence of $u_k$ to $u$ in $H$, the weakly lower semicontinuity of the norm and \eqref{convL1}  that
$$J(u)\le \lim_{k\to\infty} J(u_k)=c,$$
where the inequality is strict, unless $u_k\to u$ strongly in $H$ (in which case the proof is complete).
Then one can choose $\varepsilon>0$ so that 
\begin{equation}\label{1+eps}
 \frac{1+2\varepsilon}{\pi}<\frac{1}{c-J(u)}.
\end{equation}
Notice now that if we set $\beta=\lambda\int_{I} \left(e^{\frac{1}{2}u^{2}}-1\right)dx$, then
$$\lim_{k\to\infty}\|u_{k}\|_{H}^{2}=2c+2\beta.$$
Then multiplying \eqref{1+eps} by $\frac12 \|u_k\|_{H}^2$ we have for $k$ large enough
$$\frac{1+\varepsilon}{2\pi}\|u_{k}\|_{H}^{2}\le \tilde p:= \frac{1+2\varepsilon}{2\pi}\lim_{k\to\infty}\|u_{k}\|_{H}^{2}  <  \frac{c+\beta}{c-J(u)}=\left(1-\frac{\|u\|_{H}^{2}}{2(c+\beta)}\right)^{-1}.$$
By Lemma \ref{lemmaPLL} applied to $v_k:=\frac{u_{k}}{\|u_{k}\|_{H}}$, we get that the sequence $\exp(\tilde p\pi v_k^2)$ is bounded in $L^{1}(I)$, hence $e^{\frac{(1+\varepsilon)}{2}u^{2}_{k}}$ is bounded in $L^{1}$.\\
Now we have that 
\[
\begin{split} 
\int_{\{|u_{k}|>K\}}u_{k}^{2}e^{\frac{1}{2}u_{k}^{2}}dx&=\int_{\{|u_{k}|>K\}}\left(u_{k}^{2}e^{-\frac{\varepsilon}{2}u_{k}^{2}}\right) e^{\frac{1+\varepsilon}{2}u_{k}^{2}}dx\\
&\leq o(1) \int_{\{|u_{k}|>K\}} e^{\frac{1+\varepsilon}{2}u_{k}^{2}}dx
\end{split}
\]
with $o(1)\to 0$ as $K\to\infty$, and we conclude with Lemma \ref{lemmafkL}.
\end{proof}

We now claim 
\begin{equation}\label{tildeuN}
\|\tilde u\|_{H}^2=\lambda\int_I \tilde u^2e^{\frac12 \tilde u^2}dx.
\end{equation}
First we show that $\tilde{u}\not\equiv 0$. So for the sake of contradiction, we assume that $\tilde{u}\equiv 0$. By Lemma \ref{comp}
$$\lim_{k\to \infty} \int_{I}u_{k}^{2}e^{\frac{1}{2}u_{k}^{2}}dx=0.$$
Therefore, also using \eqref{convL1}, we obtain $\lim_{k\to \infty} Q(u_{k})=0$. It follows that
$$0<c=\lim_{k\to\infty} J(u_{k})=\lim_{k\to\infty} \left (Q(u_{k})+\frac{1}{2}\langle J'(u_{k}),u_{k}\rangle\right)=0,$$
contradiction, hence $\tilde{u}\not \equiv 0$.

Fix now $\varphi \in C^{\infty}_{0}(I)\cap H$. We have $\langle J'(u_{k}),\varphi\rangle\to 0$ as $k\to \infty$, since $(u_k)$ is a Palais-Smale sequence. But, by weak convergence we have that 
$$(u_k,\varphi)_H \to (\tilde u,\varphi)_H.$$
Now \eqref{convL1} implies 
$$\int_{I}\varphi u_{k}e^{\frac{1}{2}u_{k}^{2}}dx \to \int_{I}\varphi \tilde{u}e^{\frac{1}{2}\tilde{u}^{2}}dx,\quad \text{for every }\varphi \in C^{\infty}_{0}(I).$$
Thus we have  
$$(\tilde u,\varphi)_H=\lambda\int_{I}\varphi \tilde{u}e^{\frac{1}{2}\tilde{u}^{2}}dx.$$
By density and the fact that $\tilde{u}e^{\frac{1}{2}\tilde{u}^{2}}\in L^{p}$ for all $p\geq 1$, we have that
$$(\tilde u,\tilde u)_H=\lambda \int_{I} \tilde{u}^{2}e^{\frac{1}{2}\tilde{u}^{2}}dx,$$
hence \eqref{tildeuN} is proven. Therefore, we are under the assumptions of Lemma \ref{comp}, which yields
\begin{equation}
\begin{split}
\|\tilde{u}\|_{H}^{2}&\leq \liminf_{k\to\infty}\|u_{k}\|_{H}^{2}\\
&=2\liminf_{k\to\infty}\left[ J(u_{k})+\lambda\int_{I} \left(e^{\frac{1}{2}u_{k}^{2}}-1\right) dx\right] \\
&=2\liminf_{k\to\infty}\left[ \frac\lambda 2\int_{I}u_{k}^{2}e^{\frac{1}{2}u_{k}^{2}} dx+\frac{1}{2}\langle J'(u_{k}),u_{k}\rangle\right]\\
&=\lambda \int_{I}\tilde u^{2}e^{\frac{1}{2}\tilde u^{2}} dx\\
&=\|\tilde{u}\|_{H}^{2}.
\end{split}
\end{equation}
By Hilbert space theory the convergence of the norms implies that $u_k\to \tilde u$ strongly in $H$, and the Palais-Smale condition is proven.


\section{Proof of Theorem \ref{ex}}\label{CP}

We start by proving the last claim of Theorem \ref{ex}.

\begin{prop}\label{nonex}
Let $u$ be a non-negative non-trivial solution to \eqref{eq0} for some $\lambda\in \R{}$. Then $0<\lambda<\lambda_1(I)$.
\end{prop}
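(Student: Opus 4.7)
The plan is to obtain both bounds on $\lambda$ by testing the weak formulation \eqref{eq1} against two carefully chosen functions: $u$ itself for the lower bound, and the first eigenfunction of $(-\Delta)^{\frac12}$ on $I$ for the upper bound.

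For the lower bound $\lambda > 0$: I would choose $v = u \in H$ in \eqref{eq1} to obtain
\begin{equation*}
\|u\|_H^2 = (u,u)_H = \lambda \int_I u^2 e^{\frac12 u^2}\, dx.
\end{equation*}
Since $u$ is non-trivial, the left-hand side is strictly positive, and since $u \ge 0$ the integrand on the right-hand side is non-negative with strictly positive integral (both $u^2$ and $e^{\frac12 u^2}$ are positive on $\{u>0\}$, which has positive measure). Hence $\lambda > 0$. Note that in \eqref{eq1} with $v=u$ all integrals converge thanks to Lemma \ref{Lp}.

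For the upper bound $\lambda < \lambda_1(I)$: Let $\varphi_1 \in H$ denote the first eigenfunction of $(-\Delta)^{\frac12}$ on $H$ associated to $\lambda_1(I)$, which by Lemma \ref{phi1} in the appendix can be chosen strictly positive on $I$. I would test \eqref{eq1} against $v = \varphi_1$ to get
\begin{equation*}
(u,\varphi_1)_H = \lambda \int_I u \varphi_1 e^{\frac12 u^2}\, dx.
\end{equation*}
On the other hand, using that $\varphi_1$ is an eigenfunction (applied in its weak form, which is legitimate since $u \in H$), the left-hand side also equals $\lambda_1(I) \int_I u \varphi_1\, dx$. Equating the two expressions yields
\begin{equation*}
\lambda_1(I)\int_I u\varphi_1\, dx = \lambda \int_I u \varphi_1 e^{\frac12 u^2}\, dx.
\end{equation*}

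The conclusion follows from a strict-inequality argument: since $u \ge 0$ is non-trivial and $\varphi_1 > 0$ on $I$, we have $\int_I u\varphi_1\, dx > 0$, and moreover $e^{\frac12 u^2} \ge 1$ with strict inequality on $\{u>0\}$, which has positive measure. Therefore
\begin{equation*}
\int_I u \varphi_1 e^{\frac12 u^2}\, dx > \int_I u \varphi_1\, dx > 0,
\end{equation*}
and combining with the identity above and $\lambda > 0$ gives $\lambda < \lambda_1(I)$, as claimed. The only mild technical points are the integrability of $u\varphi_1 e^{\frac12 u^2}$, which is guaranteed by Lemma \ref{Lp} together with Hölder's inequality and $\varphi_1 \in L^\infty(I)$ (or directly by $\varphi_1 \in H \subset L^p(I)$ and $u e^{\frac12 u^2} \in L^q(I)$ for every $p,q$), and the admissibility of $\varphi_1$ as a test function in \eqref{eq1}, which holds because $\varphi_1 \in H$. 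I do not expect any serious obstacle here; the argument is a direct transcription of the classical Picone-type/eigenfunction-testing strategy to the fractional setting, which goes through once one has the weak formulation \eqref{eq1} and the positivity of $\varphi_1$.
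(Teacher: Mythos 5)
Your proof is correct and follows essentially the same route as the paper: testing the weak formulation with the positive first eigenfunction $\varphi_1$ to get $\lambda_1(I)\int_I u\varphi_1\,dx=\lambda\int_I u\varphi_1 e^{\frac12 u^2}\,dx$ and hence $\lambda<\lambda_1(I)$, and testing with $u$ itself to get $\lambda>0$. If anything, your write-up is slightly more careful than the paper's, since you make explicit that the strict comparison uses $\lambda>0$ and the positivity of $\int_I u\varphi_1\,dx$.
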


\begin{proof}
Let $\varphi_{1}\ge 0$ be as in Lemma \ref{phi1}. Then using $\varphi_1$ as a test function in \eqref{eq0} (compare to \eqref{eq1}) yields 
$$\lambda_{1}(I) \int_{I} u\varphi_{1}dx=\lambda \int_{I}u\varphi_{1}e^{\frac{1}{2}|u|^{2}}dx> \lambda \int_{I} u\varphi_1dx.$$
Hence $\lambda<\lambda_{1}$. Using $u$ as test function in \eqref{eq0} gives at once $\lambda>0$.
\end{proof}

The rest of the section is devoted to the proof of the existence part of Theorem \ref{ex}. 

\medskip

Define the Nehari manifold
$$N(J):=\left\{u\in H\setminus \{0\}; \langle J'(u),u\rangle=0\right\}.$$
Since, according to \eqref{defI}-\eqref{minI}, $J(u) = Q(u)>0$ for $u\in N(J)$, we have
$$a(J):=\inf_{u\in N(J)} J(u)\ge 0.$$

\begin{lemma}\label{lemmaaJ}
We have $a(J)>0$.
\end{lemma}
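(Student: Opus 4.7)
The key idea is to bound $J(u)=Q(u)$ from below by a multiple of $\|u\|_H^2$ on $N(J)$, and then to bound $\|u\|_H$ itself away from $0$ on $N(J)$. Both steps use the Nehari identity $\|u\|_H^2 = \lambda \int_I u^2 e^{\frac12 u^2}\, dx$ available on $N(J)$, the Poincar\'e inequality \eqref{poinc}, and the assumption $\lambda\in(0,\lambda_1(I))$ (the regime considered for the existence part of Theorem \ref{ex}).

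First I would prove the elementary pointwise inequality
$$t^{2}\bigl(e^{\frac12 t^{2}}-1\bigr) \leq 4 f(t) \quad \text{for all } t\in\R{}, \quad \text{where } f(t):=\Bigl(\tfrac{t^{2}}{2}-1\Bigr)e^{\frac12 t^{2}}+1,$$
which with the substitution $s=t^{2}/2\geq 0$ reduces to $g(s):=e^{s}(4-2s)-(2s+4)\leq 0$, a consequence of $g(0)=g'(0)=0$ and $g''(s)=-2se^{s}\leq 0$. Since $Q(u)=\lambda\int_I f(u)\,dx$ by \eqref{defI}, combining this pointwise bound with the decomposition $u^{2}e^{\frac12 u^{2}}=u^{2}+u^{2}(e^{\frac12 u^{2}}-1)$ and Poincar\'e turns the Nehari identity into
$$\|u\|_{H}^{2}\leq \frac{\lambda}{\lambda_{1}(I)}\|u\|_{H}^{2}+4 J(u),$$
whence
$$J(u)\geq \frac{\lambda_{1}(I)-\lambda}{4\lambda_{1}(I)}\|u\|_{H}^{2}\quad \text{for every }u\in N(J).$$

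It remains to show that $\|u\|_{H}$ is uniformly bounded from below on $N(J)$. The case $\|u\|_{H}^{2}\geq \pi$ is trivial, so assume $\|u\|_{H}^{2}\leq \pi$. Using $e^{x}-1\leq xe^{x}$ and H\"older,
$$\int_{I}u^{2}\bigl(e^{\frac12 u^{2}}-1\bigr)dx \leq \tfrac12 \int_I u^4 e^{\frac12 u^2}dx\leq \tfrac12 \|u\|_{L^{8}(I)}^{4}\Bigl(\int_{I}e^{u^{2}}dx\Bigr)^{\!1/2}.$$
The hypothesis $\|u\|_{H}^{2}\leq \pi$ and Theorem \ref{MT2} (applied to $u/\sqrt{\pi}$) bound the last integral by a universal constant, while the continuous embedding $H\hookrightarrow L^{8}(I)$ (itself a consequence of Theorem \ref{MT2} through Taylor expansion of $e^{\pi(u/\|u\|_H)^2}$) yields $\|u\|_{L^{8}(I)}^{4}\leq C\|u\|_{H}^{4}$. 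Plugging into the Nehari identity together with Poincar\'e gives
$$\Bigl(1-\frac{\lambda}{\lambda_{1}(I)}\Bigr)\|u\|_{H}^{2}\leq C\lambda\|u\|_{H}^{4},$$
so $\|u\|_{H}^{2}\geq \delta_{0}^{2}:=\min\{\pi,\,(1-\lambda/\lambda_{1}(I))/(C\lambda)\}>0$ on $N(J)$. Combining with the previous step concludes $a(J)\geq \delta_{0}^{2}(\lambda_{1}(I)-\lambda)/(4\lambda_{1}(I))>0$. The only delicate points I anticipate are fixing the explicit constant $4$ in the pointwise inequality of Step 1 and keeping the embedding $H\hookrightarrow L^{8}(I)$ used in Step 3 genuinely quantitative.
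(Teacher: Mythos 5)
Your proof is correct, but it takes a genuinely different route from the paper's. The paper argues by contradiction: assuming $a(J)=0$, it takes $(u_k)\subset N(J)$ with $J(u_k)=Q(u_k)\to 0$, uses \eqref{ineq1} and the Nehari identity to bound $\|u_k\|_H$, the weak lower semicontinuity of $Q$ and \eqref{minI} to show the weak limit vanishes, then proves $\|u_k\|_H\to 0$ strongly and normalizes $v_k=u_k/\|u_k\|_H$; passing to the limit in the Nehari identity (Theorem \ref{MT2} makes $e^{\frac12 u_k^2}$ bounded in every $L^q$ once $\|u_k\|_H\to 0$, and the compact Sobolev embedding gives $v_k\to v$ in $L^2$) yields $1=\lambda\int_I v^2dx<\lambda_1\int_I v^2dx\le 1$ via Poincar\'e, a contradiction. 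You instead argue directly and quantitatively: your pointwise convexity inequality $t^2(e^{\frac12 t^2}-1)\le 4\left[\left(\tfrac{t^2}{2}-1\right)e^{\frac12 t^2}+1\right]$ is correct (the reduction to $g(s)=e^s(4-2s)-(2s+4)\le 0$ with $g(0)=g'(0)=0$, $g''\le 0$ checks out), and combined with the Nehari identity and \eqref{poinc} it gives $J(u)\ge \frac{\lambda_1-\lambda}{4\lambda_1}\|u\|_H^2$ on $N(J)$; the Moser-Trudinger bound, H\"older and the embedding $H\hookrightarrow L^8(I)$ then bound $\|u\|_H$ away from zero on $N(J)$. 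Both arguments use $\lambda\in(0,\lambda_1(I))$ and Poincar\'e; yours buys an explicit lower bound for $a(J)$ in terms of $\lambda$, $\lambda_1(I)$ and $|I|$, while the paper's is shorter because it recycles machinery (\eqref{ineq1}, \eqref{PS1}, Remark \ref{rmkI}) already in place for the Palais-Smale analysis and needs no explicit constants. Two details worth stating explicitly in your write-up: Theorem \ref{MT2} is formulated with $\|(-\Delta)^{\frac14}u\|_{L^2(I)}$, which is dominated by $\|u\|_H=\|(-\Delta)^{\frac14}u\|_{L^2(\R{})}$, so applying it to $u/\sqrt{\pi}$ when $\|u\|_H^2\le\pi$ is legitimate (alternatively invoke \eqref{stimaMT2} with threshold $2\pi$); and since $u\in N(J)$ forces $u\not\equiv 0$, dividing by $\|u\|_H^2$ in your final step is allowed.
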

\begin{proof}
Assume that $a(J)=0$, then there exists a  sequence $(u_{k})\subset N(J)$ such that 
$$J(u_{k})=Q(u_k)\to 0\quad\text{ as $k\to \infty$,}$$ 
From (\ref{ineq1}) we infer
\begin{equation}\label{stimaquad}
\sup_{k\ge 0} \int_{I}u_{k}^{2}e^{\frac{1}{2}u_{k}^{2}}dx <\infty,
\end{equation}
which, again using the fact that $u_{k}\in N(J)$, implies that $\|u_{k}\|_{H}$ is bounded. Thus, up to extracting a subsequence, we have that $u_{k}$ weakly converges to a function $u\in H$. From the weak lower semicontinuity of $Q$ we then get
$$0\leq Q(u) \leq \liminf_{k\to\infty} Q(u_{k})=0,$$
thus $J(u)=Q(u)=0$ and \eqref{minI} implies $u\equiv 0$.
On the other hand, we have from \eqref{PS1} with $\tilde u$ replaced by $u$ (which holds with the same proof thanks to \eqref{stimaquad})
\begin{equation}\label{uk0}
\lim_{k\to \infty} \|u_{k}\|_{H}^{2}=2\lim_{k\to \infty} \left\{J(u_{k}) + \lambda \int_I \left(e^{\frac{1}{2}u_{k}^{2}}-1 \right) dx\right\} =0,
\end{equation}
therefore we have strong convergence of $u_{k}$ to $0$.

Now, if we let $v_{k}=\frac{u_{k}}{\|u_{k}\|_{H}}$ and up to a subsequence we assume $v_k\to v$ weakly in $H$ and almost everywhere, we have
\begin{equation}\label{1<1}
1=\|v_k\|_{H}^2=\lim_{k\to \infty} \lambda \int_{I}e^{\frac{1}{2}u_{k}^{2}}v_{k}^{2}dx = \lambda \int_{I} v^{2} dx < \lambda_{1}\int_{I} v^{2} dx \leq 1,
\end{equation}
where the third equality is justified as follows: From the Sobolev imbedding $v_{k}\to v$ in all $L^{p}(I)$ for every  $ p\in [1,\infty)$, while from $\eqref{uk0}$ and Theorem \ref{MT2} we have that for every $q\in [1,\infty)$ the sequence $(e^{\frac{1}{2}u_{k}^{2}} )$ is bounded in $L^q(I)$, hence from H\"older's inequality we have the desired limit. The last inequality in \eqref{1<1} follows from the Poincar\'{e} inequality, see \eqref{poinc}.

Clearly \eqref{1<1} is a contradiction, hence $a(J)>0$.
\end{proof}

\begin{lemma}\label{nihari}
For every $u\in H\setminus \{0\}$ there exists a unique $t=t(u)>0$ such that $t u \in N(J)$.
 Moreover, if 
\begin{equation}\label{eqnihari}
\|u\|_{H}^{2}\leq \lambda \int_{I} u^{2}e^{\frac{1}{2}u^{2}}dx,
\end{equation}
then $t(u) \leq 1 $ and $t(u)=1$ if and only if $u\in N(J)$.
\end{lemma}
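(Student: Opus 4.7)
The strategy is to reduce the statement to a one-variable calculus problem. Given $u\in H\setminus\{0\}$, I would introduce the function
$$\psi(t):=\|u\|_H^2-\lambda\int_I u^2 e^{\frac{1}{2}t^2u^2}\,dx,\qquad t\ge 0,$$
and observe that for $t>0$,
$$\langle J'(tu),tu\rangle=t^2\,\psi(t),$$
so that $tu\in N(J)$ if and only if $\psi(t)=0$. The task thus becomes proving that $\psi$ has a unique positive zero, and locating it relative to $t=1$.

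The plan is to establish three properties of $\psi$. First, I would evaluate $\psi(0)=\|u\|_H^2-\lambda\|u\|_{L^2}^2$ and show $\psi(0)>0$ using the Poincar\'e-type inequality \eqref{poinc} together with the standing assumption $\lambda<\lambda_1(I)$: indeed $\lambda\|u\|_{L^2}^2<\lambda_1(I)\|u\|_{L^2}^2\le\|u\|_H^2$. Second, I would show that $\psi$ is strictly decreasing on $(0,\infty)$ by differentiating under the integral sign to obtain
$$\psi'(t)=-\lambda t\int_I u^4 e^{\frac{1}{2}t^2u^2}\,dx<0,$$
since $u\not\equiv 0$. The differentiation is legitimate because Lemma \ref{Lp} guarantees $u^4 e^{\frac{1}{2}t^2u^2}\in L^1(I)$ locally uniformly in $t$, providing the integrable majorant needed for dominated convergence. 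Third, I would show $\psi(t)\to-\infty$ as $t\to\infty$: since $|\{u\ne 0\}|>0$, monotone convergence (applied on $\{|u|\ge\delta\}$ for small $\delta>0$) gives $\int_I u^2 e^{\frac{1}{2}t^2u^2}\,dx\to+\infty$. These three facts, combined with continuity of $\psi$ (another straightforward application of dominated convergence), yield by the intermediate value theorem a unique $t(u)\in(0,\infty)$ with $\psi(t(u))=0$.

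For the final claim, the hypothesis \eqref{eqnihari} reads exactly $\psi(1)\le 0=\psi(t(u))$. Since $\psi$ is strictly decreasing, this forces $t(u)\le 1$, with equality if and only if $\psi(1)=0$, i.e. $u\in N(J)$.

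The only mildly delicate step is the justification of differentiating under the integral and of the limits $t\to 0^+$ and $t\to\infty$, but these are routine once Lemma \ref{Lp} is invoked; no genuine obstacle arises.
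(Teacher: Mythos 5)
Your proof is correct and follows essentially the same route as the paper's: both reduce the problem to analyzing a one-variable function whose zero characterizes membership in $N(J)$, establish positivity near $t=0$ via the Poincar\'e inequality and $\lambda<\lambda_1(I)$, divergence to $-\infty$ as $t\to\infty$, and strict monotonicity to get uniqueness. The only cosmetic difference is that the paper works with $f(t)=t^2\psi(t)$ and deduces monotonicity from that of $t\mapsto\int_I u^2e^{\frac12 t^2u^2}\,dx$, whereas you divide by $t^2$ and differentiate $\psi$ directly; both are valid.
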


\begin{proof}
Fix $u\in H\setminus\{0\}$ and for $t\in (0,\infty)$ define the function
$$f(t)=t^{2}\left(\|u\|_{H}^{2}-\lambda \int_{I}u^{2}e^{\frac{1}{2}t^{2}u^{2}}dx\right),$$
which can also be written as
$$f(t)=t^{2}\left(\|u\|^{2}_{H}-\lambda\int_{I}u^{2}dx \right) -  t^{2}\lambda \int_{I}u^{2}\left(e^{\frac{1}{2}t^{2}u^{2}}-1\right)dx.$$
Notice that $tu\in N(J)$ if and only if $f(t)=0$. 

From the inequality $$u^{2}\left(e^{\frac{1}{2}t^{2}u^{2}}-1\right)\geq \frac 12 t^{2}u^{4}$$
we infer 
$$f(t)\leq t^{2}\left(\|u\|_{H}^{2}-\lambda \int_{I}u^{2}dx \right) - \frac 12 t^4\lambda \int_{I}u^{4}dx,$$
hence
$$\lim_{t\to +\infty} f(t)=-\infty.$$

Now notice that the function $t\mapsto\left(e^{\frac{1}{2}t^{2}u^{2}}-1\right)$ is monotone increasing on $(0,\infty)$, and by Lemma \ref{Lp} we have $\left(e^{\frac{1}{2}u^{2}}-1\right)\in L^{p}(I)$ for all $p\in [1,\infty)$, so that 
$$u^2\left(e^{\frac{1}{2}u^{2}}-1\right)\in L^{1}(I).$$ 
Then by the dominated convergence theorem we get
$$\lim_{t\to 0} \int_{I}u^{2}\left(e^{\frac{1}{2}t^{2}u^{2}}-1\right)dx=0.$$
So one has
$$f(t)=t^{2}\left(\|u\|_{H}^{2}-\lambda \int_{I}u^{2}dx\right) +o(t^{2})\quad \text{as }t\to 0.$$
Hence, $f(t)>0$  for $t$ small, since for $\lambda<\lambda_1(I)$
$$\|u\|_{H}^{2}-\lambda \int_{I}u^{2}dx >0$$
(compare the proof of Lemma \ref{phi1}).
Therefore there exists $t=t(u)$ such that $f(t)=0$, i.e. $tu\in N(J)$. The uniqueness of such $t$ follows noticing that the function
$$t\mapsto \int_{I} u^{2}e^{\frac{1}{2}t^{2}u^{2}}dx$$
is increasing. Keeping this in mind, if we assume \eqref{eqnihari}, then $f(1)\leq 0$, hence $f(t)\leq 0$ for all $t\geq 1$. This implies at once that $t(u)\leq 1$ 
and $t(u)=1$ if and only if $u\in N(J)$.
\end{proof}

\begin{lemma}\label{lemmaaJ2}
We have $a(J)<\pi$.
\end{lemma}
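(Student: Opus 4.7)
The plan is to argue by contradiction: assume $a(J)\ge\pi$ and derive an impossibility by testing with suitable Moser-type functions on the Nehari manifold. The key input is the sharpness statement \eqref{stimaMT2s} of Theorem \ref{MT2} with $p=2$ (so $\alpha_2=\pi$) and the weight $h(t)=t^2$, which yields a sequence $(w_k)\subset H$ with $\|w_k\|_H\le 1$ and
$$\int_I w_k^2\bigl(e^{\pi w_k^2}-1\bigr)\,dx \longrightarrow +\infty.$$
The Poincar\'e inequality \eqref{poinc} bounds $\|w_k\|_{L^2(I)}$, so the $-1$ is harmless and $\int_I w_k^2 e^{\pi w_k^2}\,dx \to +\infty$ as well.

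Next I would rescale. Setting $M_k := \sqrt{2\pi}\,w_k$, one has $\|M_k\|_H^2 \le 2\pi$ and
$$\int_I M_k^2\,e^{\frac12 M_k^2}\,dx = 2\pi\int_I w_k^2\,e^{\pi w_k^2}\,dx \longrightarrow +\infty.$$
By Lemma \ref{nihari} there exists $t_k>0$ with $t_k M_k \in N(J)$. The contradiction hypothesis $J(t_k M_k)\ge a(J)\ge \pi$, combined with the trivial estimate
$$J(t_k M_k) = \frac{t_k^2}{2}\|M_k\|_H^2 - \lambda\int_I\bigl(e^{\frac12 t_k^2 M_k^2}-1\bigr)\,dx \le \frac{t_k^2}{2}\|M_k\|_H^2 \le \pi t_k^2,$$
forces $t_k^2 \ge 1$. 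The Nehari identity $\|M_k\|_H^2 = \lambda\int_I M_k^2 e^{\frac12 t_k^2 M_k^2}\,dx$ together with $t_k^2\ge 1$ then gives
$$\lambda\int_I M_k^2\,e^{\frac12 M_k^2}\,dx \le \lambda\int_I M_k^2\,e^{\frac12 t_k^2 M_k^2}\,dx = \|M_k\|_H^2 \le 2\pi,$$
contradicting the divergence established above. Hence $a(J) < \pi$.

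The only delicate input is the availability of test functions for which $\int_I u^2 e^{\pi u^2}\,dx$ blows up while $\|u\|_H$ stays bounded by one; this is precisely \eqref{stimaMT2s} with $h(t)=t^2$, and as emphasized in Remark \ref{rmk1}, it is exactly this strong form of sharpness (unavailable for higher-order operators with the methods known to the authors) that makes the one-dimensional fractional argument succeed. Everything else is just the standard Nehari bookkeeping already carried out in Lemma \ref{nihari}, so no serious further obstacle arises.
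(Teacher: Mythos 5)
Your argument is correct and is essentially the paper's own proof arranged as a contrapositive: both rest on Lemma \ref{nihari}, the elementary bound $J(tu)\le \tfrac{t^2}{2}\|u\|_H^2$, the monotonicity of $t\mapsto\int_I u^2e^{\frac12 t^2u^2}dx$, the Nehari identity, and the sharpness statement \eqref{stimaMT2s} with $p=2$ and $h(t)=t^2$. The only caveat --- shared with the paper, which likewise passes from the constraint $\|(-\Delta)^{\frac14}w\|_{L^2(I)}\le 1$ appearing in \eqref{stimaMT2s} to the constraint $\|w\|_H\le 1$ without comment --- is that the test functions furnished by Theorem \ref{MT2} must be renormalized in the full norm $\|\cdot\|_H$, which costs only a factor $1+o(1)$ and does not affect the divergence you use.
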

\begin{proof}
Take $w\in H$ such that $\|w\|_{H}=1$ and let $t=t(w)$ be given as in Lemma \ref{nihari} so that $tw\in N(J)$. Then
$$a(J)\leq J(tw)\leq \frac{t^2}{2}\|w\|_H^2=\frac{t^2}{2}.$$
Now using the monotonicity of $t\mapsto \int_{I} w^{2}e^{\frac{1}{2}t^{2}w^{2}}dx$ we have
$$ \lambda \int_{I} w^{2}e^{a(J)w^{2}}dx\leq  \lambda \int_{I} w^{2}e^{\frac{1}{2}t^{2}w^{2}}dx=\frac{t^{2}\|w\|_H^{2}}{t^{2}}=1.$$
Thus
$$\sup_{\|w\|_H=1}\lambda \int_{I} w^{2}e^{a(J) w^{2}}dx \leq 1,$$
and Theorem \ref{MT2}  implies that $a(J)< \pi.$
\end{proof}

\begin{lemma}\label{lemmacrit}
Let $u\in N(J)$ be such that $J'(u)\not=0$, then $J(u)>a(J)$.
\end{lemma}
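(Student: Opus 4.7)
The plan is to use a standard Nehari-manifold projection argument: if $u\in N(J)$ is not a critical point, then by projecting a small perturbation of $u$ back onto $N(J)$ one obtains a competitor with strictly smaller energy, forcing $J(u)>a(J)$.

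More concretely, I would first exploit $J'(u)\neq 0$ to pick a direction $v\in H$ with $\langle J'(u),v\rangle<0$ (normalised to $-1$, say), and consider the curve $u_s:=u+sv$ for $s\ge 0$ small. Since $u\neq 0$ (because $u\in N(J)$) one has $u_s\not\equiv 0$ for small $s$, so Lemma \ref{nihari} supplies a unique $t_s=t(u_s)>0$ with $t_s u_s\in N(J)$, and $t_0=1$.

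The next step is to verify that $s\mapsto t_s$ is $C^1$ near $s=0$. Define
$$\Phi(s,t):=\langle J'(tu_s),u_s\rangle = t\|u_s\|_H^2-\lambda t\int_I u_s^2 e^{\frac12 t^2 u_s^2}\,dx,$$
so that $\Phi(s,t_s)=0$ and $\Phi(0,1)=\langle J'(u),u\rangle=0$. A short computation, combined with the Nehari relation $\|u\|_H^2=\lambda\int_I u^2 e^{\frac12 u^2}dx$, gives
$$\partial_t\Phi(0,1) = -\lambda\int_I u^4 e^{\frac12 u^2}\,dx < 0,$$
(the integral being well-defined by Lemma \ref{Lp}), so the implicit function theorem (applied in the $C^\infty$ setting of Lemma \ref{lemmaC1}) yields a $C^1$ branch $s\mapsto t_s$ near $0$.

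With this in hand, the chain rule and $\langle J'(u),u\rangle=0$ give
$$\frac{d}{ds}J(t_s u_s)\Big|_{s=0}= t'(0)\langle J'(u),u\rangle + \langle J'(u),v\rangle = \langle J'(u),v\rangle<0,$$
so $J(t_s u_s)<J(u)$ for sufficiently small $s>0$. Since $t_s u_s\in N(J)$, this forces $a(J)\le J(t_s u_s)<J(u)$, which is the desired conclusion. The only delicate point, and the step I expect to require the most care, is the differentiability of $s\mapsto t_s$: one must check that $\partial_t\Phi(0,1)\neq 0$ and that $\Phi$ is regular enough (which follows from Lemma \ref{lemmaC1}), but both facts are straightforward consequences of $u\neq 0$ and the $C^\infty$ character of $J$; the rest is essentially the bookkeeping described above.
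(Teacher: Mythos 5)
Your argument is correct: the non-degeneracy computation is right, since on $N(J)$ one has
$\partial_t\Phi(0,1)=\|u\|_H^2-\lambda\int_I u^2e^{\frac12u^2}dx-\lambda\int_I u^4e^{\frac12u^2}dx=-\lambda\int_I u^4e^{\frac12u^2}dx<0$
(using $\lambda>0$ and $u\not\equiv 0$, both guaranteed in the standing setting $0<\lambda<\lambda_1(I)$ and $u\in N(J)$), and $\Phi$ is smooth because $J\in C^\infty(H)$ by Lemma \ref{lemmaC1} and $(s,t)\mapsto t(u+sv)$ is a smooth map into $H$; the chain-rule identity and the conclusion $a(J)\le J(t_su_s)<J(u)$ then follow as you say (note that Lemma \ref{nihari} is not even needed, since the implicit function theorem already produces the local branch, and $t_su_s\neq 0$ for small $s$ because $t_s\to 1$ and $u_s\to u\neq 0$). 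The paper follows the same overall strategy (perturb $u$ in a descent direction and re-project onto the Nehari manifold) but implements the projection differently and more softly: it considers the two-parameter family $\sigma_t(\alpha)=\alpha u-th$ with $\langle J'(u),h\rangle=1$, uses only the continuity of $f_t(\alpha)=\|\sigma_t(\alpha)\|_H^2-\lambda\int_I\sigma_t(\alpha)^2e^{\frac12\sigma_t(\alpha)^2}dx$ (Lemma \ref{con}) together with the sign change of $f_0$ at $\alpha=1$ and the intermediate value theorem to find $\alpha_t$ with $\sigma_t(\alpha_t)\in N(J)$, and then needs the extra monotonicity observation $J(\alpha u)\le J(u)$ (the fibering map $\alpha\mapsto J(\alpha u)$ is maximized at $\alpha=1$) to close the chain $a(J)\le J(\sigma_t(\alpha_t))<J(\alpha_t u)\le J(u)$. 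Thus the paper avoids the implicit function theorem and any second-derivative/non-degeneracy computation, at the price of the additional comparison step $J(\alpha_t u)\le J(u)$; your route needs $C^2$-regularity of $J$ and the non-degeneracy of the fibering map, but in exchange the projected curve passes through $u$ at $s=0$, so a single differentiation gives the strict decrease directly. Both proofs are complete and rest on the same ingredients already established in the paper.
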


\begin{proof}
We choose $h\in H$ such that $\langle J'(u),h\rangle=1$, and for $\alpha\in \R{}$ we consider the path $\sigma_{t}(\alpha)=\alpha u -th$, $t\in \R{}$. Remember that by Lemma \ref{lemmaC1} $J\in C^1(H)$. By the chain rule 
$$\frac{d}{dt} J(\sigma_{t}(\alpha))=-\langle J'(\sigma_{t}(\alpha)),h\rangle,$$
therefore, if we set $t=0$, $\alpha=1$ we find
$$\frac{d}{dt}J(\sigma_{t}(\alpha))\bigg|_{t=0,\alpha=1}=-\langle J'(u),h\rangle =-1.$$
Hence there exist $\delta>0$ and $\varepsilon>0$ such that for $\alpha \in [1-\varepsilon,1+\varepsilon]$ and $t \in (0,\delta]$
\begin{equation}\label{Jsigma}
J(\sigma_{t}(\alpha))<J(\sigma_{0}(\alpha))=J(\alpha u).
\end{equation}
Now we consider the function $f$ defined by
$$f_{t}(\alpha)=\|\sigma_{t}(\alpha)\|_{H}^{2}-\lambda \int_{I}\sigma_{t}(\alpha)^{2}e^{\frac{1}{2}\sigma_{t}(\alpha)^{2}}dx,$$
which is continuous with respect to $t$ and $\alpha$ by Lemma \ref{con}.
Notice that since $u\in N(J)$ we have
$$f_{0}(\alpha)=\alpha^{2}\int_{I}u^{2}\left(e^{\frac{1}{2}u^{2}}-e^{\frac{1}{2}\alpha^{2}u^{2}}\right)dx$$
and $f_0(1)=0$. Since the function $\alpha \mapsto u^{2}(e^{\frac{1}{2}u^{2}}-e^{\frac{1}{2}\alpha^{2}u^{2}})$ is decreasing, by continuity we can find $\varepsilon_{1}\in (0,\varepsilon)$ and  $\delta_{1}\in (0,\delta)$  such that
$$f_{t}(1-\varepsilon_{1})>0,\quad  f_{t}(1+\varepsilon_{1})<0\quad \text{for }t\in[0,\delta_{1}].$$
Then if we fix $t\in (0,\delta_1]$ we can find $\alpha_{t}\in [1-\ve_{1},1+\ve_{1}]$ such that $f_t(\alpha_t)=0$, i.e. $\sigma_{t}(\alpha_{t})\in N(J)$, and
from \eqref{Jsigma} we get
$$a(J)\leq J(\sigma_{t}(\alpha_{t}))<J(\alpha_{t}u).$$
Since
$$\frac{d}{d\alpha} J(\alpha u)= f_0(\alpha),$$
and $f_0(\alpha)>0$ for $\alpha<1$ and $f_0(\alpha)<0$ for $\alpha>1$, we get
$$J(\alpha u)\le J(u)\quad \text{for }\alpha\in \R{},$$
and we conclude that
$$a(J)\leq J(\sigma_{t}(\alpha_{t}))<J(\alpha_{t}u)\leq J(u).$$
\end{proof}

\medskip

\noindent\emph{Proof of Theorem \ref{ex} (completed).}
To complete the proof it is enough to show the existence of $u_{0}\in N(J)$ such that $J(u_{0})=a(J)$. We consider then a minimizing sequence $(u_{k})\subset N(J)$. \\
We assume that $u_k$ changes sign. Then since $u_k\in N(J)$ we have
$$\| |u_k|\|^2_{H}< \| u_k\|^2_{H}=\lambda\int_{I}u_k^2e^{\frac{1}{2}u_k^2}dx= \lambda\int_{I}|u_k|^2e^{\frac{1}{2}|u_k|^2}dx,$$
where we used \eqref{|u|}, hence by Lemma \ref{nihari} there exists $t_k=t(|u_k|)<1$ such that $t_k|u_k|\in N(J)$, whence
$$J(t_k|u_k|)=Q(t_k|u_k|)< Q(|u_k|)=Q(u_k)=J(u_k),$$
where the inequality in the middle depends on the monotonicity of $Q$.
Hence up to replacing $u_k$ with $t_k|u_k|$ we can assume that the minimizing sequence (still denoted by $(u_k)$) is made of non-negative functions. 

Since $J(u_k)=Q(u_k)\le C$ we infer from \eqref{ineq1}
$$\int_{I} u_{k}^{2}e^{\frac{1}{2}u_{k}^{2}}dx \le C$$
and for $u_k\in N(J)$ we get
$$\|u_{k}\|_{H}\le C.$$
Thus up to a subsequence $u_{k}$ weakly converges to a function $u_{0}\in H$, and up to a subsequence the convergence is also almost everywhere.

We claim that $u_{0}\not \equiv 0$. Indeed if $u_{0}\equiv 0$, then from (\ref{convL1}), we have that $\left(e^{\frac{1}{2}u_{k}^{2}} -1\right)\to 0$ in $L^{1}(I)$. Thus
$$\lim_{k\to \infty}\|u_{k}\|_{H}^{2}=2\lim_{k\to \infty}\left[ J(u_{k})+\lambda\int_{I}\left(e^{\frac{1}{2}u_{k}^{2}} -1\right) dx\right] =2a(J).$$
Then according to Theorem \ref{MT2}, since $a(J)<\pi$ we have that $e^{\frac{1}{2}u_{k}^{2}}$ is bounded in $L^{p}$ for some $p>1$, hence weakly converging in $L^p(I)$ to $e^{\frac12 u_0^2}$. From the compactness of the Sobolev embeddings (see \cite[Theorem 7.1]{DNPV}, which can be applied thanks to Proposition \ref{HW}), up to a subsequence $u_{k}^{2}\to u_0^2$ strongly in $L^{p'}(I)$, hence $$\lim_{k\to \infty}\int_{I} u_{k}^{2}e^{\frac{1}{2}u_{k}^{2}}dx =\int_{I} u_{0}^{2}e^{\frac{1}{2}u_{0}^{2}}dx=0,$$
and with Lemma \ref{lemmaaJ} and \eqref{defI} one gets
$$0<a(J)=\lim_{k\to \infty} J(u_{k})= \lim_{k\to \infty} Q(u_{k})=0,$$  
which is a contradiction.

Next we claim that 
$$\|u_{0}\|_{H}^{2}\leq \lambda \int_{I}u_{0}^{2}e^{\frac{1}{2}u_{0}^{2}}dx.$$
So we assume by contradiction that this is not the case, i.e. 
$$\|u_{0}\|_{H}^{2}>\lambda \int_{I}u_{0}^{2}e^{\frac{1}{2}u_{0}^{2}}dx.$$
Then from Lemma \ref{comp}, Lemma \ref{lemmaaJ2} and the weak convergence, we have that

$$\|u_{0}\|_{H}^{2}\leq \liminf_{k\to\infty} \|u_{k}\|_{H}^{2}=\liminf_{k\to\infty} \lambda \int_{I}u_{k}^{2}e^{\frac{1}{2}u_{k}^{2}}dx =\lambda \int_{I}u_{0}^{2}e^{\frac{1}{2}u_{0}^{2}}dx,$$
again leading to a contradiction.\\
From Lemma \ref{nihari}, we have that there exists $0<t\leq 1$ such that $tu_{0}\in N(J)$. Taking Remark \ref{rmkI} into account we get
$$a(J)\leq J(tu_{0})=Q(tu_{0})\leq Q(u_{0})\leq \liminf_{k\to\infty} Q(u_{k}) =a(J).$$
It follows that $t=1$, since otherwise the second inequality above would be strict. Then $u_{0}\in N(J)$ and $J(u_{0})=a(J)$. By Lemma \ref{lemmacrit} we have $J'(u_0)=0$
\hfill$\square$


\section{Proof of Theorem \ref{MT3}}

For $u\in H^{\frac12,2}(\mathbb{R})$ we set $|u|^{*}:\R{}\to \R{}_+$ to be its non-increasing symmetric rearrangement, whose definition we shall now recall.
For a measurable set $A\subset \mathbb{R}$, we define 
$$A^{*}=(-|A|/2,|A|/2).$$
The set $A^{*}$ is symmetric (with respect to $0$) and  $|A^*|=|A|$. For a non-negative measurable function $f$, such that 
$$|\{x\in \mathbb{R} : f(x)>t\}|<\infty \quad \text{ for every } t>0,$$
we define the symmetric non-increasing rearrangement of $f$ by
$$f^{*}(x)=\int_{0}^{\infty} \chi_{\{y\in\R{} : f(y)>t\}^{*}}(x) dt.$$
Notice that $f^*$ is even, i.e. $f^*(x)=f^*(-x)$ and non-increasing (on $[0,\infty)$).

We will state here the two properties that we shall use in the proof of Theorem \ref{MT3}. The following one is proven e.g. in \cite[Section 3.3]{LL}.

\begin{prop}\label{propu*}
Given a measurable function $F:\mathbb{R}\to \mathbb{R}$ and a non-negative non-decreasing function $f:\R{}\to\R{}$ it holds
$$\int_{\mathbb{R}}F(f)dx =\int_{\mathbb{R}}F(f^{*})dx.$$
\end{prop}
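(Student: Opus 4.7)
\medskip

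The plan is to prove the identity by reducing both integrals to the common distribution function of $f$ and $f^{*}$, so that the equality becomes a direct consequence of equimeasurability. The argument splits naturally into three steps.

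\textbf{Step 1 (equimeasurability).} I would first verify that $|\{f>t\}|=|\{f^{*}>t\}|$ for every $t>0$. The defining formula
$$f^{*}(x)=\int_{0}^{\infty}\chi_{\{f>t\}^{*}}(x)\,dt$$
produces, by construction, sets $\{f>t\}^{*}$ that are intervals symmetric about $0$ with $|\{f>t\}^{*}|=|\{f>t\}|$. Because the superlevel sets $\{f>t\}$ are nested and decreasing in $t$, so are the intervals $\{f>t\}^{*}$. A short monotonicity/Fubini argument then identifies $f^{*}(x)=\sup\{t>0:x\in\{f>t\}^{*}\}$, so that up to null sets $\{f^{*}>s\}=\{f>s\}^{*}$, whence $|\{f^{*}>s\}|=|\{f>s\}|$.

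\textbf{Step 2 (layer cake).} For a non-negative measurable $g$ and a non-decreasing, absolutely continuous $F\colon[0,\infty)\to\mathbb{R}$ with $F(0)=0$, Fubini's theorem yields the layer-cake representation
$$\int_{\mathbb{R}}F(g)\,dx=\int_{0}^{\infty}F'(t)\,|\{g>t\}|\,dt,$$
which depends on $g$ only through its distribution function $t\mapsto|\{g>t\}|$. Applying this to $g=f$ and $g=f^{*}$ and invoking Step~1 gives the desired equality for this class of $F$.

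\textbf{Step 3 (extension to general $F$).} Writing $F=F_{+}-F_{-}$, I would approximate each part by non-decreasing simple (step) functions $F_{n}\uparrow F_{\pm}$, apply Step~2 to each $F_{n}$, and pass to the limit via monotone convergence. The identity then holds for every measurable $F$ such that the two integrals make sense, with the usual convention that both sides equal $+\infty$ when the positive parts diverge.

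The main technical point I expect is Step~1: one must pass cleanly from the integral definition of $f^{*}$ to the distribution-function identity $\{f^{*}>s\}=\{f>s\}^{*}$, which is where the nested-interval structure of $\{f>t\}^{*}$ is used. Once this equimeasurability is in hand, Steps~2 and~3 are routine measure-theoretic manipulations, and no further information about $f$ beyond its distribution function is needed.
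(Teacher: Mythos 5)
The paper does not actually prove this proposition; it simply cites \cite[Section 3.3]{LL}, so there is no in-text argument to compare against. Your proposal is the standard textbook proof via equimeasurability and the layer-cake formula, and Steps 1 and 2 are correct as written: the nested-interval structure of the sets $\{f>t\}^{*}$ gives $f^{*}(x)=\sup\{t>0:\ x\in\{f>t\}^{*}\}$, hence $\{f^{*}>s\}=\bigcup_{t>s}\{f>t\}^{*}$, whose measure is $\lim_{t\downarrow s}|\{f>t\}|=|\{f>s\}|$ by continuity from above (this is where the hypothesis $|\{f>t\}|<\infty$ is used). Two small repairs are needed in Step 3. First, the positive and negative parts $F_{\pm}$ of a general measurable $F$ are not monotone, so they cannot be approximated from below by \emph{non-decreasing} step functions; instead approximate them by arbitrary simple functions $\sum_i c_i\chi_{A_i}$ with $A_i\subset(0,\infty)$ Borel, and upgrade Step 1 from equality of the distribution functions to equality of the push-forward measures $A\mapsto|f^{-1}(A)|$ and $A\mapsto|(f^{*})^{-1}(A)|$ on all Borel subsets of $(0,\infty)$: they agree on the generating class of half-lines $(t,\infty)$, where they are finite, hence agree everywhere by uniqueness of measures. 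Second, one must require $F(0)=0$ (equivalently, integrate only over $\{f>0\}$ and $\{f^{*}>0\}$), since the zero sets of $f$ and $f^{*}$ need not have the same measure and the identity fails for, say, $F\equiv 1$. Both caveats are harmless for the application in the paper, where $F(t)=e^{\pi t^{2}}-1$. Note finally that the hypothesis ``non-decreasing'' in the statement must be a misprint --- a non-negative non-decreasing $f$ on $\R{}$ with all superlevel sets of finite measure is identically zero --- and your proof correctly uses only measurability of $f$ together with $|\{f>t\}|<\infty$ for $t>0$.
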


The following P\'olya-Szeg\H{o} type inequality can be found e.g. in \cite{jam} (Inequality (3.6)) or \cite{Park}.

\begin{trm}\label{pol}
Let $u\in H^{s,2}(\mathbb{R})$ for $0<s<1$. Then
$$\int_{\mathbb{R}}|(-\Delta)^{s}|u|^{*}|^{2}dx\leq \int_{\mathbb{R}}|(-\Delta)^{s}u|^{2}dx.$$ 
\end{trm}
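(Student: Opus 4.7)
The plan is to derive the inequality from the subordination of the fractional Laplacian to the heat semigroup, combined with the Riesz rearrangement inequality (the classical Almgren--Lieb strategy).

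For Schwartz data $u$, I would begin from the Bochner subordination identity
\[
|\xi|^{2\alpha}=\frac{\alpha}{\Gamma(1-\alpha)}\int_0^\infty(1-e^{-|\xi|^2 t})\,\frac{dt}{t^{1+\alpha}},\qquad \alpha\in(0,1),
\]
applied with $\alpha$ matching the power of $-\Delta$ appearing in the statement. Multiplying by $|\hat u(\xi)|^2$, integrating in $\xi$ and applying Parseval converts the left-hand side of the inequality into
\[
\frac{\alpha}{\Gamma(1-\alpha)}\int_0^\infty \bigl(\|u\|_{L^2}^2-(u,e^{t\Delta}u)_{L^2}\bigr)\,\frac{dt}{t^{1+\alpha}},
\]
where $e^{t\Delta}$ denotes convolution with the heat kernel $p_t(x)=(4\pi t)^{-1/2}e^{-x^2/4t}$, which is positive, even and radially decreasing, so $p_t=p_t^*$.

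The core step is then the Riesz rearrangement inequality (see e.g.\ \cite[Thm.~3.7]{LL}) applied at each $t>0$: since $p_t\ge 0$ and $p_t=p_t^*$,
\[
(u,e^{t\Delta}u)_{L^2}=\iint u(x)\,p_t(x-y)\,u(y)\,dx\,dy \le \iint |u|^*(x)\,p_t(x-y)\,|u|^*(y)\,dx\,dy =(|u|^*,e^{t\Delta}|u|^*)_{L^2},
\]
first using $u(x)u(y)\le|u(x)||u(y)|$ pointwise and then invoking Riesz. Since rearrangement preserves the $L^2$-norm, $\|u\|_{L^2}^2-(u,e^{t\Delta}u)\ge \||u|^*\|_{L^2}^2-(|u|^*,e^{t\Delta}|u|^*)\ge 0$. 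Integrating against the positive weight $\alpha\,t^{-1-\alpha}/\Gamma(1-\alpha)$ delivers the inequality for Schwartz data; the general case $u\in H^{s,2}(\R{})$ then follows by density of $\M{S}$ in $H^{s,2}(\R{})$ together with the $L^2$-continuity of the symmetric decreasing rearrangement.

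The main technical obstacle is matching the subordination formula to the exact power of $-\Delta$ appearing in the statement: the identity above requires $\alpha\in(0,1)$, so reading the integrand literally as $|(-\Delta)^s u|^2=|\xi|^{4s}|\hat u|^2$ would force $s\in(0,1/2)$. To cover $s\in[\tfrac12,1)$ I would either subtract a linear Taylor term in the subordination identity (a L\'evy--Khintchine-type regularisation), handling the extra $|\xi|^2 t$ piece via the classical P\'olya--Szeg\H{o} inequality for $\nabla u$; or, more systematically, use the Caffarelli--Silvestre extension, representing the left-hand side (up to a constant) as the weighted Dirichlet energy $\int_{\R{}\times(0,\infty)} y^{1-2s}|\nabla U|^2\,dx\,dy$ of the $s$-harmonic extension $U$ of $u$, and applying Steiner symmetrisation in $x$ at each fixed height $y>0$ — essentially the route taken in \cite{Park}.
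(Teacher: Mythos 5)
You should first note that the paper does not prove Theorem \ref{pol} at all: it simply cites \cite{jam} (inequality (3.6)) and \cite{Park}, so any complete argument is necessarily ``a different route''. Your core argument --- Bochner subordination of $|\xi|^{2\alpha}$ to the heat semigroup plus the Riesz rearrangement inequality applied to the heat kernel at each time $t$ --- is exactly the classical Almgren--Lieb proof and is correct in the range it reaches. One small simplification: no Schwartz-density step is needed, since the integrand $1-e^{-|\xi|^2t}\ge 0$ lets Tonelli give the identity $\int_{\R{}}|\xi|^{2\alpha}|\hat u|^2d\xi=\tfrac{\alpha}{\Gamma(1-\alpha)}\int_0^\infty\bigl(\|u\|_{L^2}^2-(u,e^{t\Delta}u)_{L^2}\bigr)t^{-1-\alpha}dt$ for \emph{every} $u\in L^2(\R{})$, both sides possibly infinite; your limiting argument is fixable but needs lower semicontinuity of the seminorm under $L^2$ convergence, not merely $L^2$-continuity of the rearrangement.

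The genuine gap is your treatment of $s\in[\tfrac12,1)$, i.e.\ seminorm order $2s\ge1$ in the literal reading of the statement. (a) After subtracting the linear Taylor term one gets, with $\alpha=2s\in(1,2)$, $\|(-\Delta)^su\|_{L^2}^2=c_\alpha\int_0^\infty\bigl((u,e^{t\Delta}u)_{L^2}-\|u\|_{L^2}^2+t\|\nabla u\|_{L^2}^2\bigr)t^{-1-\alpha}dt$; but under rearrangement the term $(u,e^{t\Delta}u)_{L^2}$ \emph{increases} (Riesz) while $t\|\nabla u\|_{L^2}^2$ decreases, so the two classical inequalities pull in opposite directions and cannot be combined term by term. (b) The Caffarelli--Silvestre energy $\int y^{1-2s}|\nabla U|^2dxdy$ represents the seminorm $\|(-\Delta)^{s/2}u\|_{L^2}^2$ of order $s<1$, not $\|(-\Delta)^su\|_{L^2}^2$, so this route also stops at order one. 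In fact for orders $2s\ge\tfrac32$ the literal statement is false: a smooth nonnegative $u$ with two bumps separated by a valley of positive height has $|u|^*$ Lipschitz but not $C^1$, so the left-hand side is infinite while the right-hand side is finite. The resolution is that the inequality proved in \cite{jam}, \cite{Park}, and the one actually used in the paper (with $(-\Delta)^{1/4}$ on $H^{\frac12,2}(\R{})$), concerns seminorms of order at most one; the statement should be read with $(-\Delta)^{s/2}$, consistently with the definition \eqref{BPS} of $H^{s,2}$. With that reading your subordination argument, run with $\alpha=s\in(0,1)$, covers the whole range and the problematic second case disappears.
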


Now given $u\in H^{\frac12,2}(\R{})$, from Proposition \ref{propu*} we get
$$\int_{\mathbb{R}}\left( e^{\pi u^{2}}-1 \right) dx =\int_{\mathbb{R}}\left( e^{\pi (|u|^*)^{2}}-1 \right)dx,\quad \| |u|^*\|_{L^2}=\|u\|_{L^2},$$
and according to Theorem \ref{pol}
$$\||u|^*\|_{H^{\frac12,2}(\R{})}^2=\||u|^*\|_{L^2(\R{})}^2+\int_{\mathbb{R}}|(-\Delta)^{\frac{1}{4}}|u|^{*}|^{2}dx\leq \|u\|_{L^2(\R{})}^2+\int_{\mathbb{R}}|(-\Delta)^{\frac{1}{4}}u|^{2}dx = \|u\|_{H^{\frac12,2}(\R{})}^2.$$
Therefore in the rest of the proof of \eqref{stimaMT3} we may assume that $u\in H^{\frac12,2}(\R{})$ is even, non-increasing on $[0,\infty)$, and $\|u\|_{H^{\frac12,2}(\R{})}\le 1$.

We write
$$\int_{\mathbb{R}}\left(e^{\pi u^{2}}-1 \right)dx =\int_{\R{}\setminus I}\left(e^{\pi u^{2}}-1 \right)dx+\int_{I}\left(e^{\pi u^{2}}-1\right) dx=:(I)+(II),$$
where $I=(-1/2,1/2)$.
We start by bounding $(I)$.  By monotone convergence
$$(I)=\sum_{k=1}^{\infty} \int_{I^c} \pi^{k} \frac{u^{2k}}{k!}dx.$$
Since $u$ is even and non-increasing, for $x\ne 0$ we have
\begin{equation}\label{radstima}
u^2(x)\le \frac{1}{2|x|}\int_{-|x|}^{|x|} u^2(y)dy\le \frac{\|u\|_{L^2}^2}{2|x|},
\end{equation}
hence for $k\ge 2$ we bound
$$\int_{I^c}u^{2k}dx\leq 2^{1-k} \|u\|_{L^{2}(\R{})}^{2k}\int_\frac12^{\infty}\frac{1}{x^{k}} dx= \frac{ \|u\|_{L^{2}(\R{})}^{2k}}{(k-1)}.$$
It follows that
$$\sum_{k=2}^{\infty} \int_{I^c} \pi^{k} \frac{u^{2k}}{k!}dx \leq \sum_{k=2}^{\infty} \frac{(\pi \|u\|_{L^{2}}^{2})^{k}}{k! (k-1)}. $$
Thus, since $\|u\|_{L^2(\R{})}\le 1$ we estimate
$$(I)\leq \pi \|u\|_{L^{2}(\R{})}^{2}\left(1+\sum_{k=1}^{\infty}\frac{\left(\pi \|u\|^{2}_{L^2(\R{})}\right)^k}{(k+1)!k} \right) \leq C.$$
We shall now bound $(II)$. We define the function $v:\R{}\to \R{}$ as follows 
$$v(x)=\left\{ \begin{array}{ll}
u(x)-u(\tfrac12) &\text{if } |x|\leq \frac12\\
0 &\text{if }|x|>\frac12\rule{0cm}{.5cm}.
\end{array}
\right. 
$$
Then with \eqref{radstima} and the estimate $2a\leq a^{2}+1$, we find
\begin{equation}\label{equv}
\begin{split}
u^{2}&\le v^{2}+2vu(\tfrac12)+u(\tfrac{1}{2})^{2}\\
&\le v^{2}+2v\|u\|_{L^2(\R{})}+\|u\|_{L^2(\R{})}^{2}\\
&\leq v^{2}+v^{2}\|u\|^{2}_{L^{2}(\R{})}+1 +\|u\|^{2}_{L^{2}(\R{})}\\
&\leq v^{2}\left(1+\|u\|^{2}_{L^{2}(\R{})}\right)+2.
\end{split}
\end{equation}
Now, recalling that $u$ is decreasing we have for $x\in I =[-\tfrac12,\tfrac12]$
\[\begin{split}
\int_{\mathbb{R}}\frac{(v(x)-v(y))^{2}}{(x-y)^{2}}dy&=\int_{I}\frac{(u(x)-u(y))^{2}}{(x-y)^{2}}dy+\int_{I^c}\frac{(u(x)-u(\tfrac12))^{2}}{(x-y)^{2}}dy\\
&\le\int_{\mathbb{R}}\frac{(u(x)-u(y))^{2}}{(x-y)^{2}}dy.
\end{split}\]
Notice that the last integral converges for a.e. $x\in I$ thanks to Proposition \ref{HW} and Fubini's theorem. Similarly for $x\in I^c$
\[\begin{split}
\int_{\mathbb{R}}\frac{(v(x)-v(y))^{2}}{(x-y)^{2}}dy&=\int_{I}\frac{(u(\tfrac12)-u(y))^{2}}{(x-y)^{2}}dy\\
&\leq \int_{I}\frac{(u(x)-u(y))^{2}}{(x-y)^{2}}dy\\
&\leq \int_{\mathbb{R}}\frac{(u(x)-u(y))^{2}}{(x-y)^{2}}dy.
\end{split}\]
Integrating with respect to $x$  we obtain
\[\begin{split}
\|(-\Delta)^\frac14 v\|^2_{L^2(\R{})} &= \frac{1}{C_s^2} \int_{\mathbb{R}}\int_{\mathbb{R}}\frac{(v(x)-v(y))^{2}}{(x-y)^{2}}dydx\\
& \leq \frac{1}{C_s^2} \int_{\mathbb{R}}\int_{\mathbb{R}}\frac{(u(x)-u(y))^{2}}{(x-y)^{2}}dydx\\
&= \|(-\Delta)^\frac14 u\|^2_{L^2(\R{})},
\end{split}
\]
where $C_s$ is as in Proposition \ref{HW} below.
Thus, since $\|u\|_H\leq 1$, 
$$\|(-\Delta)^\frac14 v\|^2_{L^2(\R{})}\leq \|(-\Delta)^\frac14 u\|^2_{L^2(\R{})}\leq 1-\|u\|^{2}_{L^{2}(\R{})}.$$
Therefore, if we set $w=v \sqrt{1+\|u\|^{2}_{L^2(\R{}) }}$, we have
$$\|(-\Delta)^\frac14 w\|^2_{L^2(\R{})}\leq \left(1+\|u\|^{2}_{L^2(\R{})} \right)  \left(1-\|u\|^{2}_{L^2(\R{})}\right)\le 1,$$
hence, using the Moser-Trudinger inequality on the interval $I=(-1/2,1/2)$ (Theorem \ref{MT2}), one has
$$\int_I e^{\pi w^{2}}dx<C,$$
and using \eqref{equv}
$$\int_I e^{\pi u^{2}}dx \leq e^{2\pi}\int_I e^{\pi w^{2}}dx \leq C,$$
which completes the proof of \eqref{stimaMT3}.

\medskip

It remains to prove \eqref{stimaMT3s}. Given $\tau>2$ consider the function
$$f=f_\tau:=\frac{1}{2\tau \sqrt{|x|}} \chi_{\{x\in\R{}:{r<|x|<\delta}\}},\quad \delta:=\frac{1}{\tau}, \quad r:=\frac{1}{\tau e^\tau}.$$
Notice that $\|f\|_{L^2(\R{})}^2=(2\tau)^{-1}$.
Fix a smooth even function $\psi:\R{}\to [0,1]$ with $\psi\equiv 1$ in $[-\frac{1}{2}, \frac{1}{2}]$ and $\mathrm{supp}(\psi)\subset(-1,1)$. For $x\in \R{}$ we set
$$u(x)=\psi (x)(F_\frac14 *f)(x),$$
where $F_\frac14(x)=(2\pi|x|)^{-\frac12}$ is as in Lemma \ref{green2}.
Clearly $u\equiv 0$ in $\R{}\setminus I$, and $u$ is non-negative and even everywhere.

\medskip

In the rest of the proof $s=\frac14$. Notice that $(-\Delta)^s (F_s*f)=f$. This follows easily from Lemma \ref{green2} and the properties of the Fourier transform, see e.g. \cite[Corollary 5.10]{LL}. Then we compute

\begin{equation}\label{test}
(-\Delta)^su=f+(-\Delta)^s[(\psi-1)(F_s*f)]=:f+v,
\end{equation}
and set $g(x,y)=(\psi-1)(x)F_s(x-y)$. Notice that $g$ is smooth in $\R{}\times (-\frac12,\frac12)$. We write
\[\begin{split}
v(x)&=(-\Delta)^s \int_{\R{}} g(x,y)f(y)dy\\
&=\int_{\{r<|y|<\delta\}}(-\Delta_x)^sg(x,y) f(y)dy,
\end{split}\]
where we used Proposition \ref{lapint} and Fubini's theorem.
With Jensen's inequality
\begin{equation}\label{estv}
\begin{split}
\|v\|_{L^{2}(\R{})}^2 &=\int_{\R{}}\left|\int_{\{r<|y|<\delta\}}(-\Delta_x)^sg(x,y) f(y)dy\right|^{2}dx\\
&\leq 2(\delta-r)\int_{\{r<|y|<\delta\}} f(y)^{2}\int_{\R{}}\left|(-\Delta_x)^s g(x,y)\right|^{2}dx dy\\ 
&\leq  2\delta\|f\|^{2}_{L^{2}(\R{})}\sup_{|y|\in [r,\delta]} \int_{\R{}}\left| (-\Delta_x)^sg(x,y)\right|^{2}dx\\
&\le C (\delta \tau^{-1})=O(\tau^{-2}),
\end{split}
\end{equation}
where we used that
$$\sup_{|y|\in [r,\delta]} \int_{\R{}}\left| (-\Delta_x)^sg(x,y)\right|^{2}dx<\infty.$$
This in turn can be seen noticing that $(-\Delta_x)^s g(x,y)$ is smooth, hence bounded on $[-R,R]\times [r,\delta]$ for every $R$, and for $|x|$ large and $r\le |y|\le \delta$, using Proposition \ref{lapint}
\[\begin{split}
(-\Delta_x)^s g(x,y)&= C_s\int_{\R{}}\frac{-F_s(x-y)-(\psi(z)-1) F_s(z-y)}{|z-x|^{1+2s}}dz \\
&=C_s \int_{-1}^1\frac{-\psi(z) F_s(z-y)}{|z-x|^{1+2s}}dz -(-\Delta)^s F_s(x-y)\\
&=O(|x|^{-1-2s})\quad \text{uniformly for }|y|\le \frac{1}{2},
\end{split}\]
where we also used that $(-\Delta)^s F_s =0$ away from the origin, see Lemma \ref{green2}.
Actually, with the same estimates we get
\[\begin{split}
\int_{-\delta}^\delta |v|^2dx  &\leq 2 (\delta-r)\|f\|^{2}_{L^{2}(\R{})}\int_{-\delta}^\delta \sup_{(x,y)\in [-\delta,\delta]^2} \left| (-\Delta_x)^sg(x,y)\right|^{2}dx\\
&\le C\delta^2\|f\|_{L^2(\R{})}^2=O(\tau^{-3}).
\end{split}\]
Therefore, using H\"older's inequality and that $\mathrm{supp}(f)\subset [-\delta,\delta]$ we get 
\begin{equation}\label{stimadeltau}
\|(-\Delta)^s u\|_{L^{2}(\R{})}^{2} = \|f\|_{L^2}^2+\|v\|_{L^2}^2+2\int_{-\delta}^\delta fvdx=\frac{1}{2\tau}+O(\tau^{-2}),\quad \text{as }\tau\to\infty.
\end{equation}

\medskip

\noindent We now estimate $u$.  For $0 < x < r$, with the change of variable $\tilde y=\sqrt{\frac{y}{x}}$ we have

\begin{equation*}
\begin{split}
u(x)&= \frac{1}{2\tau \sqrt{2\pi}}\int_r^\delta \left(\frac{1}{\sqrt{(y-x) y}}+\frac{1}{\sqrt{(y+x) y}}\right)dy\\
&=\frac{1}{\tau\sqrt{2\pi}}\int_{\sqrt\frac{r}{x}}^{\sqrt\frac{\delta}{x}}\left(\frac{1}{\sqrt{\tilde y^2-1}}+\frac{1}{\sqrt{\tilde y^2+1}} \right)d\tilde y\\
&=\frac{1}{\tau\sqrt{2\pi}}\left(\log(\sqrt{\tilde y^2-1}+\tilde y)\bigg|_{\sqrt\frac{r}{x}}^{\sqrt\frac{\delta}{x}}+\log(\sqrt{\tilde y^2+1}+\tilde y)\bigg|_{\sqrt\frac{r}{x}}^{\sqrt\frac{\delta}{x}}\right)\\
&=\frac{1}{\sqrt{2\pi}}+O(\tau^{-1}),
\end{split}
\end{equation*}
with $|\tau O(\tau^{-1})|\le C$ as $\tau\to\infty$ with $C$ independent of $x\in [0,r]$.

Similarly for $r<x<\delta$ we write
\begin{equation*}
\begin{split}
u(x)&\le  \frac{1}{\tau\sqrt{2\pi}}\left[ \int_r^x \frac{dy}{\sqrt{(x-y)y}} + \int_x^\delta \frac{dy}{\sqrt{(x-y) y}}\right]\\
&=\frac{2}{\tau\sqrt{2\pi}}\left[ \int^{1}_{\sqrt{\frac{r}{x}}}\frac{d\tilde y}{\sqrt{1-\tilde y^{2}}} +\log( \sqrt{\tilde y^2-1}+\tilde y)\Big|^{\sqrt\frac{\delta}{x}}_{1} \right]\\
&=\frac{1}{\tau \sqrt{2\pi}}\left[\log\left( \frac{\delta}{x}\right)+O(1) \right],
\end{split}
\end{equation*}
since $\int_0^1 \frac{d\tilde y}{\sqrt{1-\tilde y^{2}}}< \infty$. Here $|O(1)|\le C$ as $\tau \to \infty$ with $C$ independent of $x\in (r,\delta)$.

\noindent When $\delta <x<1$ similar to the previous computation, and recalling that $0\le \psi\le 1$,
\begin{equation*}
\begin{split}
u(x)&\leq \frac{1}{\tau \sqrt{2\pi}} \int_r^\delta \frac{dy}{\sqrt{(x-y) y}} =\frac{2}{\tau\sqrt{2\pi}}\int_{\sqrt{\frac{r}{x}}}^{\sqrt{\frac{\delta}{x}}}\frac{d\tilde y}{\sqrt{1-\tilde y^{2}}}\le \frac{2}{\tau\sqrt{2\pi}}\int_0^1\frac{d\tilde y}{\sqrt{1-\tilde y^{2}}}=O(\tau^{-1}),
\end{split}
\end{equation*}
with $|\tau O(\tau^{-1})|\le C$ as $\tau \to \infty$ with $C$ independent of $x\in (0,1)$.
Thus
\begin{equation}\label{stimeu}
\left\{
\begin{array}{ll}
u(x)= \frac{1}{\sqrt{2\pi}}+O(\tau^{-1}) & \text{for } 0<x<r\\
u(x)\le \frac{2}{\tau\sqrt{2\pi}}\log\left( \frac{\delta}{x}\right)+O(\tau^{-1}) & \text{for } r<x<\delta \\
u(x) =O(\tau^{-1}) & \text{for } \delta<x<1.
\end{array}
\right.
\end{equation}
Of course the same bounds hold for $x<0$ since $u$ is even.

We now want to estimate $\|u\|_{L^{2}(\R{})}^{2}$. We have 
$$\int_{0}^{r}u^{2} dx =r\left(\frac{1}{2\pi} + O(\tau^{-1})\right)=O(\tau^{-2}).$$
For $x\in [r,\delta]$ we have from \eqref{stimeu}
$$u(x)^2\le \frac{C}{\tau^2}\left(\log^2\left(\frac\delta x\right)+\log\left(\frac\delta x\right) +1\right)\le  \frac{2C}{\tau^2}\left(\log^2\left(\frac\delta x\right)+1\right) .$$
Then, since
$$\int_{r}^{\delta}\log^{2}\left(\frac{\delta}{x}\right)dx = x\left(\log^{2}\left(\frac{\delta}{x}\right)+2\log\left(\frac{\delta}{x}\right)+2\right)\bigg|_{r}^{\delta}\leq 2\delta =O(\tau^{-1}),$$
we bound
$$\int_{r}^{\delta}u^{2}dx =O(\tau^{-3}).$$
Finally, still using \eqref{stimeu},
$$\int_{\delta}^{1}u^{2}dx =O(\tau^{-2}).$$
Also considering \eqref{stimadeltau}, we conclude
\begin{equation}\label{stimaufin}
\|u\|_{L^{2}(\R{})}^{2}=2\|u\|_{L^2([0,1])}^2= O(\tau^{-2}),\quad \|u\|_{H^{\frac{1}{2},2}(\R{})}^2=\frac{1}{2\tau}+O(\tau^{-2}).
\end{equation}
Setting $w_\tau:=u \|u\|_{H^{\frac{1}{2},2}(\R{})}^{-1}$, and using \eqref{stimeu} and \eqref{stimaufin}, we conclude
\[
\int_{-r}^{r}|w_\tau|^2 \left(e^{\pi w_\tau^2}-1\right)dx \ge\int_{-r}^r\left(\frac{\tau+O(1)}{\pi}\right)\left( e^{\tau+O(1)} -1\right) dx\\
\ge \frac{r \tau e^{\tau}}{C} =\frac{1}{C},
\]
therefore
$$\lim_{\tau\to\infty}\int_{\R{}}h(w_\tau) \left(e^{\pi w_\tau^2}-1\right)dx \ge \int_{-r}^{r} h(w_\tau) \left(e^{\pi w_\tau^2}-1\right)dx\to\infty $$
as $\tau\to \infty$, for any $h$ satisfying \eqref{condh2}.
\hfill$\square$

\appendix

\section{Some useful results}
We define
\begin{equation}\label{defWsp}
W^{s,p}(\R{}):=\left\{u\in L^p(\R{}): [u]_{W^{s,p}(\R{})}^p:=\int_{\R{}}\int_{\R{}}\frac{|u(x)-u(y)|^p}{|x-y|^{1+sp}}dxdy<\infty\right\}.
\end{equation}

\begin{prop}\label{HW}
For $s\in (0,1)$ we have, $[u]_{W^{s,2}(\R{})}<\infty$ if and only if $(-\Delta)^\frac{s}{2}u\in L^2(\R{})$, and in this case
$$[u]_{W^{s,2}(\R{})}= C_s \|(-\Delta)^\frac{s}{2}u\|_{L^2(\R{})},$$
where $[u]_{W^{s,2}(\R{})}$ is as in \eqref{defWsp} and $C_s$ depends only on $s$. In particular $H^{s,2}(\R{})= W^{s,2}(\R{})$.
\end{prop}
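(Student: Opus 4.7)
The plan is to prove the identity via Fourier analysis, exploiting the fact that both the Gagliardo seminorm and the $L^2$ norm of $(-\Delta)^{s/2}u$ admit clean representations in frequency space. The argument is classical; the only care needed concerns the interchange of integrals and the computation of the resulting one-dimensional integral.

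First, I would recall that by the definition of $(-\Delta)^{s/2}$ via Fourier multipliers together with Plancherel's theorem, one has
\begin{equation*}
\|(-\Delta)^{s/2}u\|_{L^2(\R{})}^2 = \int_{\R{}} |\xi|^{2s}|\hat u(\xi)|^2 d\xi,
\end{equation*}
valid for any tempered distribution $u$ with $(-\Delta)^{s/2}u\in L^2$. So the task reduces to showing that $[u]_{W^{s,2}(\R{})}^2$ equals the same expression up to a constant depending only on $s$.

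Next I would manipulate the Gagliardo seminorm: performing the change of variables $z=x-y$ and using Tonelli's theorem (the integrand is nonnegative, so no justification beyond measurability is needed), I get
\begin{equation*}
[u]_{W^{s,2}(\R{})}^2 = \int_{\R{}}\frac{1}{|z|^{1+2s}}\left(\int_{\R{}}|u(x)-u(x-z)|^2 dx\right)dz.
\end{equation*}
The inner integral is the $L^2$-norm squared of the translate difference $u-\tau_z u$, which by Plancherel equals $\int_{\R{}}|1-e^{-iz\xi}|^2|\hat u(\xi)|^2 d\xi = \int_{\R{}} 2(1-\cos(z\xi))|\hat u(\xi)|^2 d\xi$. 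Inserting this and applying Tonelli once more,
\begin{equation*}
[u]_{W^{s,2}(\R{})}^2 = 2\int_{\R{}}|\hat u(\xi)|^2 \left(\int_{\R{}}\frac{1-\cos(z\xi)}{|z|^{1+2s}}dz\right)d\xi.
\end{equation*}

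The key computation is then the inner integral over $z$. For $\xi\neq 0$ the substitution $t=z\xi$ gives
\begin{equation*}
\int_{\R{}}\frac{1-\cos(z\xi)}{|z|^{1+2s}}dz = |\xi|^{2s}\int_{\R{}}\frac{1-\cos t}{|t|^{1+2s}}dt =: c_s|\xi|^{2s},
\end{equation*}
where $c_s<\infty$ precisely because $s\in(0,1)$: near $t=0$ the integrand behaves like $|t|^{1-2s}$ (integrable since $s<1$), and at infinity like $|t|^{-1-2s}$ (integrable since $s>0$). Combining everything yields
\begin{equation*}
[u]_{W^{s,2}(\R{})}^2 = 2c_s\int_{\R{}}|\xi|^{2s}|\hat u(\xi)|^2 d\xi = 2c_s\|(-\Delta)^{s/2}u\|_{L^2(\R{})}^2,
\end{equation*}
so the constant is $C_s=\sqrt{2c_s}$.

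The only mild obstacle is making sense of $\hat u$ when $u$ is merely in $L^2_{\mathrm{loc}}$ or in $L_s(\R{})$: one should first establish the identity for $u\in\mathcal{S}$ by the computation above, and then extend by density, noting that both sides of the claimed equality are simultaneously finite or infinite on the common domain where they are defined (which is exactly the point of the ``if and only if'' statement). The equivalence $H^{s,2}(\R{})=W^{s,2}(\R{})$ follows immediately since both norms add $\|u\|_{L^2(\R{})}^2$.
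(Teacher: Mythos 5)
Your proof is correct and follows essentially the same route as the reference the paper cites: the paper gives no argument of its own but points to Proposition 3.6 of Di Nezza--Palatucci--Valdinoci, whose proof is exactly this Fourier/Plancherel computation (change of variables $z=x-y$, Plancherel on the inner integral, Tonelli, and the scaling substitution $t=z\xi$). One small remark: since $W^{s,2}(\R{})$ and $H^{s,2}(\R{})$ both presuppose $u\in L^2(\R{})$, the Fourier transform $\hat u$ is already an $L^2$ function, so the density argument you sketch at the end is cleaner than you suggest and the worry about $u\in L^2_{\mathrm{loc}}$ does not really arise.
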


\begin{proof}
See e.g. Proposition 3.6 in \cite{DNPV}.
\end{proof}

\medskip

Define the bilinear form
$$\M{B}_s(u,v)=\int_{\R{}}\int_{\R{}} \frac{(u(x)-u(y))(v(x)-v(y))}{|x-y|^{1+2s}}dxdy,\quad \text{for }u,v\in H^{s,2}(\R{}),$$
where the double integral is well defined thanks to H\"older's inequality and Proposition \ref{HW}. 

The following simple and well-known existence result proves useful. A proof can be found (in a more general setting) in \cite{FKV}.

\begin{trm}\label{trmexist}
Given $s\in (0,1)$, $f\in L^2(I)$ and $g:\R{}\to \R{}$ such that
\begin{equation}\label{intg}
\int_I\int_{\R{}}\frac{(g(x)-g(y))^2}{|x-y|^{1+2s}}dxdy<\infty,
\end{equation}
there exists a unique function $u\in \tilde H^{s,2}(I)+g$ solving the problem
\begin{equation}\label{uvar}
\M{B}_s(u,v)=\int_{\R{}} f v dx\quad \text{for every }v\in \tilde H^{s,2}(I).
\end{equation}
Moreover such $u$ satisfies $(-\Delta)^s u=\frac{C_s}{2}f$ in $I$ in the sense of distributions, i.e.
\begin{equation}\label{uweak}
\int_{\R{}} u(-\Delta)^s \varphi dx=\frac{C_s}{2}\int_{\R{}} f\varphi dx\quad \text{for every }\varphi \in C^\infty_c(I),
\end{equation}
where $C_s$ is the constant in Proposition \ref{lapint}.
\end{trm}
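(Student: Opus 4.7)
My approach is to reduce to a homogeneous Dirichlet problem on the Hilbert space $\tilde H^{s,2}(I)$ and then invoke Lax--Milgram (or, equivalently, minimize a strictly convex quadratic energy, since $\mathcal{B}_s$ is symmetric). Writing $u=w+g$ with unknown $w\in \tilde H^{s,2}(I)$, the variational identity \eqref{uvar} becomes
\[
\mathcal{B}_s(w,v)=L(v):=\int_{\R{}} fv\,dx-\mathcal{B}_s(g,v)\quad\text{for every } v\in \tilde H^{s,2}(I),
\]
so the whole matter reduces to checking that $\mathcal{B}_s$ is continuous and coercive on $\tilde H^{s,2}(I)$ and that $L$ is a continuous linear functional on it.

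Continuity of $\mathcal{B}_s$ is immediate from Cauchy--Schwarz applied to the defining double integral together with Proposition \ref{HW}. For coercivity, Proposition \ref{HW} gives $\mathcal{B}_s(v,v)=C_s^2\|(-\Delta)^{s/2}v\|_{L^2(\R{})}^2$, which by a fractional Poincar\'e inequality on $\tilde H^{s,2}(I)$ (entirely analogous to \eqref{poinc}, proved via the first Dirichlet eigenvalue of $(-\Delta)^s$ on $I$) controls the full $H^{s,2}(\R{})$-norm of $v$ from below. The functional $v\mapsto \int fv$ is continuous via the $L^2$ embedding, so the crux is the continuity of $v\mapsto \mathcal{B}_s(g,v)$. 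Since $v\equiv 0$ on $I^c$, the $I^c\times I^c$ part of the double integral vanishes, and by symmetry
\[
\mathcal{B}_s(g,v)=\int_I\int_I \frac{(g(x)-g(y))(v(x)-v(y))}{|x-y|^{1+2s}}\,dxdy+2\int_I\int_{I^c}\frac{(g(x)-g(y))v(x)}{|x-y|^{1+2s}}\,dxdy.
\]
Cauchy--Schwarz bounds each piece by a constant depending only on the quantity appearing in \eqref{intg} times $[v]_{W^{s,2}(\R{})}$; in the second term one uses the elementary estimate $\int_I\int_{I^c} v(x)^2/|x-y|^{1+2s}\,dydx\le [v]_{W^{s,2}(\R{})}^2$, valid because $v(y)=0$ for $y\in I^c$.

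Once Lax--Milgram produces a unique $w$ and hence $u=w+g$, the distributional statement \eqref{uweak} follows by taking $\varphi\in C^\infty_c(I)\subset \tilde H^{s,2}(I)$ as test function in \eqref{uvar}, polarizing the identity of Proposition \ref{HW} to get $\mathcal{B}_s(u,\varphi)=C_s^2\int_{\R{}}(-\Delta)^{s/2}u\cdot(-\Delta)^{s/2}\varphi\,dx$, and applying Plancherel (justified by the rapid decay of $(-\Delta)^s\varphi$ noted in the introduction) to move one factor of $(-\Delta)^{s/2}$ onto $\varphi$, finally reconciling the normalizations of $C_s$ in Propositions \ref{HW} and \ref{lapint} to produce the prefactor $C_s/2$ in \eqref{uweak}. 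The main obstacle I expect is precisely the boundedness of $v\mapsto \mathcal{B}_s(g,v)$: the hypothesis \eqref{intg} is tailored so that the nonlocal interaction between $g|_{I^c}$ and $v|_I$ still defines a bounded linear functional on $\tilde H^{s,2}(I)$, and it is essentially the only place where the assumed regularity of $g$ is used.
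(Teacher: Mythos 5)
Your reduction $u=w+g$ and the Lax--Milgram argument for existence and uniqueness are correct, and this is essentially the argument of \cite{FKV}, to which the paper simply defers; in particular your decomposition of $\M{B}_s(g,v)$ into the $I\times I$ and $I\times I^c$ pieces and the bound of the latter via \eqref{intg} and the identity $v(x)^2=(v(x)-v(y))^2$ for $y\in I^c$ is exactly the right way to see that \eqref{intg} makes the affine problem well posed.

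The genuine gap is in your derivation of \eqref{uweak}. You polarize Proposition \ref{HW} to write $\M{B}_s(u,\varphi)$ as $\int_{\R{}}(-\Delta)^{s/2}u\,(-\Delta)^{s/2}\varphi\,dx$ (up to the constant) and then apply Plancherel; both steps presuppose $u\in W^{s,2}(\R{})=H^{s,2}(\R{})$, i.e. $u\in L^2(\R{})$ with finite full Gagliardo seminorm, so that $(-\Delta)^{s/2}u$ is an $L^2$ function. This does \emph{not} follow from the hypothesis \eqref{intg}, which only controls the seminorm on $I\times\R{}$: the exterior datum $g$ may fail to lie in $L^2(\R{})$ and may have infinite seminorm on $I^c\times I^c$ (e.g.\ $g$ smooth and growing like $|y|^{\alpha}$ with $0<\alpha<s$ satisfies \eqref{intg}). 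The failure is not hypothetical for this paper: in Lemma \ref{lemmagreen3} one takes $g_x=F_s(x-\cdot)$ off $I$, which decays like $|y|^{2s-1}$, so for $s\in[\tfrac14,\tfrac12)$ the solution $u=H_s(x,\cdot)$ coincides outside $I$ with a function that is not in $L^2(\R{})$, and your Plancherel step cannot be applied to it. The correct route (under just \eqref{intg}) is to prove the identity $\int_{\R{}}u\,(-\Delta)^s\varphi\,dx=\tfrac{C_s}{2}\M{B}_s(u,\varphi)$ directly from the principal-value formula of Proposition \ref{lapint}: for fixed $\ve>0$ symmetrize the truncated double integral $\int_{\R{}}u(x)\int_{\{|x-y|>\ve\}}\frac{\varphi(x)-\varphi(y)}{|x-y|^{1+2s}}\,dy\,dx$ in $x$ and $y$ (Fubini is legitimate off the diagonal), and let $\ve\to0$ using dominated convergence; the dominating function $\frac{|u(x)-u(y)|\,|\varphi(x)-\varphi(y)|}{|x-y|^{1+2s}}$ is integrable by Cauchy--Schwarz because $\varphi(x)-\varphi(y)$ vanishes unless $x$ or $y$ lies in $\mathrm{supp}\,\varphi\Subset I$, and there one only needs $\int_I\int_{\R{}}\frac{(u(x)-u(y))^2}{|x-y|^{1+2s}}\,dy\,dx<\infty$, which is exactly what \eqref{intg} and $w\in\tilde H^{s,2}(I)$ give, together with the $O(|x|^{-1-2s})$ decay of the truncated integrals of $\varphi$. (Alternatively, split $u=w+g$ and keep your Plancherel argument for $w$ only, treating $g$ by the symmetrization above.) With that replacement, and the routine reconciliation of the two constants in Propositions \ref{HW} and \ref{lapint}, the proof is complete.
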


The following version of the maximum principle is a special case of Theorem 4.1 in \cite{FKV}. 

\begin{prop}\label{maxprinc2} Let $u\in \tilde H^{s,2}(I)+g$ solve \eqref{uvar} for some $f\in L^2(I)$ with $f\ge 0$ and $g$ satisfying \eqref{intg} and $g\ge 0$ in $I^c$. Then $u\ge 0$.
\end{prop}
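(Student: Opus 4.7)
The plan is to test the bilinear formulation \eqref{uvar} against $v=-u^{-}$, where $u^{-}:=\max(-u,0)$, and conclude via a standard nonlocal Kato-type pointwise inequality.

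First I would check that $v:=-u^{-}$ is an admissible test function, i.e.\ that $u^{-}\in \tilde H^{s,2}(I)$. Since by hypothesis $u=g\ge 0$ on $\R{}\setminus I$, the negative part satisfies $u^{-}\equiv 0$ on $\R{}\setminus I$, giving the support condition. The Lipschitz inequality $|a^{-}-b^{-}|\le |a-b|$ for $a,b\in\R{}$ yields the pointwise bound $|u^{-}(x)-u^{-}(y)|\le |u(x)-u(y)|$, whence $[u^{-}]_{W^{s,2}(\R{})}\le [u]_{W^{s,2}(\R{})}$. Writing $u=g+(u-g)$ with $u-g\in \tilde H^{s,2}(I)$ and using \eqref{intg} together with Proposition \ref{HW}, the right-hand side is finite; the $L^{2}$-integrability of $u^{-}$ (which is compactly supported in $\bar I$) follows from the same splitting and from $u-g\in L^{2}(\R{})$.

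Next I would establish the algebraic inequality
\[
(a-b)(a^{-}-b^{-})\le -(a^{-}-b^{-})^{2}\quad\text{for all }a,b\in\R{}.
\]
Writing $a=a^{+}-a^{-}$, $b=b^{+}-b^{-}$ and expanding, this reduces to $(a^{+}-b^{+})(a^{-}-b^{-})\le 0$, which is immediate by a four-case analysis on the signs of $a,b$ (in each case one of the two factors vanishes, or both have opposite signs). Applying this with $a=u(x)$, $b=u(y)$ and integrating against the kernel $|x-y|^{-(1+2s)}$ yields
\[
\M{B}_{s}(u,u^{-})\le -\M{B}_{s}(u^{-},u^{-})=-[u^{-}]_{W^{s,2}(\R{})}^{2}.
\]

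Finally, plugging $v=-u^{-}\in \tilde H^{s,2}(I)$ into \eqref{uvar} and using $f\ge 0$, $u^{-}\ge 0$ gives
\[
[u^{-}]_{W^{s,2}(\R{})}^{2}\le -\M{B}_{s}(u,u^{-})=\M{B}_{s}(u,-u^{-})=-\int_{I} f\,u^{-}\,dx\le 0.
\]
Hence $[u^{-}]_{W^{s,2}(\R{})}=0$, so $u^{-}$ is (a.e.) constant; since it vanishes on $I^{c}$, we conclude $u^{-}\equiv 0$, i.e.\ $u\ge 0$.

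The main obstacle is the first step: showing rigorously that the truncation $u^{-}$ lies in $\tilde H^{s,2}(I)$ despite $u$ itself only being in $\tilde H^{s,2}(I)+g$, where $g$ is merely assumed to satisfy the weak integrability condition \eqref{intg} rather than to belong globally to $H^{s,2}(\R{})$. Once the pointwise Lipschitz bound on $t\mapsto t^{-}$ is exploited to transfer the seminorm estimate from $u$ to $u^{-}$, and the support condition is used to discard the contribution of $g$ outside $I$, the rest of the argument is purely algebraic.
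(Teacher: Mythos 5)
Your proof is correct and follows essentially the same route as the paper: test \eqref{uvar} against (the negative of) the negative part $u^-\in\tilde H^{s,2}(I)$, which is admissible precisely because $u=g\ge 0$ on $I^c$ forces $u^-$ to vanish outside $I$, and show that the resulting quadratic form forces $[u^-]_{W^{s,2}(\R{})}=0$. The only cosmetic difference is that you invoke the pointwise Kato-type inequality $(a-b)(a^- - b^-)\le -(a^- - b^-)^2$ before integrating, whereas the paper expands $u=u^+ + u^-$, uses $u^+u^-\equiv 0$ together with the symmetry of the kernel to write $\M{B}_s(u,u^-)$ as $[u^-]_{W^{s,2}(\R{})}^2$ plus a manifestly nonnegative cross term; these are two phrasings of the same computation.
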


\begin{proof} From Proposition \ref{HW} it easily follows $u^-:= \min\left\{u,0\right\} \in \tilde H^{s,2}(I)$. Then according to \eqref{uvar} we have
\[\begin{split}
0\ge \M{B}_s(u,u^-)&=\int_{\R{}}\int_{\R{}} \frac{(u^+(x)+u^-(x)-u^+(y)-u^-(y))(u^-(x)-u^-(y))}{|x-y|^{1+2s}}dxdy\\
&=\int_{\R{}}\int_{\R{}} \frac{(u^-(x)-u^-(y))^2}{|x-y|^{1+2s}}dxdy-2\int_{\R{}}\int_{\R{}}\frac{u^+(x)u^-(y)}{|x-y|^{1+2s}}dxdy
\end{split}\]
where we used that $u^+u^-=0$. Since the second term in the last equality is non-negative, it follows at once that $u^-\equiv 0$, hence $u\ge 0$.
\end{proof}

\begin{prop}\label{trmbordo}
Let $u\in \tilde H^{s,2}(I)$ be as in Theorem \ref{trmexist} (with $g=0$), where we further assume $f\in L^\infty(I)$.  Then
$$|u(x)|\le C\|f\|_{L^\infty(I)}(\mathrm{dist}(x,\de I))^s$$
for every $x\in I$. In particular $u$ is bounded in $I$ and continuous at $\de I$.
\end{prop}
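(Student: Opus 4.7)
By translation and scaling it is enough to prove the estimate for $I=(-1,1)$; the scaling $u_\lambda(x):=u(\lambda x)$ satisfies $(-\Delta)^s u_\lambda = \lambda^{2s} f(\lambda x)$, so the $L^\infty$-norm and the distance factor scale consistently.

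The plan is to construct an explicit barrier. Consider the function
$$w(x):=(1-x^2)_+^s,\qquad x\in\R{}.$$
A classical computation (see e.g. Getoor or Dyda) yields the identity $(-\Delta)^s w\equiv \kappa_s$ in $(-1,1)$ for some explicit constant $\kappa_s>0$, together with $w\equiv 0$ on $I^c$ and $w\in \tilde H^{s,2}(I)$ (since $w$ is H\"older of order $s$ with compact support, \eqref{defWsp} is easily checked and Proposition \ref{HW} applies). In particular, up to a positive multiplicative constant (absorbing the factor $C_s/2$ appearing in \eqref{uweak}), $w$ is the unique solution in $\tilde H^{s,2}(I)$ of the variational problem \eqref{uvar} with right-hand side the constant $\kappa_s$.

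Now set $M:=\|f\|_{L^\infty(I)}$ and define
$$v:=\frac{M}{\kappa_s}\,w\in \tilde H^{s,2}(I),$$
so that (after absorbing the constant $C_s/2$) $v$ solves \eqref{uvar} with right-hand side $M$ on $I$ and $v\equiv 0$ on $I^c$. Applying the maximum principle, Proposition \ref{maxprinc2}, to $v\pm u\in \tilde H^{s,2}(I)$ (which solves \eqref{uvar} with right-hand side $M\pm f\geq 0$ in $I$, and vanishes outside), we obtain
$$|u(x)|\leq v(x)=\frac{M}{\kappa_s}(1-x^2)^s\leq \frac{2^s M}{\kappa_s}(1-|x|)^s=C\,M\,(\mathrm{dist}(x,\de I))^s,$$
for every $x\in I$, which is the claimed estimate. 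The continuity of $u$ at $\de I$ is immediate since the right-hand side vanishes there, and the $L^\infty$ bound on $I$ follows by taking $x$ at the center.

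The main technical obstacle is establishing the identity $(-\Delta)^sw\equiv\kappa_s$ in $(-1,1)$ for $w(x)=(1-x^2)^s_+$; the remainder of the argument is just a comparison with this explicit barrier via the maximum principle already proved. If one wishes to avoid quoting the explicit formula, an alternative is to construct the barrier abstractly as the unique solution $v\in\tilde H^{s,2}(I)$ of \eqref{uvar} with $f\equiv M$ and then show directly that such $v$ satisfies $v(x)\leq C(\mathrm{dist}(x,\de I))^s$; in dimension one this last estimate can be obtained by applying Proposition \ref{lapint} to test the equation at points close to $\de I$ and exploiting symmetry, but using the explicit form of $w$ is considerably more efficient.
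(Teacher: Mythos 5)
Your proof is correct and follows essentially the same route as the paper: both use the explicit barrier $w(x)=(1-x^2)_+^s$ with the classical identity $(-\Delta)^s w=\mathrm{const}$ in $(-1,1)$ (quoted from Getoor), compare $\pm u$ with a multiple of $w$ via the maximum principle of Proposition \ref{maxprinc2}, and conclude from $w(x)\le 2^s(\mathrm{dist}(x,\de I))^s$.
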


\begin{proof} This proof is inspired from \cite{XR}, where a much stronger result is proven, i.e. $u/(\mathrm{dist}(\cdot, \de I))^s\in C^\alpha (\bar I)$ for some $\alpha>0$.

To prove the proposition we assume as usual that $I=(-1,1)$ and recall that
\[
w(x):=\left\{
\begin{array}{ll}
(1-|x|^2)^s &\text{for }x\in (-1,1)\\
0 &\text{for }|x|\ge 1
\end{array}\right.
\]
belongs to $ \tilde H^{s,2}(I)$ and solves $(-\Delta)^s w= \gamma_s$ for a positive constant $\gamma_s$, in the sense of Theorem \ref{trmexist}, i.e. \eqref{uvar} holds with $u=w$ and $f\equiv\gamma_s$ (see e.g. \cite{get}). Then
$$-\frac{(-\Delta)^s w}{\gamma_s}\le \frac{(-\Delta)^s u}{\|f\|_{L^\infty(I)}}\le \frac{(-\Delta)^s w}{\gamma_s}$$
and Proposition \ref{maxprinc2} gives at once
$$-\frac{\|f\|_{L^\infty(I)}}{\gamma_s} w\le u\le  \frac{\|f\|_{L^\infty(I)}}{\gamma_s}w \quad \text{in }I.$$
We conclude noticing that $0\le w(x)\le 2^s (\mathrm{dist}(x,\de I))^s$.
\end{proof}

The following density result is known for an arbitrary domain in $\R{n}$. On the other hand, its proof is quite complex in such a generality, hence we provide a short elementary proof which fits the case of an interval.

\begin{lemma}\label{lemmadens} 
For $s\in (0,1)$ and $p\in [1,\infty)$ the sets $C^\infty_c(I)$ ($I\Subset\R{}$ is a bounded interval) is dense in $\tilde H^{s,p}(I)$.
\end{lemma}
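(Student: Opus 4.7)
The plan is to carry out the standard two-step density argument: first approximate by functions whose support is compactly contained in $I$, then mollify.

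\medskip

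\emph{Step 1 (compactify the support).} Up to translating and rescaling assume $I=(-1,1)$. Given $u\in\tilde H^{s,p}(I)$ and $\lambda>1$, set
$$u_\lambda(x):=u(\lambda x),$$
so that $\mathrm{supp}(u_\lambda)\subset[-1/\lambda,1/\lambda]\Subset I$. By the standard continuity of dilations on $L^p(\R{})$, $u_\lambda\to u$ in $L^p(\R{})$ as $\lambda\to 1^+$. From the Fourier definition of $(-\Delta)^{s/2}$ and duality against Schwartz functions one gets the scaling identity
$$(-\Delta)^{s/2}u_\lambda(x)=\lambda^{s}\bigl((-\Delta)^{s/2}u\bigr)(\lambda x),$$
valid for $u\in\tilde H^{s,p}(I)$ since the right-hand side lies in $L^p$ whenever $(-\Delta)^{s/2}u$ does. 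Applying again continuity of dilations (now to the function $(-\Delta)^{s/2}u\in L^p(\R{})$) we get $(-\Delta)^{s/2}u_\lambda\to(-\Delta)^{s/2}u$ in $L^p(\R{})$ as $\lambda\to 1^+$. Hence $u_\lambda\to u$ in $\tilde H^{s,p}(I)$.

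\medskip

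\emph{Step 2 (mollify).} Given $v\in\tilde H^{s,p}(I)$ with $\mathrm{supp}(v)\subset K\Subset I$, let $(\rho_\varepsilon)_{\varepsilon>0}$ be a standard nonnegative mollifier supported in $(-\varepsilon,\varepsilon)$ and define $v_\varepsilon:=\rho_\varepsilon*v$. For $\varepsilon<\mathrm{dist}(K,\de I)$ we have $v_\varepsilon\in C^\infty_c(I)$. Classical properties of mollifiers yield $v_\varepsilon\to v$ in $L^p(\R{})$. Moreover, by the Fourier definition, convolution commutes with $(-\Delta)^{s/2}$,
$$(-\Delta)^{s/2}v_\varepsilon=\rho_\varepsilon*(-\Delta)^{s/2}v,$$
and a second application of the mollifier theorem gives $(-\Delta)^{s/2}v_\varepsilon\to(-\Delta)^{s/2}v$ in $L^p(\R{})$. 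Thus $v_\varepsilon\to v$ in $\tilde H^{s,p}(I)$.

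\medskip

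A diagonal argument combining Step 1 and Step 2 yields a sequence in $C^\infty_c(I)$ converging to an arbitrary $u\in\tilde H^{s,p}(I)$.

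\medskip

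\emph{Main technical point.} The only subtlety lies in justifying, for $u$ merely in $\tilde H^{s,p}(I)$ (so that $(-\Delta)^{s/2}u$ is a priori only a tempered distribution which happens to lie in $L^p$), the scaling formula and the commutation with convolution. Both follow by pairing against Schwartz test functions and moving the operations onto the test function, where they are classical, then using $u,(-\Delta)^{s/2}u\in L^p(\R{})$ and Fubini to interpret the resulting $L^p$ identities.
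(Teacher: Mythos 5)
Your proposal is correct and follows essentially the same two-step route as the paper: dilate to compactify the support inside $I$, using the scaling identity $(-\Delta)^{s/2}u_\lambda(x)=\lambda^{s}((-\Delta)^{s/2}u)(\lambda x)$ together with continuity of dilations on $L^p$, and then mollify, using that convolution commutes with $(-\Delta)^{s/2}$. The paper merely spells out the $L^p$-continuity-of-dilations approximation argument that you invoke as a standard fact; the structure and the justifications are the same.
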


\begin{proof}
 Without loss of generality we consider $I=(-1,1)$. Given $u\in \tilde H^{s,p}(I)$ and $\lambda>1$, set $u_\lambda(x):=u(\lambda x)$. We claim that $u_\lambda\to u$ in $\tilde H^{s,p}(I)$ as $\lambda\to 1$. Indeed
\[\begin{split}
\|u_\lambda-u\|_{H^{s,p}(\R{})}^p&
 =\|u-u_\lambda\|_{L^p(\R{})}^p+ \|\lambda^s f_\lambda -f\|_{L^p(\R{})}^p,
\end{split}\]
where $f=(-\Delta)^\frac s2 u$ and $f_\lambda(x):=f(\lambda x)$. Since $f\in L^p(\R{})$ it follows that $\|\lambda^s f_\lambda -f\|_{L^p(\R{})}\to 0$ as $\lambda\to 1$, since this is obviously  true for $f\in C^0(\R{})$ with compact support, and for a general $f\in L^p(\R{})$ it can be proven by approximation in the following standard way. Given $\ve>0$ choose $f_\ve\in C^0(\R{})$ with compact support and $\|f_\ve-f\|_{L^p(\R{})}\le \ve$. Then by the Minkowski inequality
\[
\begin{split}
\|\lambda^s f_\lambda-f\|_{L^p(\R{})}&\le \|\lambda^s f_\lambda-\lambda^s f_{\ve,\lambda}\|_{L^p(\R{})}+ \|\lambda^s f_{\ve,\lambda}-f_\ve \|_{L^p(\R{})} + \| f_\ve- f\|_{L^p(\R{})}\\
&\le \ve \lambda^{s-\frac1p} +\|\lambda^s f_{\ve,\lambda}-f_\ve \|_{L^p(\R{})} +\ve,
\end{split}
\]
and it suffices to let $\lambda\to 1$ and $\ve \to 0$. Similarly $\|u-u_\lambda\|_{L^p(\R{})}^p\to 0$ as $\lambda\to 1$.

Now given $\delta >0$ fix $\lambda>1$ such that $\|u_\lambda -u\|_{H^{s,p}(\R{})}<\delta$ and let $\rho$ be a mollifying kernel, i.e. a smooth non-negative function supported in $I$ with $\int_I \rho dx=1$. Also set $\rho_\ve(x):=\ve^{-1}\rho(\ve^{-1}x)$. Then noticing that $u_\lambda$ is supported in $[-\lambda^{-1},\lambda^{-1}]\Subset I$, for $\ve>0$ sufficiently small we have that $\rho_\ve * u_\lambda\in C^\infty_c(I)$. To conclude the proof notice that
$$\rho_\ve * u_\lambda \to u_\lambda \text{ in }\tilde H^{s,p}(I) \text{ as }\ve\to 0,$$
since
$$(-\Delta)^\frac s2 (\rho_\ve * u_\lambda)= \rho_\ve * (-\Delta)^\frac s2 u_\lambda \to (-\Delta)^\frac s2 u_\lambda \quad \text{in } L^p(\R{})\text{ as }\ve\to 0,$$
and use the Minkowski inequality to conclude that $\rho_\ve * u_\lambda \to u$ in $\tilde H^{s,p}(I)$ as $\ve\to 0$ and $\lambda\downarrow 1$.
\end{proof}

\begin{prop}\label{maxprinc} Let $I\Subset \R{}$ be a bounded interval and $s\in (0,1)$. Let $u\in L_s(\R{})$ satisfy $(-\Delta)^s u\ge 0$ in $I$ (i.e. $\langle u,(-\Delta)^s\varphi\rangle\ge 0$ for every $\varphi\in C^\infty_c(I)$ with $\varphi\ge 0$),  $u\ge 0$ in $I^c$ and
\begin{equation}\label{usemic}
\liminf_{x\to \de I}u(x)\ge 0.
\end{equation}
Then $u\ge 0$ in $I$. More precisely, either $u>0$ in $I$, or $u\equiv 0$ in $\R{}$.
\end{prop}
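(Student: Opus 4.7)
The argument splits into the weak statement ($u \ge 0$ in $I$) and its strong refinement. Both rest on the following pointwise computation: at an interior point $x_0 \in I$ where $u(x_0) < 0$ and $x_0$ minimizes $u$ on $I$, the integral representation from Proposition~\ref{lapint}
\begin{equation*}
(-\Delta)^s u(x_0) = C_s\, \mathrm{P.V.}\!\int_{\R{}} \frac{u(x_0) - u(y)}{|x_0 - y|^{1+2s}}\,dy
\end{equation*}
has a non-positive integrand on $I$ (by minimality) and a strictly negative integrand on $I^c$ (since $u(y) \ge 0 > u(x_0)$ there), producing $(-\Delta)^s u(x_0) < 0$ and contradicting $(-\Delta)^s u \ge 0$.

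To make this rigorous I would regularize $u$. By Schwartz's theorem the distributional inequality $(-\Delta)^s u \ge 0$ in $I$ means that $(-\Delta)^s u$ agrees on $I$ with a non-negative Radon measure $\mu$. Let $\rho_\varepsilon$ be a standard mollifier with $\mathrm{supp}(\rho_\varepsilon)\subset(-\varepsilon,\varepsilon)$ and set $u_\varepsilon := u * \rho_\varepsilon$. Then $u_\varepsilon$ is smooth and belongs to $L_s(\R{})$, and on the shrunken interval $I_\varepsilon := \{x \in I : \dist(x,\partial I) > \varepsilon\}$ one has $(-\Delta)^s u_\varepsilon = \mu * \rho_\varepsilon \ge 0$ pointwise, since the test function $\rho_\varepsilon(x - \cdot)$ is compactly supported in $I$ for $x \in I_\varepsilon$. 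Arguing by contradiction, if $\inf_I u < 0$ then for all $\varepsilon$ small the infimum of $u_\varepsilon$ on $\bar I_\varepsilon$ would also be negative, attained at some $x_\varepsilon \in I_\varepsilon$; the hypotheses $u \ge 0$ in $I^c$ and $\liminf_{x \to \partial I} u(x) \ge 0$ ensure $x_\varepsilon$ stays bounded away from $\partial I$ uniformly in $\varepsilon$. Applying the pointwise computation above to $u_\varepsilon$ at $x_\varepsilon$ then gives $(-\Delta)^s u_\varepsilon(x_\varepsilon) < 0$, a contradiction. Letting $\varepsilon \downarrow 0$ along a subsequence for which $u_\varepsilon \to u$ a.e.\ yields $u \ge 0$ a.e.\ in $I$.

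For the strong statement, suppose $u \ge 0$ on $\R{}$ and $u(x_0) = 0$ at some $x_0 \in I$. The same pointwise formula, applied to the smooth $u_\varepsilon$ near a point where $u_\varepsilon$ is arbitrarily small, collapses to
\begin{equation*}
0 \le (-\Delta)^s u(x_0) = -C_s\!\int_{\R{}} \frac{u(y)}{|x_0-y|^{1+2s}}\,dy \le 0,
\end{equation*}
so the integral vanishes and $u \equiv 0$ a.e.\ on $\R{}$.

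The main obstacle is the regularity gap. Making the mollification clean requires (i) using the $\liminf$ hypothesis quantitatively: given $\delta > 0$ one can choose $\eta$ so that $u > -\delta$ outside $I_\eta$, whence $u_\varepsilon > -\delta$ outside $I_{\eta-\varepsilon}$, forcing the interior minimizer $x_\varepsilon$ to lie in $I_{\eta/2}$ for small $\varepsilon$; and (ii) verifying that the contribution of $I^c$ to the principal-value integral at $x_\varepsilon$ is strictly negative and bounded away from zero, which follows because $u_\varepsilon(y) \ge -\delta$ for $y$ outside a small neighborhood of $\partial I$ while $u_\varepsilon(x_\varepsilon) \le -c < 0$ uniformly. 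These are routine but slightly technical $\varepsilon$-estimates.
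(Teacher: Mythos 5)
The paper does not reprove this statement at all: it simply invokes Proposition 2.17 of Silvestre \cite{Sil} (see also the remark following the proposition concerning the lower semicontinuous representative). Your mollified argument for the \emph{weak} assertion is an acceptable self-contained substitute: $(-\Delta)^s u_\varepsilon=((-\Delta)^s u)*\rho_\varepsilon\ge 0$ pointwise on $I_\varepsilon$, the hypothesis \eqref{usemic} together with $u\ge 0$ on $I^c$ forces an interior \emph{global} minimum point $x_\varepsilon$ with $u_\varepsilon(x_\varepsilon)\le -c<0$ while $u_\varepsilon\ge-\delta$ off a neighbourhood of $I$, and Proposition \ref{lapint} applied to the smooth $u_\varepsilon$ at $x_\varepsilon$ gives a far-field contribution bounded above by $-(c-\delta)c'<0$, a contradiction. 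Note, however, that this only yields $u\ge 0$ \emph{almost everywhere} in $I$; the pointwise statement (and a fortiori the dichotomy) really concerns the lsc representative, exactly the point raised in the paper's remark.

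The genuine gap is in your proof of the strong statement. The display
\[
0 \le (-\Delta)^s u(x_0) = -C_s\int_{\R{}} \frac{u(y)}{|x_0-y|^{1+2s}}\,dy
\]
is not justified: $u$ is only assumed to lie in $L_s(\R{})$ and $(-\Delta)^s u\ge 0$ holds only distributionally, so neither the pointwise value $(-\Delta)^s u(x_0)$ nor the representation of Proposition \ref{lapint} (which requires $C^{0,\alpha}$ regularity near $x_0$ with $\alpha>2s$) is available — this is precisely the regularity gap you flag, but your $\varepsilon$-scheme repairs it only for the weak part. The sketched fix "apply the formula to $u_\varepsilon$ near a point where $u_\varepsilon$ is arbitrarily small" does not go through as stated: even if $x_0$ is a Lebesgue point with $u(x_0)=0$, so that $u_\varepsilon(x_0)\to 0$, the point $x_0$ is in general \emph{not} a minimum of $u_\varepsilon$, hence the near-field part of the principal value integral has no sign; bounding it by $u_\varepsilon(x_0)\int_{|y-x_0|<\rho}|x_0-y|^{-1-2s}dy$ diverges, and the second-order cancellation would require bounds on $u_\varepsilon''$ that blow up as $\varepsilon\to 0$. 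A correct elementary route in this distributional setting is to test the inequality directly with a bump $0\le\varphi\in C^\infty_c(I)$, $\varphi>0$ near such a Lebesgue point: since $(-\Delta)^s\varphi(y)=C_s\int\frac{-\varphi(z)}{|y-z|^{1+2s}}dz<0$ for $y\notin\mathrm{supp}(\varphi)$, the inequality $0\le\int_{\R{}}u\,(-\Delta)^s\varphi\,dx$, with $\varphi$ suitably rescaled so that the near contribution is small, forces $\int_{\R{}}\frac{u(y)}{1+|y|^{1+2s}}dy=0$ and hence $u=0$ a.e.; alternatively one can simply do as the paper does and quote Silvestre's Proposition 2.17, which works with the lsc representative and also delivers the genuinely pointwise alternative $u>0$ in $I$.
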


\begin{proof} This is a special case of Proposition 2.17 in \cite{Sil}.
\end{proof}

\begin{rmk} The statement of Proposition 2.17 in \cite{Sil} is slightly different, since it assumes $u$ to be lower-semicontinuous in $\bar I$. On the other hand, lower semicontinuity inside $I$ already follows from \cite[Prop. 2.15]{Sil}. What really matters is condition \eqref{usemic}. That an assumption of this kind (possibly weaker) is needed follows for instance from the example of Lemma 3.2.4 in \cite{Aba}.
\end{rmk}

The following way of computing the fractional Laplacian of a sufficiently regular function is often used.

\begin{prop}\label{lapint} For an open interval $J\subset\R{}$, let $s\in (0,\frac{1}{2})$ and $u\in L_s(\R{})\cap C^{0,\alpha}(J)$ for some $\alpha\in  (2s,1]$, or $s\in [\frac{1}{2},1)$ and $u\in L_s(\R{})\cap C^{1,\alpha}(J)$ for some $\alpha\in  (2s-1,1]$ . Then $((-\Delta)^s u)|_J\in C^0(J)$ and
$$(-\Delta)^su(x)=C_s P.V.\int_\R{} \frac{u(x)-u(y)}{|x-y|^{1+2s}}dy:= C_s\lim_{\ve\to 0}\int_{\R{}\setminus [x-\ve,x+\ve]}\frac{u(x)-u(y)}{|x-y|^{1+2s}}dy$$
for every $x\in J$. This means that 
$$\langle (-\Delta)^su, \varphi\rangle =C_s\int_{\R{}} \varphi(x)\,P.V.\int_\R{} \frac{u(x)-u(y)}{|x-y|^{1+2s}}dy\,dx,\quad \text{for every } \varphi\in C^\infty_c(J).$$
\end{prop}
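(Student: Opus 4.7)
The plan is to localize using a cutoff, reduce to two standard pieces, and match the pointwise principal-value expression with the distributional definition on each piece. Fix $x_{0}\in J$ and a bounded open interval $K$ with $x_{0}\in K\Subset J$; pick $\eta\in C^{\infty}_{c}(J)$ with $\eta\equiv 1$ on a neighborhood of $K$. Write $u=u_{1}+u_{2}$ with $u_{1}=\eta u$ and $u_{2}=(1-\eta)u$, so $u_{1}$ inherits the Hölder or $C^{1,\alpha}$ regularity of $u$ and is compactly supported, while $u_{2}$ vanishes identically on a neighborhood of $K$.

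First I would verify that $V(x):=C_{s}\,P.V.\!\int_{\R{}}\frac{u(x)-u(y)}{|x-y|^{1+2s}}dy$ exists and is continuous on $K$. For the near-field piece $|x-y|\le 1$: when $s<\tfrac12$ and $u\in C^{0,\alpha}$ with $\alpha>2s$, the pointwise bound $|u(x)-u(y)|\le C|x-y|^{\alpha}$ gives an integrable majorant $|x-y|^{\alpha-1-2s}$ and absolute convergence, so no PV is really needed; when $s\ge\tfrac12$ and $u\in C^{1,\alpha}$ with $\alpha>2s-1$, write $u(x)-u(y)=-u'(x)(y-x)+r(x,y)$ with $|r(x,y)|\le C|x-y|^{1+\alpha}$, observe that the odd linear term integrates to $0$ over the symmetric annuli $\{\ve<|x-y|<1\}$, and bound the remainder by the integrable $|x-y|^{\alpha-2s}$. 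The far-field piece $|x-y|>1$ is controlled uniformly in $x\in K$ by $u\in L_{s}(\R{})$. Continuity of $V$ on $K$ then follows from the dominated convergence theorem, using continuity of $u$ (resp.\ $u'$) together with the integrable majorants just obtained.

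Next I would show $V=(-\Delta)^{s}u$ on $K$ in the sense of distributions, which combined with continuity gives the pointwise identity. Since $u_{2}\equiv 0$ in a neighborhood of $K$, for any $\varphi\in C^{\infty}_{c}(K)$ Fubini applies directly to the non-singular double integral and yields
\[
\int_{\R{}}u_{2}(-\Delta)^{s}\varphi\,dx \;=\; -C_{s}\int_{K}\varphi(x)\!\int_{\R{}}\frac{u_{2}(y)}{|x-y|^{1+2s}}dy\,dx,
\]
using the Schwartz-class integral formula for $(-\Delta)^{s}\varphi$ and the decay $|(-\Delta)^{s}\varphi(x)|\le C(1+|x|)^{-1-2s}$. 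For $u_{1}$, which is compactly supported with the same regularity as $u$, I would regularize by $u_{1}^{(n)}:=u_{1}*\rho_{1/n}$: for Schwartz functions the identity $(-\Delta)^{s}\varphi=C_{s}\,P.V.\!\int\frac{\varphi(x)-\varphi(y)}{|x-y|^{1+2s}}dy$ is a direct consequence of the Fourier definition, so the pointwise PV formula for $u_{1}^{(n)}$ holds; then I would pass to the limit, noting that $u_{1}^{(n)}\to u_{1}$ in $L^{1}(\R{})$ gives $\langle (-\Delta)^{s}u_{1}^{(n)},\varphi\rangle\to\langle (-\Delta)^{s}u_{1},\varphi\rangle$, while the PV integrals of $u_{1}^{(n)}$ converge to $V_{1}$ on $K$ because the Hölder (resp.\ $C^{1,\alpha}$) seminorms of $u_{1}^{(n)}$ are uniformly bounded (mollification does not worsen them), which supplies a uniform integrable majorant and allows another dominated convergence.

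The main obstacle is the case $s\ge\tfrac12$: the integrand is not absolutely integrable at the diagonal, so one genuinely needs the symmetric cancellation of the first-order Taylor term, and this has to survive the mollification/limit argument used to match the distributional definition. This is exactly where the hypothesis $\alpha>2s-1$ is used, and it is the only non-routine estimate in the proof; once the uniform $|x-y|^{\alpha-2s}$ bound is in place for the sequence $u_{1}^{(n)}$, everything else is an assembly of dominated convergence arguments.
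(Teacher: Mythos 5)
Your proposal is correct, but it is worth pointing out that the paper does not actually prove Proposition \ref{lapint}: it simply cites \cite[Prop.\ 2.4]{Sil}. So what you have written is a self-contained alternative to that citation, and the scheme (cutoff $u=\eta u+(1-\eta)u$, direct Fubini for the far part, mollification plus the Schwartz-class principal-value formula for the compactly supported part, then dominated convergence using the uniform H\"older/$C^{1,\alpha}$ seminorms of the mollifications) is a legitimate and fairly economical way to match the paper's tempered-distribution definition \eqref{deffrlap} with the pointwise PV expression; it only invokes the standard identity $(-\Delta)^s\varphi=C_s\,P.V.\int\frac{\varphi(x)-\varphi(y)}{|x-y|^{1+2s}}dy$ for $\varphi\in\M{S}$ and Plancherel-type self-adjointness on $\M{S}$, both of which are routine consequences of the Fourier definition. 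Three small points should be made explicit to make the argument airtight: (i) since $C^{0,\alpha}(J)$ (resp.\ $C^{1,\alpha}(J)$) is a local hypothesis, you should note that $u_1=\eta u$ is \emph{globally} H\"older (resp.\ $C^{1,\alpha}$) on $\R{}$ because $u$ is bounded and uniformly H\"older on the compact set $\mathrm{supp}\,\eta\Subset J$ and $u_1$ vanishes outside it; (ii) in the case $s\ge\frac12$ the passage $V_1^{(n)}\to V_1$ needs not only the uniform bound $[u_1^{(n)}]_{C^{1,\alpha}}\le[u_1]_{C^{1,\alpha}}$ but also the (locally uniform) convergence $(u_1^{(n)})'\to u_1'$, so that the symmetric cancellation of the first-order term is performed for each $n$ and the remainder converges pointwise under the majorant $C|x-y|^{\alpha-2s}$; (iii) the Fubini step for $u_2$ should record that $\mathrm{dist}(\mathrm{supp}\,u_2,\mathrm{supp}\,\varphi)>0$ and $u_2\in L_s(\R{})$ give the absolute convergence $\int\!\!\int\frac{|u_2(y)||\varphi(x)|}{|x-y|^{1+2s}}\,dy\,dx<\infty$. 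With these details filled in, your argument proves exactly the stated proposition, including the continuity of $((-\Delta)^su)|_J$, and the hypotheses $\alpha>2s$ (resp.\ $\alpha>2s-1$) enter precisely where you say they do.
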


\begin{proof} See e.g. \cite[Prop. 2.4]{Sil}
\end{proof}

\begin{lemma}\label{phi1} Let $\varphi_1\in H=\tilde H^{\frac{1}{2},2}(I)$ be an eigenfunction corresponding to the first eigenvalue $\lambda_1(I)$ of $(-\Delta)^\frac{1}{2}$ on $I$. Then $\varphi_1>0$ a.e. on $I$ or $\varphi_1<0$ a.e. on $I$ and the corresponding eigenspace has dimension $1$.
\end{lemma}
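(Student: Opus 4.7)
The plan is to combine the variational characterization
\[
\lambda_1(I)=\inf\bigl\{\|u\|_H^2:u\in H,\ \|u\|_{L^2(I)}=1\bigr\}
\]
with the strong maximum principle of Proposition \ref{maxprinc}.

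First, I would show that this infimum is attained and that every minimizer is an eigenfunction for $\lambda_1(I)$. The embedding $H\hookrightarrow L^2(I)$ is compact (by Proposition \ref{HW} together with \cite[Theorem 7.1]{DNPV}), so any minimizing sequence admits a subsequence converging strongly in $L^2(I)$ and weakly in $H$; weak lower semicontinuity of $\|\cdot\|_H$ then yields a minimizer $\varphi_1$, and the standard Lagrange-multiplier computation produces the weak equation $(-\Delta)^{\frac12}\varphi_1=\lambda_1(I)\varphi_1$ in the sense of \eqref{eq1}.

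Next, I would show that any eigenfunction $\varphi$ for $\lambda_1(I)$ has constant sign in $I$. The key point is the inequality $\||\varphi|\|_H\le\|\varphi\|_H$, which follows from Proposition \ref{HW} and the pointwise bound $\bigl||\varphi(x)|-|\varphi(y)|\bigr|\le|\varphi(x)-\varphi(y)|$; combined with $\||\varphi|\|_{L^2}=\|\varphi\|_{L^2}$, this shows that $|\varphi|$ is again a minimizer and hence an eigenfunction, with $|\varphi|\ge 0$. A short bootstrap based on Proposition \ref{trmbordo} (starting from $\varphi\in L^2(I)$ and using the identity $\varphi=\lambda_1(I)^{-1}(-\Delta)^{\frac12}\varphi$ to upgrade integrability) gives that $|\varphi|$ is bounded on $I$ and continuous up to $\partial I$ with $|\varphi|=0$ there. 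Proposition \ref{maxprinc} then applies and leaves only the alternatives $|\varphi|\equiv 0$, excluded since $\varphi\not\equiv 0$, or $|\varphi|>0$ throughout $I$. By continuity of $\varphi$ inside $I$, this forces $\varphi$ itself to have constant sign.

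Finally, for one-dimensionality of the eigenspace, take two eigenfunctions $\varphi,\psi$ associated to $\lambda_1(I)$ and fix any $x_0\in I$. By the previous step $\psi(x_0)\ne 0$, so the combination $\chi:=\psi(x_0)\varphi-\varphi(x_0)\psi$ is an eigenfunction for $\lambda_1(I)$ with $\chi(x_0)=0$. If $\chi\not\equiv 0$, the previous step would force $\chi$ to have constant, nowhere-vanishing sign on $I$, contradicting $\chi(x_0)=0$. Hence $\chi\equiv 0$ and $\varphi,\psi$ are linearly dependent.

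The main technical point is justifying the pointwise statements (continuity up to $\partial I$, evaluation at $x_0$) used in invoking Proposition \ref{maxprinc} and in reaching the final contradiction: this is what the regularity bootstrap described above is meant to supply, and it is the only place where something beyond the variational setup and the maximum principle is needed.
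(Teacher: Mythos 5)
Your variational setup and the symmetrization step (the inequality $\||\varphi|\|_H\le\|\varphi\|_H$ via Proposition \ref{HW}, so that $|\varphi|$ is again a first eigenfunction) coincide with the paper's proof. After that you diverge: the paper never invokes the maximum principle or any regularity. It observes that equality in \eqref{|u|} already forces a minimizer to be non-negative or non-positive a.e., quotes Theorem A.1 of \cite{BF} for the a.e.\ \emph{strict} sign, and obtains one-dimensionality of the eigenspace from the purely $L^2$ fact that two fixed-sign, a.e.\ non-vanishing eigenfunctions cannot be orthogonal. Your alternative — Proposition \ref{maxprinc} applied to $|\varphi|$ (where, since $|\varphi|\ge 0$, condition \eqref{usemic} is automatic and no boundary continuity is actually needed), followed by the classical linear-combination trick $\chi=\psi(x_0)\varphi-\varphi(x_0)\psi$ — is a legitimate strategy, but it makes the conclusion depend on pointwise information about eigenfunctions inside $I$.

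That is where there is a genuine gap as written. Proposition \ref{trmbordo} requires the right-hand side $f=\lambda_1\varphi|_I$ to lie in $L^\infty(I)$, which is essentially the bound you are trying to produce, so the bootstrap is circular at its first step; and even granting $f\in L^\infty(I)$, that proposition gives only boundedness and continuity \emph{at} $\partial I$, not the interior continuity you need both to pass from ``$|\varphi|>0$ in $I$'' to ``$\varphi$ has constant sign'' and to evaluate eigenfunctions at $x_0$ in the simplicity argument. Nothing else in the paper supplies this: the Green-function machinery (Lemmas \ref{green2} and \ref{lemmagreen3}) is developed only for $s\in(0,\tfrac12)$, so interior regularity at $s=\tfrac12$ would have to be imported from external results (e.g.\ Silvestre or Ros-Oton--Serra), which is standard but not a one-line remark. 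Note that the constant-sign step can be repaired with no regularity at all, exactly as in the paper, by using the equality case of \eqref{|u|} (if $\varphi$ had both signs on sets of positive measure the inequality would be strict, contradicting minimality of both $\varphi$ and $|\varphi|$); the evaluation at $x_0$, however, cannot be so repaired, and there the paper's orthogonality argument is the cleaner route.
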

\begin{proof}
Recall that the first eigenvalue $\lambda_{1}(I)$ can be characterised by minimizing the following functional $$F(u)=\frac{\|u\|_{H}^{2}}{\int_{I} u^{2} dx},$$
that is,
$$\lambda_{1}(I)=\min_{u\in H\setminus \{0\} } F(u).$$
On the other hand using Proposition \ref{HW} we get that for any $u\in H$
\begin{equation}\label{|u|}
\|u\|_{H}^{2}=\int_{\mathbb{R}}\int_{\mathbb{R}} \frac{(u(x)-u(y))^{2}}{(x-y)^{2}}dxdy\geq \int_{\mathbb{R}}\int_{\mathbb{R}} \frac{(|u(x)|-|u(y)|)^{2}}{(x-y)^{2}}dxdy=\| |u| \|_H^{2},
\end{equation}
hence, $F(|u|)\leq F(u)$, and $F(u)=F(|u|)$ if and only if $u$ is non-negative or non-positive. Therefore if $F(\varphi_{1})=\lambda_{1}$, then $\varphi_1$ does not change sign. Moreover Theorem $A.1$ in \cite{BF} gives us $\varphi_1 >0$ or $\varphi_1<0$ almost everywhere in $I$. Any other eigenfunction corresponding to $\lambda_1$ must also have fixed sign, hence it cannot be orthogonal to $\varphi_1$, therefore it is a multiple of $\varphi_1$.
\end{proof}

\begin{lemma}\label{lemmafkL} Consider a sequence $(f_k)\subset L^1(I)$ with $f_k\to f$ a.e. and with
\begin{equation}\label{fkL}
\int_{\{f_k>L\}} f_k dx =o(1),
\end{equation}
with $o(1)\to 0$ as $L\to\infty$ uniformly with respect to $k$. Then $f_k\to f$ in $L^1(I)$.
\end{lemma}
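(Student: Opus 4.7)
\medskip

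\noindent\textbf{Proof proposal.} The plan is to treat this as a finite-measure version of Vitali's convergence theorem: the hypothesis is essentially a uniform integrability condition, and once combined with the a.e.\ convergence on the bounded interval $I$ it should yield $L^1$ convergence via a standard truncation argument. Throughout I will proceed as if $f_k\ge 0$ (which is the natural setting where the assumption $\int_{\{f_k>L\}}f_k\,dx = o(1)$ controls integrability, and which matches every use of this lemma elsewhere in the paper); in general one simply applies the same argument to $f_k^+$.

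First I would establish that $f\in L^1(I)$. Choosing $L_0$ so large that $\int_{\{f_k>L_0\}}f_k\,dx\le 1$ for every $k$, we have
\[
\int_I f_k\,dx \le L_0|I| + \int_{\{f_k>L_0\}}f_k\,dx \le L_0|I|+1,
\]
and Fatou's lemma together with $f_k\to f$ a.e.\ gives $\int_I f\,dx<\infty$, so in particular $\int_{\{f>L\}}(f-L)\,dx\to 0$ as $L\to\infty$ by dominated convergence.

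Next, for each $L>0$ introduce the truncations $g_k^L:=\min(f_k,L)$ and $g^L:=\min(f,L)$. Clearly $g_k^L\to g^L$ a.e., $|g_k^L|\le L$, and $|I|<\infty$, so the bounded convergence theorem yields
\[
\lim_{k\to\infty}\int_I |g_k^L-g^L|\,dx = 0 \quad\text{for every fixed }L>0.
\]
The triangle inequality then gives
\[
\int_I |f_k-f|\,dx \le \int_I (f_k-g_k^L)\,dx + \int_I |g_k^L-g^L|\,dx + \int_I (f-g^L)\,dx.
\]
The first term equals $\int_{\{f_k>L\}}(f_k-L)\,dx \le \int_{\{f_k>L\}}f_k\,dx$, which tends to $0$ as $L\to\infty$ uniformly in $k$ by hypothesis; the third tends to $0$ as $L\to\infty$ by the previous paragraph; and the middle one tends to $0$ as $k\to\infty$ for every fixed $L$. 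Given $\varepsilon>0$, I would first fix $L$ so large that the first and third terms are each below $\varepsilon/3$ for all $k$, and then pick $k_0$ so large that the middle term is below $\varepsilon/3$ for $k\ge k_0$. This yields $\int_I |f_k-f|\,dx<\varepsilon$ for $k\ge k_0$, proving the $L^1$ convergence.

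There is no real obstacle here: the whole proof is essentially a one-paragraph standard manipulation. The only subtle point is recognizing that the hypothesis is precisely the finite-measure form of uniform integrability needed to upgrade a.e.\ convergence to $L^1$ convergence, and that the sign assumption $f_k\ge 0$ (implicit in every application of the lemma) is what makes the one-sided tail bound sufficient.
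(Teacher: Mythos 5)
Your proof is correct and follows essentially the same route as the paper: truncate at level $L$, apply the dominated (bounded) convergence theorem to $\min\{f_k,L\}\to\min\{f,L\}$ for fixed $L$, and conclude with the triangle inequality together with the uniform tail bound \eqref{fkL}. You merely spell out the details (including $f\in L^1$ via Fatou and the implicit non-negativity used in all applications) that the paper's one-line proof leaves to the reader.
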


\begin{proof}
From the dominated convergence theorem
$$\min\{f_k, L\}\to \min\{f, L\}\quad \text{in }L^1(I),$$
and the convergence of $f_k$ to $f$ in $L^1$ follows at once from \eqref{fkL} and the triangle inequality.
\end{proof}

\paragraph{Acknowledgements}

We would like to thank the anonymous referees for the very careful reading and for the very useful suggestions.

\end{document}